\newtheorem{main}{Theorem}
\newtheorem{theorem}{Theorem}[section]
\newtheorem{lem}[theorem]{Lemma}
\newtheorem{prop}[theorem]{Proposition}
\newtheorem{Conjecture}[theorem]{Conjecture}
\newtheorem{cor}[theorem]{Corollary}
\theoremstyle{definition}
\newtheorem{definition}[theorem]{Definition}
\newtheorem{notation}[theorem]{Notation}
\newtheorem{Remark}[theorem]{Remark}
\newtheorem{examples}[theorem]{Examples}
\def\ca{\curvearrowright}
\def\ra{\rightarrow}
\def\e{\epsilon}
\def\la{\lambda}
\def\La{\Lambda}
\def\ve{\varepsilon}
\def\de{\delta}
\def\g{\gamma}
\def\G{\Gamma}
\def\Cal{\mathcal}
\def\wh{\widehat}
\def\hla{\hat\la}
\numberwithin{equation}{section}
\def\mod{\rm Mod}
\def\mod{\mathrm{Mod}}
\def\pmod{\mathrm{PMod}}
\def\cali{\mathcal{I}}
\def\calk{\mathcal{K}}
\begin{document}

\title[Von Neumann algebras arising from surface braid groups]{Primeness results for von Neumann algebras associated with surface braid groups}
\author[I. Chifan]{Ionut Chifan}
\address{Department of Mathematics, The University of Iowa, 14 MacLean Hall, IA  
52242, USA and IMAR, Bucharest, Romania}
\email{ionut-chifan@uiowa.edu}
\thanks{First author was supported in part by the Old Gold Fellowship, University of Iowa and by the NSF Grants \#1263982 and \#1301370.}
\author[Y. Kida]{Yoshikata Kida}
%\address{Department of Mathematics, Kyoto University, Kyoto 606-8502, Japan}
\address{Graduate School of Mathematical Sciences, The University of Tokyo, Komaba, Tokyo 153-8914, Japan}
\email{kida@ms.u-tokyo.ac.jp}
\thanks{Second author was supported by JSPS Grant-in-Aid for Young Scientists (B), No.25800063.}
\author[S. Pant]{Sujan Pant}
\address{Department of Mathematics, The University of Iowa, 14 MacLean Hall, IA
52242, USA}
\email{sujan-pant@uiowa.edu}
%\subjclass[2010]{22D40; 20F65; 43A15}
\date{\today }
\dedicatory{}
\keywords{}

\begin{abstract} In this paper we introduce a new class of non-amenable groups denoted by ${\bf NC}_1 \cap {\bf Quot}(\mathcal C_{rss})$ which give rise to \emph{prime} von Neumann algebras. This means that for every $\G\in {\bf NC}_1 \cap {\bf Quot}(\mathcal C_{rss})$ its group  von Neumann algebra $L(\G)$ cannot be decomposed as a tensor product of diffuse von Neumann algebras. We show ${\bf NC}_1 \cap {\bf Quot}(\mathcal C_{rss})$ is fairly large as it contains many examples of groups intensively studied in various areas of mathematics, notably: all infinite central quotients of pure surface braid groups; all mapping class groups of (punctured) surfaces of genus $0,1,2$; most Torelli groups and Johnson kernels of (punctured) surfaces of genus $0,1,2$; and, all groups hyperbolic relative to finite families of residually finite, exact, infinite, proper subgroups. 

\end{abstract}

\maketitle

%%%%%%%%%%%%%%%%%%%%%%%%%%%%%%%%%%%%%%%%%%%%%%%%%%%%%%%%%%%%%%%%%%%%%%%%%%%%%
%%%%%%%%%%%%%%%%%%%%%%%%%%%%%%%%%%%%%%%%%%%%%%%%%%%%%%%%%%%%%%%%%%%%%%%%%%%%%
%%%%%%%%%%%%%%%%%%%%%%%%%%  INTRODUCTION %%%%%%%%%%%%%%%%%%%%%%%%%%%%%%%%%%%%%%%%%%
%%%%%%%%%%%%%%%%%%%%%%%%%%%%%%%%%%%%%%%%%%%%%%%%%%%%%%%%%%%%%%%%%%%%%%%%%%%%%%
%%%%%%%%%%%%%%%%%%%%%%%%%%%%%%%%%%%%%%%%%%%%%%%%%%%%%%%%%%%%%%%%%%%%%%%%%%%%%

\section{Introduction}

In pioneering work \cite{Po81} Sorin Popa discovered that the (non-separable) factors $L(\mathbb F)$ arising from uncountably generated free groups $\mathbb F$ are \emph{prime}, i.e., $L(\mathbb F)$ cannot be decomposed as a tensor product of diffuse factors. Much later, using Voiculescu's influential free probability theory \cite{VDN92,V94,V96}, Liming Ge  was able to show primeness for all factors associated with countably generated, non-abelian free groups as well, \cite{G98}. In the context of free probability other examples of prime factors were subsequently unveiled \cite{Shl00,Shl04,Ju07}.

By developing a different perspective, largely based on $C^*$-algebraic methods, Narutaka Ozawa obtained a far-reaching generalization of these results by showing that all  factors $L(\G)$ associated with non-elementary hyperbolic groups $\G$ are in fact \emph{solid}, i.e., for every diffuse, amenable subalgebra $A\subset L(\G)$ its relative commutant $A'\cap L(\G)$ is again amenable; in particular, it follows that $L(\G)$ is prime. Notice that Ozawa's solidity result holds for all factors associated with bi-exact groups \cite{Oz03,Oz05,BO08}.  

In the early 2000's  Popa introduced a completely new conceptual framework to study von Neumann algebras, now termed \emph{Popa's deformation/rigidity theory}. This novel approach has generated spectacular progress over the last decade , leading to complete solutions to many longstanding open problems in the classification of von Neumann algebras and equivalence relations arising from group actions, \cite{Po01,Po03,Po04,IPP05,Po06,Po06b}. The theory develops a powerful technical apparatus designed to incorporate meaningful cohomological, geometric, and algebraic information of a group and its actions in the analytic  context of von Neumann algebras. Overtime, these methods became more and more precise and sophisticated and reveled unprecedented connections with cohomological, geometric, and dynamical aspects in group theory. 

These techniques are very suitable to study the primeness phenomenon as well. Indeed, using his free malleable deformations in combination with a novel spectral gap argument, Popa was able to find a new, elementary proof for Ozawa's solidity result for the non-amenable free group factors, \cite{Po06}. His approach laid out the foundations for many important subsequent developments regarding the algebraic structure of factors. For instance, it provided the correct insight which later allowed Ozawa and Popa to show in a remarkable work \cite{OP07} that all non-amenable free group factors $L(\mathbb F)$ are in fact \emph{strongly solid}, i.e., for every diffuse amenable subalgebra $A\subset L(\mathbb F)$, its normalizing group $\mathcal N_{L(\mathbb F)}(A)=\{ u\in\mathcal U(L(\mathbb F)) \,:\, uAu^*=A\}$ generates an amenable von Neumann subalgebra in $L(\mathbb F)$---a result of great influence for the entire subsequent development on the classification of normalizers of algebras in many classes of factors.

Exploiting a new viewpoint which originates in his recent ingenious study of unbounded derivations on von Neumann algebras, Jesse Peterson further showed that every non-amenable, icc  group with positive first $\ell^2$-Betti number gives rise to a prime factor, \cite{Pe06}. 

These results along with Ozawa's earlier solidity results have spawned a rich activity in the classification of von Neumann algebras. Numerous technical outgrowth of these methods by several authors have led overtime to the discovery to many striking structural results including primeness, (strong) solidity, uniqueness of Cartan subalgebra, and beyond for large classes of von Neumann algebras, \cite{Po08,OP07,OP08,CH08,CI08,Pe09,PV09,FV10,Io10,IPV10,HPV10,CP10,Si10,Va10,Fi11,CS11,CSU11,Io11,PV11,HV12,PV12,Io12,Bo12,BHR12,Is12,BV13,Va13,Is14, CIK13,VV14,BC14}. 

\subsection{Statements of main results} In this paper we introduce new families of groups which give rise to prime von Neumann algebras. Many of these groups are intensively studied in various areas of Mathematics, especially topology and geometric group theory. Over time, via deep topological and geometric methods, many strong classification results emerged regarding the structure of these groups in both discrete  and measurable setting. However, momentarily, little is known about the structure of the von Neumann algebras associated with these groups and this paper initiates a study in this direction. Formally,  we define our class of groups as follows:

\begin{definition} A group $\G$ belongs to class ${\bf NC}_1 \cap {\bf Quot}(\mathcal C_{rss})$   if the following two conditions are satisfied:
\vskip 0.03in
 
 $\bullet$ {\bf NC$_1:$} $\, \G$ is non-amenable and admits an unbounded quasi-cocycle valued into one of its mixing, weakly-$\ell^2$, orthogonal representations (see Section \ref{sec: nc1} for relevant definitions);

$\bullet$ {\bf Quot$(\mathcal C_{rss}):$} $\,\G$ is a finite-step extension of groups belonging to $\mathcal C_{rss}$---the collection of all non-elementary hyperbolic groups and non-amenable, non-trivial free products of exact groups (see Sections \ref{sec: dichotomy} and \ref{sec: quot} for relevant definitions).
\end{definition}

While at a first look this definition may seem a little restrictive and not entirely illuminating, we will show however that $\Cal P$ is fairly large, containing many important examples of groups, such as:
\begin{enumerate}
\item [a)] Any infinite, central quotient of the pure braid group $PB_n(S_{g,k})$ of $n$ strands on a connected, compact and orientable surface $S_{g,k}$ of genus $g$ with $k$ boundary components---in particular, all surface pure braid groups $PB_n(S_{g,k})$, for $n\geq 1$ and either $g=1$ and $k\geq 1$ or $g\geq 2$ and $k\geq 0$;
\item [b)] Any mapping class group $\mod(S_{g,k})$, for $0\leq g\leq 2$ and $2g+k\geq 4$;
 \item [c)] Any Torelli group $\Cal I (S_{g, k})$ and Johnson kernel $\Cal K (S_{g, k})$, for $g=1, 2$ and $2g+k\geq 4$;
 \item [d)] Any group that is hyperbolic relative to a finite family of exact, residually finite, infinite, proper subgroups. 
\end{enumerate} 
Also we notice that from Theorem \ref{NCpreserve}, Proposition \ref{quot}, and \cite[Proposition 4.7]{VV14} it follows that {\bf ${\bf NC}_1 \cap {\bf Quot}(\mathcal C_{rss})$} is closed under commensuration. For the proofs of these results as well as other basic properties of this class we refer the reader to Sections \ref{sec: quot} and \ref{sec: nc1} in the sequel.

The central result of the paper shows that all groups in ${\bf NC}_1 \cap {\bf Quot}(\mathcal C_{rss})$  give rise to prime von Neumann algebras; in particular, for any $\G\in  {\bf NC}_1 \cap {\bf Quot}(\mathcal C_{rss})$ its von Neumann algebra \emph{cannot} be decomposed as $L(\G)= L(\Omega\times \Sigma)$, for any infinite groups $\Omega$ and $\Sigma$. 
\begin{main}\label{main1}
Let $\G$ be a group that can be realized as a finite-by-(${\bf NC}_1 \cap {\bf Quot}(\mathcal C_{rss})$) group.  Denote by $L(\G)$ its corresponding von Neumann algebra. If $p \in L(\G)$ is  a nonzero projection, then any two diffuse, commuting subalgebras $B,C\subseteq pL(\G)p$  generate together a von Neumann subalgebra $B\vee C$ which has infinite Pimsner-Popa index in $pL(\G)p$. In particular, $L(\G)$ is prime and hence $L(\G)\ncong L(\Omega\times \Sigma)$, for any infinite groups $\Omega$ and $\Sigma$. 
\end{main}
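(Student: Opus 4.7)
The plan is to argue by contradiction: suppose $B,C\subseteq pL(\Gamma)p$ are commuting diffuse subalgebras such that $B\vee C$ has \emph{finite} Pimsner-Popa index in $pL(\Gamma)p$, and derive a contradiction. A standard finite-index descent handles the initial finite-by-(\,) reduction, and after amplification we may assume $\Gamma\in{\bf NC}_1\cap{\bf Quot}(\mathcal{C}_{rss})$ and $p=1$. Write $M=L(\Gamma)$.

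The first step exploits the ${\bf NC}_1$ hypothesis. The unbounded quasi-cocycle $q:\Gamma\to\mathcal{H}_\pi$ into a mixing, weakly-$\ell^2$, orthogonal representation $\pi$ produces, via Peterson's Fock-space construction refined by Sinclair and by Chifan-Sinclair-Udrea, a closable real derivation $\delta$ on $M$ and an enveloping tracial $s$-malleable deformation $(\widetilde M,\alpha_t)$ of $M$. The weakly-$\ell^2$ hypothesis on $\pi$ forces the $M$-bimodule $L^2(\widetilde M)\ominus L^2(M)$ to be weakly contained in an amplification of the coarse bimodule $L^2(M)\otimes L^2(M)$. A spectral-gap / transfer argument in the style of \cite{OP07,CSU11}, applied to the commuting pair $B,C$ under the assumption that $B\vee C$ has finite Pimsner-Popa index in $M$, produces the following dichotomy for $\alpha_t$: either some corner of $B$ or of $C$ admits a Popa intertwining into a canonical subalgebra determined by the ``vanishing locus'' of $q$, or $B\vee C$ is amenable relative to such a subalgebra. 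Because $\pi$ is mixing, the first alternative forces the diffuse subalgebra in question into a type $\mathrm{I}$ piece, contradicting diffuseness; thus one retains the relative amenability conclusion.

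The second step leverages the ${\bf Quot}(\mathcal{C}_{rss})$ hypothesis by induction on the length of the tower of normal extensions terminating in a group $\Gamma_0\in\mathcal{C}_{rss}$. In the base case, if $\Gamma_0$ is non-elementary hyperbolic then Ozawa's solidity of $L(\Gamma_0)$ contradicts the simultaneous diffuseness of $B$, $C$, their commutation, and the relative amenability of $B\vee C$ together with non-amenability of $\Gamma_0$. If instead $\Gamma_0$ is a non-amenable, non-trivial free product of exact groups, the relative bi-exactness of $L(\Gamma_0)$ with respect to the free factors plays the same role and rules out a commuting diffuse pair whose join has finite Pimsner-Popa index. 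For the inductive step $1\to N\to\Gamma\to Q\to 1$ one transports the structural output up the extension by combining the relative bi-exactness inherited from $L(N)$ with Popa-Vaes conjugation-rigidity, as in the extension lemmas of \cite{CSU11,CIK13,VV14}, reducing to the base case. This strict improvement (infinite, not merely positive codimension) then forces the Pimsner-Popa index of $B\vee C$ inside $L(\Gamma)$ to be infinite.

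The main obstacle will be orchestrating the interaction between the ${\bf NC}_1$-deformation and the inherited relative bi-exactness at each floor of the tower: one has to ensure that the relative amenability produced by $\alpha_t$ matches the ``small'' subalgebras controlled by the bi-exact base, and that the mixing / weakly-$\ell^2$ bimodule information survives the normal extension intact. In particular, upgrading the intertwining alternative into the statement that $B\vee C$ has strictly infinite Pimsner-Popa index (rather than merely $B\vee C\ne L(\Gamma)$) requires a careful tracking of the canonical subalgebras along the extension, in the spirit of Vaes' rigidity-conjugation techniques paired with Ozawa's boundary amenability.
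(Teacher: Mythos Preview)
Your proposal has a genuine structural gap: the order in which you deploy the two hypotheses is reversed from what actually works, and your ``Step 1'' claims more than ${\bf NC}_1$ can deliver.

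First, a quasi-cocycle does \emph{not} produce a closable derivation on $M$; only a genuine $1$-cocycle does. What one obtains from an unbounded quasi-cocycle is the Gaussian one-parameter family $V_t$ (Section \ref{sec: deformations}), which is only \emph{asymptotically} bimodular (Theorem \ref{almostbimodular}). Spectral gap applied to the commuting pair $B,C$ (with $C$ non-amenable) gives uniform convergence $V_t\to\mathrm{Id}$ on $(B)_1$, but for a non-proper quasi-cocycle this does \emph{not} yield any intertwining of $B$ into a ``vanishing locus'' subalgebra, nor any relative-amenability dichotomy of the sort you describe. The set $\{\gamma:\|q(\gamma)\|\le C\}$ is not a subgroup, and the mixing hypothesis does not force a diffuse algebra carrying uniform $V_t$-convergence into a type $\mathrm I$ piece. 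Indeed, the paper explicitly records as an open problem (the Conjecture in Section 8) whether ${\bf NC}_1$ alone suffices for primeness, precisely because these deformations ``lack good averaging and uniform bimodularity properties.''

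The paper's proof runs in the opposite order. One uses ${\bf Quot}(\mathcal C_{rss})$ \emph{first}: via the comultiplication $\tilde\theta':M\to M\bar\otimes L(\Gamma_1)$ and the defining dichotomy of $\mathcal C_{rss}$ (Popa--Vaes/Ioana), either $B\vee C$ is amenable relative to $M\otimes 1$ (contradicting non-amenability of $\Gamma_1$ by finite index), or $B\preceq_M L(\Sigma)$ with $\Sigma=\ker\theta'$. Proposition \ref{masa} then transports the finite-index commuting pair into $rL(\Sigma)r$, the induction hypothesis (since $\Sigma\in{\bf NC}_1\cap{\bf Quot}_{n-1}$ by Proposition \ref{NCsimple}(d) and Proposition \ref{quot}) forces a corner $B_op_o\subseteq p_oL(\Sigma)p_o$ of finite index. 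Only \emph{now} does ${\bf NC}_1$ enter: the uniform convergence of $V_t$ on $(B)_1$ is pushed along the intertwiner and the finite-index inclusion via the chain Proposition \ref{conv1}, Corollary \ref{conv'}, Lemmas \ref{decay0}--\ref{decay7}, yielding that $q$ is bounded on $\Sigma$, hence on $\Gamma$ by \cite[Theorem 2.1]{CSU13}, the desired contradiction. Your proposal is missing this transfer machinery entirely, and without the intertwining produced by the $\mathcal C_{rss}$ step there is nothing for the ${\bf NC}_1$ deformation to latch onto.
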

In this form the result is sharp, as in general there are groups $\G$ in ${\bf NC}_1 \cap {\bf Quot}(\mathcal C_{rss})$ whose algebras $L(\G)$ \emph{do} contain commuting, non-amenable, diffuse subalgebras which, together generate subalgebras in $L(\G)$ of  infinite index. To see some basic examples, consider $B_n$ to be the braid group on $n\geq 6$ strands and denote by $Z$ its center. By \cite[Section 9.2]{FM11}, the quotient $\G=B_n/Z$ can be realized as a subgroup of index $n$ inside the mapping class group of a $(n+1)$-punctured surface of genus zero and hence Theorem \ref{NCpreserve} and Examples \ref{examplesNC} c)  further imply that $\G \in {\bf NC}_1$. Moreover, using Birman short exact sequence, Corollary \ref{surjections} shows  that $\G$ is a $(n-2)$-step extension of non-abelian free groups and hence $\G \in \mathcal P$. On the other hand, notice that  $B_n$ contains mixed braid subgroups of the form $B_p\times B_q< B_n$, where $p+q=n$ with $p,q\geq 3$. Then one can check that the quotients $\G_1=B_p/Z$ and $\G_2=B_q/Z$ are commuting, non-amenable subgroups of $\G$ which together generate a subgroup  $\langle \G_1, \G_2\rangle <\G$ of infinite index. This canonically implies that $L(\G_1)$ and $L(\G_2)$ are commuting, non-amenable, diffuse subalgebras of $L(\G)$ which together generate a von Neumann subalgebra $ L(\langle \G_1, \G_2\rangle)\subset L(\G)$ of infinite index. Notice that, all such groups in ${\bf NC}_1 \cap {\bf Quot}(\mathcal C_{rss})$ will give rise to prime von Neumann algebras which are not solid in the sense of Ozawa, \cite{Oz03}.  

We believe that  Theorem \ref{main1} can be further improved by showing that the algebra $B\vee C$ is actually never co-amenable inside $pL(\G)p$, \cite{Po86}.  Notice that this will follow verbatim from our current proofs if one will be able to show an analogue of Proposition \ref{masa} in the context of co-amenable inclusions rather that finite index inclusions.   

The proof of our result is based on Popa's deformation/rigidity theory and is obtained by induction on $n$ where $\G\in {\bf NC}_1\cap {\bf Quot}_n(\mathcal C_{rss})$. For the induction step we use in an essential way recent, powerful results due to Popa and Vaes \cite{PV11,PV12} and to Ioana \cite{Io12} regarding the classification of normalizers of subalgebras in von Neumann algebras arising from actions by non-elementary hyperbolic groups  and by free product groups, respectively. Assuming by contradiction that $B\vee C\subseteq pL(\G)p$ has finite index then condition $\G\in {\bf Quot}_n(\mathcal C_{rss})$ enables us to employ these structural results, via the methods developed in \cite{CIK13}, to intertwine $B$ (or $C$) onto a subalgebra $B_0\subseteq  qL(\Sigma)q \subset qL(\G)q$, where $\Sigma\lhd\G$ is a normal subgroup satisfying $\Sigma \in {\bf Quot}_{n-1}(\mathcal C_{rss})$ and $q\in B_0$ is a nonzero sub-projection of $p$. Moreover there exists a subalgebra $C_0\subset qL(\Sigma)q$ commuting to $B_0$ such that $B_0\vee C_0\subseteq qL(\Sigma)q$ has finite index. Since $\G\in {\bf NC}_1$ then by \cite[Theorem 2.1]{CSU13} we have $\Sigma \in {\bf NC}_1\cap {\bf Quot}_{n-1}(\mathcal C_{rss})$ and by the induction hypothesis one can find a nonzero corner $r B_0 r$ of finite index in $r L(\Sigma)r$. On the other hand, since $\G\in {\bf NC}_1$, then a spectral gap argument shows that the corresponding weak deformations $V_t$ on $L(\G)$ arising from an unbounded quasi-cocycle on $\G$ \cite{CS11} will converge uniformly to the identity on the unit ball $(B)_1$. From this, developing new aspects in the infinitesimal analysis of $V_t$ (Section \ref{sec: deformations}) we further show that $V_t$ converges uniformly to the identity on the unit ball $(rB_0r)_1$ and by the finite index assumption it follows that a $V_t$ has a uniform decay on the unit ball $(r(L\Sigma) r)_1$. Hence the quasi-cocycle is bounded on $\Sigma$ and by \cite[Theorem 2.1]{CSU13} it is bounded on $\G$, which is a contradiction; thus $B\vee C$ must have infinite index in $pL(\G)p$.
 
%The conclusion of Theorem \ref{main1} is obtained by pairing these structural results, via the methods from \cite{CIK13} and \cite{CSU13}, with a development on new aspects in the infinitesimal analysis of the weak deformations \cite{CS11} associated with quasi-cocycles on groups (Section \ref{sec: deformations}). 

Notice that, a spectral gap argument \cite[Proposition 1.7 (3)]{CS11} shows that for any group $\G\in  {\bf NC}_1$, any two commuting, infinite subgroups $\G_1, \G_2<\G$ generate an infinite index subgroup $\langle \G_1,\G_2 \rangle$  of $\G$ (in other words, $\G$ is not presentable by products). This should be seen as evidence supporting the far-reaching conjecture that Theorem \ref{main1} actually holds for all groups $\G$ satisfying only condition {\bf NC}$_1$ (even, without the mixing assumption on the representation). However, from a technical point of view, a successful implementation of this argument in the von Neumann algebra setting seems out of reach momentarily and depending heavily on investigating new aspects of the infinitesimal analysis of the weak deformations arising from quasi-cocycles. Unlike in the case of $1$-cocycles, the weak deformations arising from quasi-cocycles of groups seem to lack good averaging and uniform bimodularity properties which makes their analysis quite difficult. In our situation some of these difficulties could be by-passed through the knowledge that $\G$ admits a ``finite resolution'' by groups in $\mathcal C_{rss}$. Hence our result can be viewed as a first instance when knowing a little bit more information about the group (in addition of being in ${\bf NC}_1$) could decisively enhance the analysis on the weak deformations to conclude primeness results for many such group von Neumann algebras. It is then conceivable that there is actually an entire spectrum of such properties and a thorough investigation may reveal interesting results in this direction. 
 
As a byproduct of these methods, we obtain new applications of deformation/rigidity techniques to the algebraic structure of groups. Indeed, outgrowths of our methods in combination with the techniques developed in \cite{CSU13} allows us to deduce a result which  complements \cite[Theorem 3.5]{CSU13} in the case of groups satisfying condition ${\bf NC}_1$ above. 

\begin{main} For every $ \G\in {\bf NC}_1$ there exists a short exact sequence of groups
 $1\ra F \ra \G\ra \G_0\ra 1$, where $F$ is a finite and $\G_o$ is infinite conjugacy class. In particular, if $\G$ is assumed torsion free then $\G$ is infinite conjugacy class and non-inner amenable.
\end{main}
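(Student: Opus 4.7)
The plan is to take $F$ to be the FC-radical of $\G$, i.e., the subgroup of all elements whose $\G$-conjugacy class is finite. This is clearly a characteristic subgroup, hence normal in $\G$. Moreover, by B.~H.~Neumann's classical theorem on FC-groups, every finitely generated subgroup of $F$ is virtually abelian, so $F$ is locally virtually abelian and in particular amenable.

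The heart of the argument is to establish that $F$ is finite. Suppose for contradiction $F$ is infinite. Then $F$ is an infinite normal subgroup of $\G\in {\bf NC}_1$, and \cite[Theorem 2.1]{CSU13}---the same input used throughout this paper to pass ${\bf NC}_1$ to normal subgroups---forces $F\in {\bf NC}_1$; in particular, $F$ would be non-amenable, directly contradicting the preceding paragraph. Hence $F$ is finite.

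Set $\G_0=\G/F$. To verify $\G_0$ is icc, take $g\in \G$ with $gF\neq F$ and suppose its conjugacy class in $\G_0$ is finite. Then the set $\{hgh\inv F:h\in\G\}$ is finite, so the full $\G$-conjugacy class of $g$ is contained in a finite union of cosets of $F$, which is a finite set because $F$ is finite. But then $g$ belongs to the FC-radical $F$, contradicting $gF\neq F$. This gives the short exact sequence $1\to F\to \G\to \G_0\to 1$ with $F$ finite and $\G_0$ icc.

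For the final assertion, if $\G$ is assumed torsion-free then the finite subgroup $F$ must be trivial, so $\G\cong \G_0$ is icc. To rule out inner amenability, I would assume $\G$ is inner amenable and extract from a conjugation-invariant mean on $\G\setminus\{e\}$ a sequence of almost-invariant unit vectors for the conjugation representation on $\ell^2(\G\setminus\{e\})$; combined with the mixing, weakly-$\ell^2$ representation hosting the unbounded quasi-cocycle, a spectral gap argument mirroring the infinitesimal analysis of the weak deformations $V_t$ outlined in the sketch of Theorem \ref{main1} should force the quasi-cocycle to be bounded, contradicting $\G\in {\bf NC}_1$. I expect this last step to be the main technical obstacle: unlike for genuine $1$-cocycles (where such spectral gap averaging is well understood), quasi-cocycles carry defect terms that complicate the transfer of the estimates from the standard bimodule to the conjugation bimodule, and these terms need to be absorbed carefully when running the averaging.
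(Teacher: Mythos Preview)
Your overall strategy—take $F$ to be the FC-center, observe it is normal and amenable, and derive a contradiction from $\G\in NC_1$ if $F$ were infinite—is sound and considerably shorter than the paper's argument. However, the step where you invoke \cite[Theorem~2.1]{CSU13} is circular. That result (as used in Proposition~\ref{NCsimple}\,d)) passes $NC_1$ only to \emph{non-amenable} normal subgroups: non-amenability is part of the \emph{definition} of $NC_1$, so it must be assumed, not concluded. What \cite[Theorem~2.1]{CSU13} actually yields for an infinite normal $F\lhd\G$ is merely that the quasi-cocycle stays unbounded on $F$; but amenable groups can perfectly well carry unbounded quasi-cocycles into their regular representation (e.g.\ $\mathbb Z$), so this does not force $F$ to be non-amenable. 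The fix is immediate: cite Proposition~\ref{NCsimple}\,e) instead, which states directly that no group in $NC_1$ has an infinite normal amenable subgroup. With that substitution your argument goes through.

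For comparison, the paper does not quote Proposition~\ref{NCsimple}\,e) as a black box but gives a self-contained deformation/rigidity proof: it builds unit vectors $\xi_n\in L^2(L(\G))$ from growing unions $\Lambda_n$ of finite conjugacy classes, uses spectral gap (non-amenability of $\G$ and weak containment of $\pi$ in the regular representation) together with the asymptotic bimodularity of the Gaussian deformation $V_t$ to obtain $\sup_n\|\xi_n-V_t(\xi_n)\|_2\to 0$, and then exploits mixing and unboundedness of $q$ (via the finiteness of certain intersections $\g B_C'\g^{-1}\cap B_C'$) to conclude that the sequence $\Lambda_n$ stabilizes. This longer route also yields the more general statement about finite $\Sigma$-conjugation orbits for any non-amenable normal $\Sigma\lhd\G$, which feeds into Corollary~\ref{fcenter} and is needed later in the proof of Theorem~\ref{main1}; your shortcut does not recover that generalization.

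Your sketch for non-inner amenability is along the right lines and is essentially what \cite[Theorem~3.5]{CSU13} does; the paper itself defers to that reference rather than spelling the argument out.
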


In particular, this provides a more quasi-cohomological explanation for some recent results on non-inner amenability of acylindrically hyperbolic groups by Dahmani, Guirardel, and Osin \cite{DGO11}, Osin \cite{Os13}, and Minasyan and Osin \cite{MO13}. This result also implies that every groups in ${\bf NC}_1 \cap {\bf Quot}(\mathcal C_{rss})$ gives von Neumann algebra with finite dimensional center, result that is implicitly used in the proof of Theorem \ref{main1}.

%\begin{main}\label{main2}

%For every $1\leq i\leq n$ let $\G_i \in \mathcal P$ and for every $1\leq j\leq m$ let $P_j$ be a ${\rm II}_1$ factor. If we assume that $\left ( L(\G_1)\bar\otimes L(\G_2)\bar\otimes \cdots \bar\otimes L(\G_n)\right )^t \cong P_1\bar\otimes P_2\bar\otimes \cdots \bar\otimes P_m$ then $n\geq m$.  If we assume in addition that $P_j \cong L(\G'_j)$ for $\G'_j \in \mathcal P$ then there exists $\sigma$ a permutation of the set $\{1,\ldots ,n\}$ and $t_i$ with $t_1t_2 \cdots t_n =t$  such that $L(\G_i)^{t_i}\cong P_{\sigma(i)}$, for all $1\leq i\leq n$.    
%\end{main}

%%%%%%%%%%%%%%%%%%%%%%%%%%%%%%%%%%%%%%%%%%%%%%%%%%%%%%%%%%%%%%%%%%%%%%%%%%%%%
%%%%%%%%%%%%%%%%%%%%%%%%%%%%%%%%%%%%%%%%%%%%%%%%%%%%%%%%%%%%%%%%%%%%%%%%%%%%%
%%%%%%%%%%%%%%%%%%%%%%%%%%  NOTATIONS %%%%%%%%%%%%%%%%%%%%%%%%%%%%%%%%%%%%%%%%%%
%%%%%%%%%%%%%%%%%%%%%%%%%%%%%%%%%%%%%%%%%%%%%%%%%%%%%%%%%%%%%%%%%%%%%%%%%%%%%%
%%%%%%%%%%%%%%%%%%%%%%%%%%%%%%%%%%%%%%%%%%%%%%%%%%%%%%%%%%%%%%%%%%%%%%%%%%%%%

\subsection{Notations}

In this section we establish some notions that we will be used throughout the paper. 

A {\it tracial von Neumann algebra} $(M,\tau)$ is a pair that consists of a von Neumann algebra $M$ and a faithful normal tracial state $\tau$. We denote by $M_{+}$ the set of all positive elements $x\in M$ and by $\mathcal Z(M)$ the center of $M$.
For $x\in M$, we denote by $\|x\|$ the operator norm of $x$, and by $\|x\|_2=\sqrt{\tau(x^*x)}$ the 2-$norm$ of $x$.
Throughout the paper, we denote by $L^2(M)$ the Hilbert space obtained by completing $M$ with respect to $\|\cdot\|_2$, and consider the standard representation $M\subset\mathbb B(L^2(M))$.

If $M$ is a von Neumann algebra together with a subset $S\subset M$, then a state $\phi: M\rightarrow\mathbb C$ is called $S$-{\it central} if $\phi(xT)=\phi(Tx)$, for all $T\in M$ and $x\in S$. A tracial von Neumann algebra $(M,\tau)$ is called {\it amenable} if there exists an  $M$-central state $\phi:\mathbb B(L^2(M))\rightarrow\mathbb C$ such that $\phi(x)=\tau(x)$, for all $x\in M$. By a well-known theorem of A. Connes \cite{Co75}, $(M,\tau)$ is amenable if and only if it is approximately finite dimensional.

All inclusions of von Neumann algebras that appear in the paper are assumed to be unital unless specified otherwise. Let $(M,\tau)$ be a tracial von Neumann algebra and $P\subset M$ a von Neumann subalgebra. {\it Jones's basic construction} $\langle M,e_P\rangle\subset \mathbb B(L^2(M))$ is the von Neumann algebra generated by $M$ and the orthogonal projection $e_P:L^2(M)\rightarrow L^2(P)$. It is endowed with a faithful semifinite trace $Tr$ given by $Tr(xe_Py)=\tau(xy)$, for all $x,y\in M$. Also, we note that $E_P:={e_P}_{|M}:M\rightarrow P$ is the unique $\tau$-preserving conditional expectation onto $P$.

We say that $P\subset M$ is a {\it masa} if it is a maximal abelian $*$-subalgebra. 
The {\it normalizer of $P$ inside $M$}, denoted by $\mathcal N_{M}(P)$, is the set of all unitaries $u\in M$ such that $uPu^*=P$. We say that $P$ is {\it regular} in $M$ if $\mathcal N_{M}(P)''=M$.

Often if $M$ is a von Neumann algebra we denote by $M^{h}$ its hermitian part. If $S\subseteq M$ is a subset and $c>0$ we denote  by $(S)_c$ the set of all elements in $S$ whose operatorial norm does not exceed $c$.

Also if $B,C \subseteq M$ are subalgebras then we will denote by $B\vee C$ the von Neumann algebra generated by $B\cup C$  in $M$.   

Finally, whenever $\G$ is a group and $K,H\subseteq \G$ are subsets we will be denoting by $KH=\{kh\,:\, k\in K, h\in H\}$ and by $\langle K\rangle$ the subgroup generated by $K$ in $\G$.

%%%%%%%%%%%%%%%%%%%%%%%%%%%%%%%%%%%%%%%%%%%%%%%%%%%%%%%%%%%%%%%%%%%%%%%%%%%%%%%%%
%%%%%%%%%%%%%%%%%%%%%%%%%%%%%%%%%%%%%%%%%%%%%%%%%%%%%%%%%%%%%%%%%%%%%%%%%%%%%%%%%%
%%%%%%%%%%%%%%%%%%%%%%%%%%%%%%%%%%%%%%%%%%%%%%%%%%%%%%%%%%%%%%%%%%%%%%%%%%%%%%%%%%
%%%%%%%%%%%%%%%%%%%%%%          SOME PRELIMINARIES ON INTERTWINING RESULTS           %%%%%%%%%%%%%%%%%%%%%%%%%%%%%%%%%%%%%%%%%%%%%%%%%%%%%%%%%%%%%%%%%%%%%%%%%%%%%%%%%%%%%%%%%%%%%%%%%%%%%%%%%%%%%%%%%%%%
%%%%%%%%%%%%%%%%%%%%%%%%%%%%%%%%%%%%%%%%%%%%%%%%%%%%%%%%%%%%%%%%%%%%%%%%%%%%%%%%%%%
%%%%%%%%%%%%%%%%%%%%%%%%%%%%%%%%%%%%%%%%%%%%%%%%%%%%%%%%%%%%%%%%%%%%%%%%%%%%%%%%%%%%%

\section{Some Preliminaries on Intertwining Results}\label{sec: prelim}

\subsection{Popa's intertwining techniques} Over a  decade ago Popa developed a powerful technology for conjugating subalgebras of tracial von Neumann algebras, now termed the  {\it intertwining-by-bimodules techniques}, \cite [Theorem 2.1 and Corollary 2.3]{Po03}. For further reference we recall the following theorem.

\begin {theorem}[Popa, \cite{Po03}]\label{corner} Let $(M,\tau)$ be a separable tracial von Neumann algebra and $P,Q$ be two (not necessarily unital) von Neumann subalgebras of $M$. 

Then the following are equivalent:

\begin{enumerate}

\item There exist  non-zero projections $p\in P, q\in Q$, a $*$-homomorphism $\theta:pPp\rightarrow qQq$  and a non-zero partial isometry $v\in qMp$ such that $\theta(x)v=vx$, for all $x\in pPp$.

\item There is no sequence $u_n\in\mathcal U(P)$ satisfying $\|E_Q(xu_ny)\|_2\rightarrow 0$, for all $x,y\in M$.
\end{enumerate}

\end{theorem}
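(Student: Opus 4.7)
The plan is to prove both implications by passing to Jones' basic construction $\langle M, e_Q\rangle$ equipped with its canonical semifinite trace $Tr$ satisfying $Tr(x e_Q y)=\tau(xy)$ for $x,y\in M$, and by exploiting the convex geometry of conjugation orbits of $\mathcal U(P)$.

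For the (easier) direction (1) $\Rightarrow$ (2), I would argue by contradiction. Assume the intertwiner $\theta:pPp\to qQq$ and $v\in qMp\setminus\{0\}$ exist with $\theta(x)v=vx$ for $x\in pPp$, and assume also that there is a sequence $u_n\in\mathcal U(P)$ with $\|E_Q(xu_ny)\|_2\to 0$ for all $x,y\in M$. Using $v=qvp$ together with the intertwining identity, one computes
\[
vu_nv^*=q v (pu_np) v^* q=\theta(pu_np)\,vv^*,\qquad\text{hence}\qquad E_Q(vu_nv^*)=\theta(pu_np)\,E_Q(vv^*).
\]
Choosing $x=v$, $y=v^*$, the decay assumption forces $\theta(pu_np)E_Q(vv^*)\to 0$ strongly; since $E_Q(vv^*)\in qQq$ is a non-zero positive element (its trace equals $\|v\|_2^2>0$), compressing to a suitable spectral projection of $E_Q(vv^*)$ and using normality of $\theta$ together with $\theta(p)=q$ yields the required contradiction with $v\neq 0$.

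For the main direction (2) $\Rightarrow$ (1), I would consider the weakly closed convex hull
\[
K=\overline{\co}^{\,w}\{ue_Qu^*\,:\,u\in\mathcal U(P)\}\subset\langle M,e_Q\rangle.
\]
Each generator $ue_Qu^*$ is a projection with $Tr(ue_Qu^*)=1$, so $K$ is $\|\cdot\|_{2,Tr}$-bounded, positive, and invariant under conjugation by $\mathcal U(P)$. The first key step is to reinterpret (2) as the statement $0\notin K$: if convex combinations $\xi_n=\sum_i\lambda_i^{(n)}u_i^{(n)}e_Qu_i^{(n)\,*}$ satisfied $\|\xi_n\|_{2,Tr}\to 0$, then the identity $Tr(xe_Qy^*\cdot ue_Qu^*)=\tau(E_Q(u^*x)E_Q(y^*u))$ combined with polarization and a diagonal extraction would produce a sequence $u_n\in\mathcal U(P)$ contradicting (2). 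Hence $0\notin K$. Now let $a\in K$ be the unique element minimizing $\|\cdot\|_{2,Tr}$; by uniqueness together with $\mathcal U(P)$-invariance of $K$ one has $uau^*=a$ for every $u\in\mathcal U(P)$, so $a\in P'\cap\langle M,e_Q\rangle$ is a non-zero positive element of finite $Tr$-norm, and spectral calculus produces a non-zero projection $f\in P'\cap\langle M,e_Q\rangle$ with $Tr(f)<\infty$.

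The final (and hardest) step extracts the intertwining datum $(\theta,v)$ from $f$ via the classical dictionary between finite projections in $P'\cap\langle M,e_Q\rangle$ and $P$-$Q$-subbimodules of $L^2(M)$ that are finitely generated as right $Q$-modules: after compressing $f$ appropriately and choosing a Pimsner--Popa basis of the associated bimodule, one obtains a non-zero partial isometry $v\in qMp$ together with a normal $*$-homomorphism $\theta:pPp\to qQq$ satisfying $\theta(x)v=vx$. The main technical obstacle lies precisely in this extraction, which requires reducing $f$ to a ``rank-one'' intertwiner and verifying that both $v^*v\in(pPp)'\cap pMp$ and the induced map $\theta$ are well-defined and normal; this is where the full strength of the basic-construction machinery comes to bear.
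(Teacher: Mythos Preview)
The paper does not prove Theorem~\ref{corner}; it merely recalls it as a foundational result due to Popa, citing \cite[Theorem 2.1 and Corollary 2.3]{Po03}, and then uses it as a black box throughout. So there is no ``paper's own proof'' to compare against.

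That said, your outline for $(2)\Rightarrow(1)$ is exactly Popa's original argument: one shows $0\notin K=\overline{\co}^{\,w}\{ue_Qu^*\}$, takes the $\|\cdot\|_{2,Tr}$-minimizer $a\in K\cap P'\cap\langle M,e_Q\rangle$, passes to a nonzero finite-trace projection $f\in P'\cap\langle M,e_Q\rangle$ by spectral calculus, and then reads off $(p,q,\theta,v)$ from a Pimsner--Popa basis of the associated finitely generated $P$-$Q$-bimodule. This is correct and standard.

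Your $(1)\Rightarrow(2)$ argument, however, has a gap. From $E_Q(vu_nv^*)=\theta(pu_np)E_Q(vv^*)\to 0$ you cannot directly conclude a contradiction: since $u_n\in\mathcal U(P)$ need not restrict to unitaries in $pPp$, the elements $pu_np$ are merely contractions, and $\theta(pu_np)$ can legitimately be small (e.g.\ if $p=e_{11}\otimes 1$ in $P=M_2(\mathbb C)\otimes A$ and $u_n=(e_{12}+e_{21})\otimes 1$, then $pu_np=0$). The phrase ``compressing to a spectral projection of $E_Q(vv^*)$ and using $\theta(p)=q$'' does not repair this. The clean fix is to run your own ``hardest step'' in reverse: from $(p,q,\theta,v)$ build a nonzero finite-trace projection $f\in P'\cap\langle M,e_Q\rangle$, write $f=\sum_i m_ie_Qm_i^*$ with $E_Q(m_i^*m_j)=\delta_{ij}q_i$, and observe that for every $u\in\mathcal U(P)$
\[
\sum_{i,j}\|E_Q(m_j^*um_i)\|_2^2=\sum_i\|um_i\|_2^2=\sum_i\|m_i\|_2^2=Tr(f)>0,
\]
which is constant in $u$ and hence cannot tend to $0$.
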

If one of the two equivalent conditions in the theorem above holds then we say that {\it a corner of $P$ embeds into $Q$ inside $M$}, and write $P\preceq_{M}Q$. If in addition we have that $Pp'\preceq_{M}Q$, for any non-zero projection  $p'\in P'\cap 1_PM1_P$, then we write $P\preceq_{M}^{s}Q$.

\subsection{Finite index inclusions of tracial von Neumann algebras} If $P\subseteq M$ are II$_1$ factors, then the {\it Jones index} of the inclusion $P\subseteq M$, denoted  $[M:P]$, is  the dimension of $L^2(M)$ as a left  $P$-module. M. Pimsner and S. Popa showed that the number $[M:P]$ can be interpreted as the best constant appearing in several inequalities involving the conditional 
expectation $E_P$ \cite[Theorem 2.2]{PP86}. It also follows from their work that these constants can be used to define a ``probabilistic'' index of any inclusion of tracial von Neumann algebras \cite[Remark 2.4]{PP86}.  

\begin{definition}[Pimsner $\&$ Popa, \cite{PP86}] \label{index}
Let $(M,\tau)$ be a tracial von Neumann algebra  with a von Neumann subalgebra $P$. Let $$\lambda=\inf\;\{\|E_P(x)\|_2^2/\|x\|_2^2\;:\; x\in M_{+}\}.$$
The {\it index of the inclusion $P\subseteq M$} is defined as $[M:P]=\lambda^{-1}$, under the convention that $\frac{1}{0}=\infty$.
\end{definition}

%Notice that  when $M$ and $P$ are II$_1$ factors, then by \cite[Theorem 2.2]{PP86}, the usual index $[M:P]$ is equal to the one introduced in Definition \ref{index}.
 \noindent For further use we note the following basic facts: 
\begin{lem} \cite[Lemma 2.3]{PP86}\label{ramen} Let $(M,\tau)$ be a tracial von Neumann algebra and $P\subseteq M$ be a von Neumann subalgebra such that $[M:P]<\infty$. Then the following hold:
\begin{enumerate}
\item for every projection $p\in P$ we have $[pMp:pPp]<\infty$;
\item $M\preceq_{M}^{s}P$.
\end{enumerate}
\end{lem}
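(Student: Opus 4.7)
The plan for (1) is to work directly from the definition of $\lambda$. Since $[M:P]^{-1} = \lambda > 0$ means $\|E_P(x)\|_2^2 \geq \lambda\|x\|_2^2$ for every $x \in M_+$, I would observe that for a projection $p \in P$ the restriction $E_P|_{pMp}$ takes values in $pPp$ (because $p \in P$ gives $E_P(pxp) = pE_P(x)p$), and is in fact the unique $\tau(p)^{-1}\tau$-preserving conditional expectation $pMp \to pPp$. Since the associated $2$-norms differ only by the factor $\tau(p)^{-1/2}$, plugging the bound $\|E_P(x)\|_2^2 \geq \lambda\|x\|_2^2$ into the infimum defining $\lambda(pMp, pPp)$ immediately gives $[pMp : pPp] \leq [M:P] < \infty$.

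For (2), the plan is to use a finite Pimsner-Popa basis: since $[M:P] < \infty$, there exist $m_1, \dots, m_k \in M$ with $\sum_{i=1}^k m_i e_P m_i^* = 1$ in $\langle M, e_P\rangle$, and hence the reconstruction formula $x = \sum_i m_i E_P(m_i^* x)$ holds for every $x \in M$. Arguing by contradiction via Theorem \ref{corner}, suppose $M \not\preceq_M^s P$. Then there is a nonzero projection $p' \in M' \cap M = \mathcal Z(M)$ together with a sequence of unitaries $u_n \in \mathcal U(Mp')$ satisfying $\|E_P(a u_n b)\|_2 \to 0$ for every $a,b \in M$. Taking $a = m_i^*$ and $b = p'$ (noting that $p' u_n = u_n p' = u_n$ since $p'$ is central in $M$ and is the unit of $Mp'$) yields $\|E_P(m_i^* u_n)\|_2 \to 0$ for each $i$. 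Inserting this into the reconstruction identity and applying the triangle inequality in $L^2$ together with the standard bound $\|m_i \eta\|_2 \leq \|m_i\| \|\eta\|_2$ gives
\[
\|u_n\|_2 \leq \sum_{i=1}^k \|m_i\| \, \|E_P(m_i^* u_n)\|_2 \longrightarrow 0,
\]
which contradicts $\|u_n\|_2^2 = \tau(p') > 0$.

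The only step I expect requires any care is justifying the existence of such a finite Pimsner-Popa basis in the general (possibly non-factorial) tracial setting. This is part of the foundational construction in \cite{PP86}: one iteratively selects elements $m_i \in M$ together with spectral projections of $E_P(m_i^* m_i)$ so that the Jones projections $m_i e_P m_i^*$ are pairwise orthogonal and sum to $1$, with the finite-index hypothesis $\lambda(M,P) > 0$ guaranteeing that the process terminates after finitely many steps. Once this tool is in hand, both conclusions follow formally from the Pimsner-Popa inequality and Popa's intertwining criterion (Theorem \ref{corner}), so I do not anticipate any further substantive obstacles.
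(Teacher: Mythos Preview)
The paper does not actually prove this lemma; it is stated with a citation to \cite[Lemma 2.3]{PP86} and used as a black box. Your argument is correct and is essentially the standard one: part (1) follows immediately from the observation that $E_P|_{pMp}=E_{pPp}$ and that the ratio $\|E_P(x)\|_2^2/\|x\|_2^2$ is unchanged under the renormalized trace, while part (2) combines a finite Pimsner--Popa basis with Popa's intertwining criterion exactly as you describe. The one point you flag---existence of a \emph{finite} orthonormal basis $\{m_i\}$ in the general tracial (non-factor) setting when $\lambda(M,P)>0$---is indeed the only nontrivial input, and it is established in \cite{PP86} via the iterative construction you outline; once that is granted, the contradiction $\|u_n\|_2\to 0$ versus $\|u_n\|_2^2=\tau(p')>0$ goes through verbatim.
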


As explained in \cite{CIK13}, it turns out that this precise notion of  index is a well suited technical tool to study global decomposition properties for von Neumann algebras, up to intertwining. For instance, it enables one to show the following version of  \cite[Proposition 3.6]{CIK13} involving commuting subalgebras rather than masa's. Its proof is similar with the one presented in \cite{CIK13} but we include all details for reader's convenience.  

\begin{prop}\label{masa}
Let $(M,\tau)$ be a tracial von Neumann algebra and let $z\in M$ be a non-zero projection. Assume that   $P\subseteq zMz$ and $N\subseteq M$ are von Neumann subalgebras such that $P\vee (P'\cap zMz)\subseteq zMz$ has finite index and that $P\preceq_{M}N$.

Then one can find a scalar $s>0$, non-zero projections $r\in N, p\in P$, a subalgebra $P_0\subseteq rNr$, and a $*$-isomorphism $\theta: pPp\ra P_0$ such that the following properties are satisfied:
\begin{enumerate} 
\item  $P_0\vee (P_0'\cap rNr)\subseteq rNr$ has finite index;
\item there exist a non-zero partial isometry $v\in M$ such that $rE_N(vv^*)=E_N(vv^*)r\geq sr$ and $\theta(pPp)v=P_0 v=r v pPp$;
\item $E_N(v (pP'p\cap pMp )v^*)''\subseteq P_0'\cap rNr$.
\end{enumerate}
\end{prop}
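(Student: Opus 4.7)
The approach follows the template of Proposition~3.6 in \cite{CIK13}, tracking the pair $P$ and $P' \cap zMz$ in place of a masa. First I would invoke Popa's intertwining theorem (Theorem~\ref{corner}) applied to $P \preceq_M N$ to extract nonzero projections $p \in P$ and $q \in N$, a $*$-homomorphism $\theta : pPp \to qNq$, and a nonzero partial isometry $v_0 \in qMp$ satisfying $\theta(x) v_0 = v_0 x$ for every $x \in pPp$. Set $P_0 := \theta(pPp) \subseteq \theta(p) N \theta(p)$; this is the candidate subalgebra.

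A short computation using the intertwining and its adjoint $v_0^* \theta(x) = x v_0^*$ shows that $v_0 v_0^*$ commutes with $P_0$, and hence so does $f := E_N(v_0 v_0^*) \in (\theta(p) N \theta(p))_+$. Choose $s > 0$ small enough that $r := \chi_{[s, \|f\|]}(f)$ is nonzero; then $r \in N$, $r \leq \theta(p)$, $r$ commutes with $P_0$, and $rf = fr \geq sr$. A standard cut-down of $v_0$---polar-decomposing $rv_0$ and extending to a maximal orthogonal family of such cuts---produces a nonzero partial isometry $v \in M$ with $vv^* \leq r$, $E_N(vv^*) \geq sr$, and the intertwining $\theta(x)v = vx$ still in force (the latter survives because $r$ commutes with $P_0$). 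This gives assertion~(2). Assertion~(3) is then immediate from
\[
\theta(x) E_N(v y v^*) = E_N(v\, xy\, v^*) = E_N(v\, yx\, v^*) = E_N(v y v^*) \theta(x),
\]
valid for $x \in pPp$ and $y \in pP'p \cap pMp$, which places $E_N\bigl(v(pP'p \cap pMp)v^*\bigr)$ inside $P_0' \cap rNr$.

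The main obstacle is assertion~(1), the finite-index transfer. Set $Q := P \vee (P' \cap zMz) \subseteq zMz$ and $Q_0 := P_0 \vee (P_0' \cap rNr) \subseteq rNr$. Lemma~\ref{ramen}(1) applied at $p \in Q$ gives $[pMp : pQp] < \infty$, and monotonicity of the index together with $pQp \subseteq \widetilde Q := pPp \vee (pP'p \cap pMp)$ yields $[pMp : \widetilde Q] < \infty$. It remains to transport the resulting Pimsner--Popa inequality across $v$: for $y \in (rNr)_+$ the compression $v^* y v$ lies in $pMp$ and satisfies the lower bound $\tau(v^* y v) = \tau\bigl(y\, E_N(vv^*)\bigr) \geq s\, \tau(y)$, while the map $w \mapsto E_N(v w v^*)$ sends $\widetilde Q$ into $Q_0$ (by the intertwining on the $pPp$ factor and by step~(3) on the $pP'p \cap pMp$ factor) and is bounded below in $2$-norm because $E_N(vv^*) \geq sr$. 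Chaining these two estimates produces a Pimsner--Popa inequality $\|E_{Q_0}(y)\|_2^2 \geq \lambda \|y\|_2^2$ with $\lambda > 0$ depending only on $s$ and the original index, whence $[rNr : Q_0] < \infty$. The delicate point is precisely this chaining: since $v \notin N$, the finite-index condition cannot be transported directly, and the uniform lower bound $E_N(vv^*) \geq sr$ engineered in the previous step is what prevents $E_N$ from collapsing the estimate.
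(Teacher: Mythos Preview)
Your proposal follows the same template as the paper's proof: invoke intertwining to obtain $\theta$ and $v$, choose $r$ as a spectral projection of $E_N(vv^*)$, verify (2) and (3), then transfer finite index for (1). Your verification of (2) and (3) is essentially correct (and replacing $v_0$ by the polar part of $rv_0$ is a legitimate variant---the paper simply keeps $v$ unchanged and sets $r=q_s$).

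The gap is in your argument for (1). The assertion that $w\mapsto E_N(vwv^*)$ is ``bounded below in $2$-norm'' is false as stated: take $w=p-v^*v\in pP'p\cap pMp\subset\widetilde Q$, then $vwv^*=0$. Nor does the trace bound $\tau(v^*yv)\ge s\,\tau(y)$ give what you need. To run your chaining you would require $\|v^*yv\|_2\ge c'\|y\|_2$ for $y\in(rNr)_+$, i.e., $\tau(ayay)\ge c'^2\tau(y^2)$ with $a=vv^*$; the hypothesis $E_N(a)\ge sr$ only yields the first-moment estimate $\tau(ay)\ge s\,\tau(y)$ and does not control $\tau(ayay)$ from below by $\tau(y^2)$ without further argument.

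The paper does not attempt this direct chaining. Instead it first observes that $v\widetilde Qv^*\subseteq q'Mq'$ (with $q'=vv^*$) has finite index, and then invokes \cite[Lemma~2.3]{CIK13} (see also \cite[Lemma~1.6(1)]{Io11}) to conclude that
\[
q_s\,E_N\bigl(v\widetilde Qv^*\bigr)''\,q_s\ \subseteq\ q_sNq_s
\]
has finite index. This external lemma is precisely the finite-index transfer you are trying to perform by hand, and its proof is a separate self-contained argument. Once this is granted, the paper checks that $E_N(v\widetilde Qv^*)''\subseteq\theta(pPp)\vee E_N\bigl(v(pP'p\cap pMp)v^*\bigr)''$, and since the latter (cut by $q_s$) sits inside $P_0\vee(P_0'\cap rNr)$, monotonicity of the index gives (1). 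You should either cite this lemma or supply a complete proof of it; your sketch does not suffice.
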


\begin{proof} Since $P\preceq_{M}N$, one can find nonzero projections $p\in P$ and $q\in N$, a nonzero partial isometry $v\in pMq$, and a $*$-homomorphism $\theta: pPp\ra qNq$ such that $\theta(x)v=vx$, for all $x\in pPp$.   Notice that $v^*v\in pPp'\cap pMp$ and $q':=vv^*\in \theta (pPp)'\cap qMq$. Moreover, without any loss of generality, we can assume that the support projection of $E_N(q')$ equals $q$.  Observe that for every $x\in pPp$ and $y\in pPp'\cap pMp$ and  we have 
\begin{equation*}\begin{split} &\theta(x) vyv^*=vxyv^*=vyxv^*=vyv^*\theta(x). \end{split}\end{equation*} 
Thus we have $v(pPp'\cap pMp)v^*\subseteq  \theta(pPp)'\cap qMq $ and hence \begin{equation}\label{2.4.3}E_N (v(pPp'\cap pMp)v^*)\subseteq \theta(pPp)'\cap qNq.\end{equation}

Since  the inclusion $P\vee (P'\cap zMz)\subseteq zMz$ has finite index then also $pPp \vee (pPp'\cap pMp)=p(P\vee (P'\cap M))p\subseteq pMp$ has finite index and hence $v(pPp\vee (pPp'\cap pMp))v^*\subseteq vpMpv^*=q'Mq'$ has finite index too. 

For every $s>0$ we denote by $q_s=1_{[s,\infty)}(E_N(q'))$ and notice that $\|q_s-q\|\ra 0$, as $s\ra 0$ and $q_sE_N(q')=E_N(q')q_s\geq s q_s$. This further implies that $\|q_sv-v\|\ra 0$, as $s\ra 0$; in particular, we can pick $s>0$ such that $q_sv\neq 0$. Applying \cite[Lemma 2.3]{CIK13} (see also \cite[Lemma 1.6(1)]{Io11}) it follows that the  inclusion 
\begin{equation}\label{2.4.1} 
q_sE_N(v(pPp\vee (pPp'\cap pMp))v^*)'' q_s \subseteq q_sNq_s
\end{equation}
has finite index.
Since $v^*v\in pPp'\cap pMp$ then $v(pPp\vee (pPp'\cap pMp))v^*\subseteq  v(pPp)v^* \vee v(pPp'\cap pMp)v^*= \theta(pPp)vv^* \vee v(pPp'\cap pMp)v^*$. This further implies  \begin{equation*}\begin{split}E_N(v(pPp\vee (pPp'\cap pMp))v^*)'' &\subseteq E_N(\theta(pPp)vv^* \vee v(pPp'\cap pMp)v^*)'' \\ &\subseteq \theta(pPp)\vee   E_N(v(pPp'\cap pMp)v^*)''.\end{split}\end{equation*}  

This last containment together with relation (\ref{2.4.1}) give that the inclusion $\theta(pPp)q_s\vee   q_sE_N(v(pPp'\cap pMp)v^*)'' q_s \subseteq q_s Nq_s$ has finite index.  Then the statement follows from this and (\ref{2.4.3}) by letting $r:=q_s$ , and $P_0:=\theta (pPp) q_s \subset q_s Nq_s$. \end{proof}

\subsection{Dichotomy for normalizers inside crossed products}\label{sec: dichotomy}

Recently, Sorin Popa and Stefaan Vaes obtained a series of ground breaking results regarding the classification of normalizers of algebras inside crossed products arising from large families of groups including free groups \cite[Theorem 1.6]{PV11} or hyperbolic groups \cite[Theorem 1.4]{PV12}. Motivated by these results and by the remarkable subsequent developments in the realm of amalgamated free products due to Adrian Ioana \cite{Io12} (see also \cite{Va13}) we will introduce a new class of groups. However to be able to state it  properly we need one more definition.

\begin{definition} \cite[Section 2.2]{OP07}
Let $(M,\tau)$ be a tracial von Neumann algebra, $p\in M$ a projection, and $P\subset pMp,Q\subset M$ von Neumann subalgebras. We say that $P$ is {\it amenable relative to $Q$ inside $M$} if there exists a $P$-central state $\phi:p\langle M,e_Q\rangle p\rightarrow\mathbb C$ such that $\phi(x)=\tau(x)$, for all $x\in pMp$.
\end{definition}

\begin{definition} A group $\G$ belongs to class $\Cal C_{rss}(\Sigma)$ if it is exact \cite{KW99} and there exists a malnormal, proper subgroup $\Sigma<\G$ for which  the following dichotomy property holds:  Assume  $\Gamma\curvearrowright B$ is any trace preserving action on a tracial von Neumann algebra $(B,\tau)$ and denote by $M=B\rtimes\Gamma$. Let $p\in M$ be a projection and $A\subset pMp$ a von Neumann subalgebra that is amenable relative to $B\rtimes \Sigma$ inside $M$. Then either $A\preceq_{M}B\rtimes \Sigma $ or $P:=\mathcal N_{pMp}(A)''$ is amenable relative to $B\rtimes \Sigma$ inside $M$.\end{definition} 

Summarizing the results described above, the following classes of groups belong to $\Cal C_{rss}(\Sigma)$: 

\begin{enumerate}
\item \cite[Theorem 3.1, Lemma 4.1, and Theorem 7.1]{PV11} Any weakly amenable group with positive first $\ell^2$-Betti number---here $\Sigma=\langle e\rangle$;  
\item \cite[Theorem 3.1]{PV12} Any weakly amenable, non-amenable, bi-exact group---here $\Sigma=\langle e\rangle$;
\item \cite[Theorem 1.6]{Io12},\cite[Theorem A]{Va13} Any non-amenable, nontrivial free product $\G= \G_1\ast_\Lambda \G_2$, where $\G_i$ are exact and $\La<\G_i$ is assumed malnormal---here $\Sigma=\Lambda$.  
\end{enumerate}
From now on, whenever $\Sigma =\langle e \rangle$  then the class $\Cal C_{rss}(\Sigma)$ will be simply denoted by $\Cal C_{rss}$. In the same spirit as in \cite{VV14} one can establish that the class $\Cal C_{rss}$ is closed under commensurability.

%%%%%%%%%%%%%%%%%%%%%%%%%%%%%%%%%%%%%%%%%%%%%%%%%%%%%%%%%%%%%%%%%%%%%%%%%%%%%
%%%%%%%%%%%%%%%%%%%%%%%%%% %%%%%%%%%%%%%%%%%%%%%%%%%%%%%%%%%%%%%%%%%%%%%%%%%%%
%%%%%%%%%%%%%%%%%%%%%%%%%%       CLASS $Quot_n(\Cal C)$              %%%%%%%%%%%%%%%%%%%%%%%%%%%%%%%%%%%%%%%%%%%%%%%%%%%%%%%%%%%%%%%%%%%%%%%%%%%%%%%%%%%%%%%%%%%%%%%%%%%%%%%%%%%%%%%%%%%%%%%
%%%%%%%%%%%%%%%%%%%%%%%%%%%%%%%%%%%%%%%%%%%%%%%%%%%%%%%%%%%%%%%%%%%%%%%%%%%%%%
%%%%%%%%%%%%%%%%%%%%%%%%%%%%%%%%%%%%%%%%%%%%%%%%%%%%%%%%%%%%%%%%%%%%%%%%%%%%%

\section{Class $Quot(\Cal C)$}\label{sec: quot} 

Let $\Cal C$ be a class of groups. We define $Quot_{1}(\mathcal C)=\mathcal C$. Given an integer $n\geq 2$, we say that a group $\G$ belongs to the class $Quot_n(\Cal C)$ if the following are satisfied: 
\begin{enumerate}
\item there exist a collection of groups  $\Gamma_k$, $1\leq k\leq n$ and a collection of surjective homomorphisms $ \pi_k:\G_{k}\rightarrow \G_{k-1}$ such that $\G_1\in \Cal C$ and $ker(\pi_k)\in \Cal C$, for all $2\leq k\leq n$;  
\item $\G$ and $\G_n$ are commensurable.
\end{enumerate}

\begin{definition} We denote by $Quot(\mathcal C):=\cup_{n\in\mathbb N} Quot_n(\mathcal C)$ and any group $\G\in Quot(\mathcal C)$ is called a \emph{finite-step extension of groups in $\mathcal C$}.
\end{definition}

Below we list some useful basic algebraic properties of this family of groups. We will omit most of the proofs as they are either straightforward or already contained in \cite[Lemmas 2.9-2.10]{CIK13}.

\begin{prop}\label{quot} The following properties hold: 
\begin{enumerate}
\item If $\G\in Quot_n(\Cal C)$ and $p: \La \ra \G$ is a surjective homomorphism such that $ker(p)\in\Cal C$ then $\La \in Quot_{n+1}(\Cal C)$. 
\item If $\G_i \in Quot_{n_i}(\mathcal C)$ for all $1\leq i\leq k$ then $\G_1 \times \G_2\times \cdots \times \G_k \in Quot_{n_1+n_2+\cdots +n_k}(\Cal C)$.
\item If $\Cal C$ is closed under commensurability then $Quot_n(\Cal C)$ is also closed under commensurability.
\item Let $\G\in Quot_n(\Cal C)$ and let $ \pi_k:\G_{k}\rightarrow \G_{k-1}$ be the surjective homomorphisms satisfying the previous definition and let $p_n=\pi_2\circ\pi_3\circ\cdots \circ\pi_n:\Gamma_n\rightarrow\Gamma_1$. Then the following hold:
\begin{enumerate} 
\item If $\La<\G_1$ is a subgroup such that $\La\in \Cal C$ then $p_n^{-1}(\La)\in Quot_n(\Cal C)$;
\item $\ker(p_n)\in Quot_{n-1}(\Cal C)$; moreover, if $\Cal C$ is closed under commensurability and $\La<\G$ is a subgroup such that $p_n(\La)$ is finite  then $p^{-1}_n(p_n(\La))\in Quot_{n-1}(\Cal C)$. 
\end{enumerate}

\item If $\Cal C$ is closed under commensurability up to finite kernel then $Quot_n(\Cal C)$ is also closed under commensurability up to finite kernel. 
\item If all the groups in $\Cal C$ are exact then so are all the groups in $Quot_n(\Cal C)$, \cite{DL14}. 
\end{enumerate}
\end{prop}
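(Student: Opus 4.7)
The plan is to derive all six items from (1), which is the principal construction; the remaining statements follow by variations or routine commensurability arguments.

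For (1), let $\Gamma_1\twoheadleftarrow\cdots\twoheadleftarrow\Gamma_n$ be the tower for $\Gamma$, and let $\phi:H\to H'$ be the isomorphism between finite-index subgroups $H\leq \Gamma$ and $H'\leq \Gamma_n$ realizing $\Gamma\sim\Gamma_n$. First I would descend $H'$ backwards through the tower by setting $L_n:=H'$ and $L_{k-1}:=\pi_k(L_k)$. Each $L_k$ is of finite index in $\Gamma_k$, so $\ker\pi_k\cap L_k$ is of finite index in $\ker\pi_k\in\mathcal C$, and (in the cases of interest, where $\mathcal C$ is closed under finite-index subgroups, cf.\ Section~\ref{sec: dichotomy}) the $L_k$ form a tower of length $n$ with top $H'$ and the same structural properties. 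Setting $\Gamma_{n+1}:=p^{-1}(H)$ (of finite index in $\Lambda$) and $\pi_{n+1}:=\phi\circ p|_{\Gamma_{n+1}}:\Gamma_{n+1}\twoheadrightarrow H'$ extends this rectified tower by a final link with kernel $\ker p\in\mathcal C$, witnessing $\Lambda\in Quot_{n+1}(\mathcal C)$.

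For (2), I concatenate towers: if $A_\bullet$ and $B_\bullet$ are towers for $\Gamma_1,\Gamma_2$, then $A_1,\ldots,A_{n_1},A_{n_1}\times B_1,\ldots,A_{n_1}\times B_{n_2}$ is a tower of length $n_1+n_2$, the first new surjection $A_{n_1}\times B_1\twoheadrightarrow A_{n_1}$ having kernel $B_1\in\mathcal C$ and the subsequent ones kernels $\ker(B_k\to B_{k-1})\in\mathcal C$; iterate for $k$ factors. Part (3) follows from transitivity of commensurability applied to the existing tower for $\Gamma$. For (4a), set $\Gamma_k^\Lambda:=p_k^{-1}(\Lambda)$; the identity $p_k=p_{k-1}\circ\pi_k$ forces $\pi_k$ to restrict to a surjection $\Gamma_k^\Lambda\twoheadrightarrow\Gamma_{k-1}^\Lambda$ with kernel $\ker\pi_k\in\mathcal C$ and base $\Gamma_1^\Lambda=\Lambda\in\mathcal C$. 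For (4b), the first clause follows by induction on $n$: the base case $n=2$ is just $\ker p_2=\ker\pi_2\in\mathcal C$, while for $n\geq 3$ the restriction $\pi_n:\ker p_n\twoheadrightarrow\ker p_{n-1}$ is surjective with kernel $\ker\pi_n\in\mathcal C$, so (1) and the inductive hypothesis combine to give $\ker p_n\in Quot_{n-1}(\mathcal C)$; the moreover clause follows since $p_n^{-1}(p_n(\Lambda))$ contains $\ker p_n$ as a finite-index subgroup and (3) makes $Quot_{n-1}(\mathcal C)$ commensurability-closed.

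For (5), the argument of (3) lifts to commensurability up to finite kernel verbatim, using transitivity of that coarser equivalence. Finally, (6) is proved by induction on tower length, invoking the Kirchberg--Wassermann-type stability of exactness under extensions and under commensurability (see \cite{DL14}): $\Gamma_1\in\mathcal C$ is exact, each extension $1\to\ker\pi_k\to\Gamma_k\to\Gamma_{k-1}\to 1$ preserves exactness, and the final commensurability with $\Gamma_n$ concludes. The main technical obstacle is the commensurability book-keeping in step (1): since $\Lambda$ surjects only onto $\Gamma$ and not directly onto $\Gamma_n$, one cannot naively append $\Lambda$ to the top of the existing tower. The remedy of descending $H'$ through the entire tower is what forces the mild closure assumption on $\mathcal C$ under finite-index subgroups, which is invisible in the statement but holds in all intended applications.
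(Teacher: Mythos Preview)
Your treatment of parts (1)--(4) and (6) is essentially fine; the paper itself omits these, pointing to \cite{CIK13}. But your argument for (5) has a genuine gap, and this is exactly the part the paper chooses to prove in detail.

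You claim the transitivity argument from (3) ``lifts verbatim'' to commensurability up to finite kernel. It does not. If $\Lambda_2$ is commensurable up to finite kernel with $\Lambda_1\in Quot_n(\mathcal C)$, and $\Lambda_1$ is commensurable with the tower top $\Gamma_n$, transitivity only tells you that $\Lambda_2$ is \emph{commensurable up to finite kernel} with $\Gamma_n$. The definition of $Quot_n(\mathcal C)$, however, demands genuine commensurability with the top of a tower. So the existing tower cannot be reused; a new one must be built. Note also that your argument never invokes the hypothesis that $\mathcal C$ is closed under commensurability up to finite kernel, which is itself a signal that something is missing.

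The paper's construction is as follows. After passing to a finite-index subgroup one may take $\Sigma_1=\Gamma_n$, and there is a finite normal subgroup $\Omega_1\lhd\Sigma_1$ to kill. Push $\Omega_1$ down the tower: set $\Theta_n:=\Omega_1$ and recursively $\Theta_{k-1}:=\pi_k(\Theta_k)$, each a finite normal subgroup of $\Gamma_{k-1}$. The induced surjections $\tilde\pi_k:\Gamma_k/\Theta_k\to\Gamma_{k-1}/\Theta_{k-1}$ form a new tower whose kernels satisfy $\ker(\tilde\pi_k)\cong\ker(\pi_k)/(\ker(\pi_k)\cap\Theta_k)$; these lie in $\mathcal C$ precisely because $\mathcal C$ is assumed closed under commensurability up to finite kernel, and likewise $\Gamma_1/\Theta_1\in\mathcal C$. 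The top of this quotient tower is $\Sigma_1/\Omega_1\cong H$, onto which a finite-index subgroup $\Sigma_2<\Lambda_2$ surjects with finite kernel $\Omega_2$; composing gives a surjection $\Sigma_2\to\Gamma_{n-1}/\Theta_{n-1}$ whose kernel is an extension of $\ker(\tilde\pi_n)\in\mathcal C$ by the finite group $\Omega_2$, hence again in $\mathcal C$. Thus $\Sigma_2$ sits atop a length-$n$ tower and $\Lambda_2\in Quot_n(\mathcal C)$.
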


Before we present the proof we explain the terminology used in (5) above; two groups $\G_1$ and $\G_2$ are called \emph{commensurable up to finite kernel}  if there exist finite index subgroups $\La_i\leqslant \G_i$, a group $H$, and surjections $\psi_i:\La_i\ra H$ such that $ker(\psi_i)$ are finite. One can easily check that this is the smallest equivalence relation whose classes are closed under taking both finite index subgroups and quotients under normal finite subgroups. 
\vskip 0.08in
\noindent \emph{Proof of Proposition \ref{quot}} We show only (5). Assume that $\La_1$ is commensurable up to finite kernel with $\La_2$ and $\La_1\in Quot_n(\Cal C)$. Thus one can find  finite index subgroups $\Sigma_i <\La_i$, a group $H$, and surjective homomorphisms $\phi_i :\Sigma_i\ra H$ with finite kernels $\Omega_i:=ker(\phi_i)$. Denote by $\tilde\phi_i : \Sigma_i/\Omega_i\ra H$ the induced isomorphisms. Since $\La_1\in Quot_n(\Cal C)$, $\Sigma_1 < \La_1$ is finite index, and $\Cal C$ is closed under taking finite index subgroups,  it follows that $\Sigma_1\in Quot_n(\Cal C)$.  Thus there exist surjections $\pi_k :\G_k \ra \G_{k-1}$ such that $ker(\pi_k) \in \Cal C$ for every $2\leq k \leq n$, $\G_1 \in\Cal C$, and $\Sigma_1=\G_n$.  Let $\Theta_n:=\Omega_1$  and then for every $2\leq k\leq n$ define recursively $\Theta_{k-1}:= \pi_{k}(\Theta_k)< \G_{k-1}$; since $ \pi_k $'s  are surjections then $\Theta_{k-1}$ is a  finite normal subgroup of $\G_{k-1}$. Moreover, for each $2\leq k\leq n$ the map $\tilde \pi_k : \G_k/\Theta_k \ra \G_{k-1}/\Theta_{k-1}$ given by $\tilde \pi_k (x \Theta_k)= \pi_k(x)\Theta_{k-1}$ is a surjective homomorphisms and $ker(\tilde \pi_k)=  ker(\pi_k)\Theta_k$. By the isomorphism theorem we have $ker(\tilde \pi_k)\cong ker(\pi_k)/ (ker(\pi_k)\cap \Theta_k)$ and since $\Theta_k$ is finite and $\Cal C$ is closed under commensurability by finite kernel it follows that $ker (\tilde \pi_k)\in \Cal C$, for all $2\leq k\leq n$ and $\G_1/\Theta_1 \in \Cal C$. 
 If $p: \Sigma_2\ra \Sigma_2/\Omega_2$ denotes the canonical projection then the formulas $\tilde\pi'_n:= \tilde \pi_n\circ \tilde \phi^{-1}_2\circ \tilde\phi_1\circ  p: \Sigma_2  \ra \G_{n-1}/\Theta_{n-1}$ and $\tilde\pi'_{k}:=\tilde\pi_k: \G_k/\Theta_k \ra \G_{k-1}/\Theta_{k-1}$ for every $2\leq k\leq n-1$ define surjective homomorphisms. Moreover, since the kernel $ker(\tilde \pi'_n)= p^{-1}( \tilde \phi^{-1}_1 (\tilde \phi_2(ker(\pi_n))))$ satisfies $ker(\tilde \pi'_n)/\Omega_2 \cong ker(\pi_k)$ and $\Omega_2$ is finite we have $ker(\tilde \pi'_n)\in \Cal C$. By above we have $\ker(\tilde\pi'_k)\in \Cal C$, for all $2\leq k\leq n-1$, and $\G_1/\Theta_1\in \Cal C$ which further imply that $\Sigma_2 \in Quot_n (\Cal C)$. Finally, since from construction $\Sigma_2$ is a  finite index subgroup of  $\La_2$ we conclude that $\La_2\in Quot_n(\Cal C )$. $\hfill\square$
\vskip 0.04in 

As a common generalization for poly-cyclic, poly-free \cite{Me84}, and poly-hyperbolic groups \cite{DY08} we introduce the notion of poly-$\mathcal C$ groups, where $\mathcal C$ is a fixed class of groups. A group $\G$ is called \emph{poly-$\mathcal C$} if there exists a positive integer $n$ and a chain of groups $\langle e\rangle =\La_0\lhd \La_1\lhd \cdots \lhd \La_{n-1}\lhd \La_n=\G$ such that the quotients $\La_i/\La_{i-1}\in \mathcal C$, for all $0\leq i\leq n-1$. Below we show the poly-$\mathcal C$ groups are closely related with the family $Quot(\mathcal C)$ introduced at the beginning of the section.
\begin{prop} Given a group $\G$, the following conditions are equivalent: 
\begin{enumerate}
\item [a.] There exist a collection of groups  $\Gamma_k$, $1\leq k\leq n$ satisfying $\G=\G_n$ and $\G_1\in \Cal C$ and a collection of surjective homomorphisms $ \pi_k:\G_{k}\rightarrow \G_{k-1}$ satisfying  $ker(\pi_k)\in \Cal C$, for all $2\leq k\leq n$;  
\item [b.]$\G$ is a poly-$\mathcal C$ group.
\end{enumerate}

  \end{prop}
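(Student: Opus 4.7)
The plan is to set up a direct translation between the surjection tower in (a.) and a nested chain of normal subgroups of $\G$, with the factors in $\Cal C$ matching up via the standard isomorphism theorems. No deep ingredient is needed; the argument is essentially bookkeeping.

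For the direction a.$\Rightarrow$b., I would introduce the composed surjections $q_k := \pi_{k+1}\circ\pi_{k+2}\circ\cdots\circ\pi_n : \G\ra \G_k$ for $1\le k\le n-1$ and set $q_n := \mathrm{id}_\G$; each $q_k$ is onto as a composition of surjections. Setting $\La_0 := \langle e\rangle$, $\La_k := \ker(q_{n-k})$ for $1\le k\le n-1$ and $\La_n := \G$ produces a nested family of normal subgroups of $\G$, since the factorization $q_{n-k}=\pi_{n-k+1}\circ q_{n-k+1}$ forces $\ker(q_{n-k+1})\subseteq \ker(q_{n-k})$. Restricting $q_{n-k+1}$ to $\La_k$ yields a surjection onto $\ker(\pi_{n-k+1})$ with kernel $\La_{k-1}$, so the first isomorphism theorem gives $\La_k/\La_{k-1}\cong \ker(\pi_{n-k+1})\in \Cal C$ for $1\le k\le n-1$, while $\La_n/\La_{n-1} = \G/\ker(q_1)\cong \G_1\in \Cal C$. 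This is the required poly-$\Cal C$ chain.

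For the direction b.$\Rightarrow$a., starting from $\langle e\rangle =\La_0\lhd\La_1\lhd\cdots\lhd\La_n=\G$, I would set $\G_k := \G/\La_{n-k}$ for $1\le k\le n$ and take $\pi_k:\G_k\ra\G_{k-1}$ to be the quotient map induced by the inclusion $\La_{n-k}\subseteq \La_{n-k+1}$. Then $\G_n = \G$ and, by the third isomorphism theorem, $\ker(\pi_k)\cong\La_{n-k+1}/\La_{n-k}\in \Cal C$ for $2\le k\le n$; moreover $\G_1 = \G/\La_{n-1}\cong \La_n/\La_{n-1}\in \Cal C$.

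The only point that needs care is indexing together with the observation that the subgroups $\La_k$ produced in a.$\Rightarrow$b. are automatically normal in $\G$ itself (not merely in the next term of the chain). This normality is precisely what is needed in b.$\Rightarrow$a. to form the successive quotients $\G/\La_{n-k}$, so the tower-of-surjections description in (a.) and the poly-$\Cal C$ description in (b.) pin down the same class of groups.
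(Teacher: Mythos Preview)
Your argument follows the paper's proof exactly: in a.$\Rightarrow$b.\ you compose the $\pi_k$'s and take the chain of kernels, and in b.$\Rightarrow$a.\ you pass to the tower of quotients $\G/\La_{n-k}$. The a.$\Rightarrow$b.\ direction is fine. But there is a genuine gap in b.$\Rightarrow$a., and it is precisely the issue you flag in your last paragraph without actually resolving: to form the groups $\G_k := \G/\La_{n-k}$ one needs $\La_{n-k}\lhd\G$, whereas the definition of poly-$\Cal C$ only asks for a \emph{subnormal} chain, i.e., $\La_{k-1}\lhd\La_k$ for each $k$, not $\La_k\lhd\G$. Your remark that the chain produced in a.$\Rightarrow$b.\ has all terms normal in $\G$ is correct but does not help here, since in b.$\Rightarrow$a.\ you must start from an \emph{arbitrary} subnormal chain witnessing poly-$\Cal C$, not the special one you built going the other way.

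The gap is not repairable in general: with $\Cal C$ the class of cyclic groups, $A_4$ is poly-$\Cal C$ via $1\lhd\langle(12)(34)\rangle\lhd V_4\lhd A_4$, yet cannot satisfy (a.), since the only cyclic normal subgroup of $A_4$ is trivial and hence any tower as in (a.) forces $\G_1\cong A_4\notin\Cal C$. (This is exactly the distinction between polycyclic and supersolvable.) The paper's own proof makes the same unjustified passage to $\La_n/\La_{n-k}$. The proposition becomes correct, and both your argument and the paper's go through verbatim, if poly-$\Cal C$ is read as requiring a \emph{normal} series (each $\La_k\lhd\G$) rather than merely a subnormal one.
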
  
  
  \begin{proof}
  First assume condition a.\ above is satisfied. For every integer $2\leq k \leq n$ consider the surjective homomorphism $p_k: \G_n\ra \G_{n-k+1}$ defined by $p_k=\pi_{n-k+2}\circ \pi_{n-k+3}\circ\cdots \circ \pi_{n-1}\circ\pi_n$. This family of homomorphisms naturally gives rise to following chain of normal subgroups: $\langle e\rangle\lhd \ker(p_2)\lhd \ker(p_3)\lhd \cdots \lhd \ker(p_{n-1})\lhd \ker(p_n)\lhd \G_n=\G$. Using the surjectivity of $\pi_k$'s and the isomorphism theorem we see that $\G_n/\ker(p_n)\cong \G_1\in \mathcal C$, $\ker(p_n)/\ker(p_{n-1})\cong \ker(\pi_2)\in \mathcal C$, $\ker(p_{n-1})/\ker(p_{n-2})\cong \ker(\pi_3)\in \mathcal C$, $\ldots$, $\ker(p_3)/\ker(p_{2})\cong \ker(\pi_{n-1})\in \mathcal C$, $ \ker(p_2)=\ker(\pi_n)\in \mathcal C$. This shows that $\G$ is poly-$\mathcal C$.
  
 To see the converse assume $\G$ is poly-$\mathcal C$. Hence one can find a chain of normal subgroups: $\langle e\rangle =\La_0\lhd \La_1\lhd \cdots \lhd \La_{n-1}\lhd \La_n=\G$ such that the quotients $\La_i/\La_{i-1}\in \mathcal C$, for all $0\leq i\leq n-1$. For every $2\leq k\leq n$ consider the surjective groups homomorphism $\pi_k: \La_n/\La_{n-k}\ra \La_n/\La_{n-k+1}$ defined by $\pi_k(x\La_{n-k})=x\La_{n-k+1}$,  for all $x\in \La_n$. From assumptions we have that $\La_n/\La_{n-1}\in \mathcal C$ and furthermore, using the isomorphism theorem, we see that $\ker(\pi_k)\cong (\La_n/\La_{n-k})/(\La_n/\La_{n-k+1})\cong \La_{n-k+1}/\La_{n-k}\in \Cal C$, for all $2\leq k\leq n$. Finally, denoting by $\G_1:=\La_n/\La_{n-1}$ and $\G_k:= \La_n/\La_{n-k}$, for all $2\leq k\leq n$, we see the conditions enumerated in part a.\ are satisfied. \end{proof}

\subsection{Relative hyperbolic groups} Let $\G$ be a group that is hyperbolic relative to $\Cal P=\{P_i \,:\, i\in I\}$ a finite family of infinite, proper, residually finite subgroups. Using very deep methods in geometric group theory Denis Osin was able to prove a powerful algebraic Dehn filling analog for this class of groups. As a consequence, it was shown in \cite[Theorem 1.1]{Os06} that there exist a non-elementary hyperbolic group $H$, a surjective homomorphism $\psi:\G\ra H$, and  finite index, normal subgroups $N_i\lhd P_i$ such that $ker (\psi)$ is the normal closure of $\cup_i N_i$ in $\G$, i.e., $ker(\psi)=\langle\langle \cup_iN_i\rangle \rangle^\G$. More recently, using the concept of very rotating family of subgroups, Dahmani, Guirardel, and Osin \cite[Theorem 7.9]{DGO11} were able to describe more concretely the structure of $ker(\psi)$, as a nontrivial free product. Precisely, they showed that there exist a family of nonempty subsets  $T_i \subset \G$ such that  $ker(\psi) = \ast_{i\in I} (\ast_{\g\in T_i} N_i^\g)$, where  $N_i^\g=\g N_i\g^{-1}$, (see also Osin's argument in \cite[Corollary 5.1]{CIK13}). Summarizing we have the following:
\begin{theorem}[\cite{Os06, DGO11}] Let $\mathcal H$ be the class consisting of all non-elementary hyperbolic groups and all  non-amenable, non-trivial free product of exact groups. Then for any group $\G$ that is hyperbolic relative to a finite family of residually finite, exact, infinite, proper subgroups we have $\G\in Quot_2(\Cal H)$. 
\end{theorem}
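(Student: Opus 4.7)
The plan is to assemble the conclusion directly from the two cited deep results of geometric group theory, producing a two-step quotient tower with the required kernels lying in $\mathcal H$.

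First I would apply \cite[Theorem 1.1]{Os06} to the relatively hyperbolic pair $(\G,\mathcal P)$. Since each peripheral $P_i$ is infinite and residually finite, one may pass to sufficiently deep finite-index normal subgroups $N_i\lhd P_i$ so as to guarantee the existence of a non-elementary hyperbolic group $H$ and a surjection $\psi:\G\twoheadrightarrow H$ with
$$\ker(\psi)\;=\;\langle\langle \textstyle\bigcup_{i\in I} N_i\rangle\rangle^{\G}.$$
This already produces the outer extension: $\G_1:=H$ belongs to $\mathcal H$ as a non-elementary hyperbolic group.

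Next I would invoke \cite[Theorem 7.9]{DGO11} (or, equivalently, the argument in \cite[Corollary 5.1]{CIK13} using very rotating families) to identify $\ker(\psi)$ explicitly as a free product of conjugates of the $N_i$: there exist nonempty subsets $T_i\subset \G$ such that
$$\ker(\psi)\;=\;\ast_{i\in I}\Bigl(\ast_{\gamma\in T_i} N_i^{\gamma}\Bigr),\qquad N_i^{\gamma}=\gamma N_i\gamma^{\inv}.$$
Each factor $N_i^{\gamma}$ is conjugate to $N_i$, hence infinite (as a finite-index subgroup of the infinite group $P_i$) and exact (since exactness passes to subgroups and $P_i$ is exact). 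Choosing the Dehn filling deep enough forces at least two non-trivial factors to appear (for instance, one may arrange $|T_i|\geq 2$ for some $i$, or $|I|\geq 2$), so $\ker(\psi)$ is a non-amenable, non-trivial free product of exact groups. Thus $\ker(\psi)\in\mathcal H$.

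Finally, I would package this into the $Quot_2(\mathcal H)$ format of Section \ref{sec: quot}: set $\G_2:=\G$, $\G_1:=H$, and $\pi_2:=\psi$. Then $\G_1\in\mathcal H$ and $\ker(\pi_2)\in\mathcal H$, while $\G$ is (trivially) commensurable with $\G_2$. This gives $\G\in Quot_2(\mathcal H)$, as required. The only point that requires genuine care---and the only place the argument is not purely formal---is verifying that the resulting free product is non-amenable and non-trivial; this is why one must insist on the residual finiteness of the peripherals and the freedom to choose the filling kernels $N_i$ arbitrarily deep, so that the DGO free-product decomposition has enough factors to be non-elementary.
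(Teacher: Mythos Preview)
Your proposal is correct and follows essentially the same route as the paper: invoke Osin's Dehn filling theorem \cite[Theorem 1.1]{Os06} to get the surjection onto a non-elementary hyperbolic quotient, then invoke \cite[Theorem 7.9]{DGO11} to identify the kernel as a free product of conjugates of the $N_i$, and conclude $\G\in Quot_2(\Cal H)$. The paper's own argument is in fact just the paragraph preceding the theorem statement and is slightly terser than yours; in particular you supply the extra justification (exactness of the factors via passing to subgroups, non-amenability from having at least two infinite free factors) that the paper leaves implicit.
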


\subsection{Mapping class groups} Let $S_{g,k}$ be a connected, compact and orientable surface of genus $g$ with $k$ boundary components.
Throughout the paper, we assume that a surface satisfies these conditions unless otherwise mentioned.
Denote by $\mod(S_{g,k})$ the \textit{mapping class group} of $S_{g, k}$, i.e., the group of isotopy classes of orientation-preserving homeomorphisms from $S_{g,k}$ onto itself, where isotopy may move points of the boundary of $S_{g, k}$.
Using the Birman short exact sequence in combination with the earlier results of Birman and Hilden \cite{BH73} it was shown in \cite[Section 4.3]{CIK13} the following result. 

\begin{theorem}\label{thm-mcg}  If $\Cal F$ denotes the class of all groups commensurable with non-abelian free groups then we have the following:
\begin{enumerate}
\item [(i)] If $g=0$, $k\geq 4$ then $\mod(S_{g,k})\in Quot_{k-3}(\mathcal F)$; 
\item [(ii)]If $g=1$, $k\geq 1$ then $\mod(S_{g,k})\in Quot_{k}(\mathcal F)$;
\item [(iii)]If $g=2$, $k\geq 0$ then  $\mod(S_{g,k})\in Quot_{k+3}(\mathcal F)$.
\end{enumerate}
\end{theorem}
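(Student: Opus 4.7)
The plan is to prove the three cases by induction on $k$, using the Birman exact sequence as the engine of the induction (with Proposition \ref{quot}(1) providing the bookkeeping) and invoking Birman--Hilden to treat the genus two case by reduction to genus zero.

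First I would establish the base cases. For (i), the base $\mod(S_{0,4})$ is classically commensurable with the non-abelian free group $F_2$: the pure mapping class group $\pmod(S_{0,4})$ is isomorphic to $F_2$ and has finite index (namely the index of the mod-action quotient onto the symmetric group $S_4$) inside $\mod(S_{0,4})$. Hence $\mod(S_{0,4})\in \mathcal F=Quot_1(\mathcal F)$. For (ii), the base is $\mod(S_{1,1})\cong SL(2,\mathbb Z)$, which is well-known to be virtually free and therefore also lies in $\mathcal F$. For (iii), the base $\mod(S_{2,0})$ is handled via the Birman--Hilden theorem \cite{BH73}: the hyperelliptic involution generates the center of $\mod(S_{2,0})$, and the quotient by this finite central subgroup is commensurable with $\mod(S_{0,6})$. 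Since case (i) with $k=6$ gives $\mod(S_{0,6})\in Quot_3(\mathcal F)$, and $\mathcal F$ (hence $Quot_3(\mathcal F)$ by Proposition \ref{quot}(5)) is closed under commensurability up to finite kernel, we conclude $\mod(S_{2,0})\in Quot_3(\mathcal F)$.

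Next, for the inductive step I would apply the Birman short exact sequence
\[
1\;\longrightarrow\; \pi_1(S_{g,k})\;\longrightarrow\; \mod(S_{g,k+1})\;\longrightarrow\; \mod(S_{g,k})\;\longrightarrow\; 1,
\]
obtained by forgetting one boundary component (after converting to a marked point). The kernel $\pi_1(S_{g,k})$ is the fundamental group of a surface with non-empty boundary, hence is a free group, and under each of the three sets of hypotheses the rank is at least two (rank $k-1\geq 3$ in case (i), rank $2g+k-1\geq 2$ in cases (ii) and (iii)), so $\pi_1(S_{g,k})\in\mathcal F$. Proposition \ref{quot}(1) then upgrades $\mod(S_{g,k})\in Quot_n(\mathcal F)$ to $\mod(S_{g,k+1})\in Quot_{n+1}(\mathcal F)$, and iterating from the base case gives the stated bounds $k-3$, $k$, and $k+3$.

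The main obstacle is the genus two base case: Birman--Hilden does not provide a single surjective homomorphism from $\mod(S_{2,0})$ onto $\mod(S_{0,6})$ with kernel in $\mathcal F$, but rather an identification of $\mod(S_{2,0})/\langle \iota\rangle$ with a finite-index subgroup of $\mod(S_{0,6})$ (up to the finite symmetry of the branch points). Carefully unwinding this correspondence---and checking that $\mathcal F$ is closed under commensurability up to finite kernel so that Proposition \ref{quot}(5) applies---is the only non-routine step. A minor secondary issue, resolved by the lower bounds on $k$ in the statement, is to make sure that each instance of the Birman sequence we invoke has a kernel which is free of rank at least two rather than cyclic or trivial.
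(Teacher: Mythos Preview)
Your approach is correct and matches the paper's: the paper does not give a self-contained proof here but refers to \cite[Section 4.3]{CIK13}, describing the method as precisely ``the Birman short exact sequence in combination with the earlier results of Birman and Hilden,'' which is exactly your strategy of induction via Birman with the genus two base case handled by Birman--Hilden reduction to $\mod(S_{0,6})$. One small technical point: the Birman exact sequence as you wrote it is most cleanly stated for the \emph{pure} mapping class group $\pmod$ (capping a distinguished boundary component does not descend to $\mod$ when boundary components are permuted), so the induction should run through $\pmod(S_{g,k})$ and then pass to $\mod(S_{g,k})$ via the finite-index inclusion and the commensurability built into the definition of $Quot_n$; this is harmless but worth making explicit.
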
 It follows from \cite{FM11} that, for every positive integer $k$, the central quotient of the braid group with $k$ strands  $\tilde B_k$  can be canonically identified with a finite index subgroup of the mapping class group $\mod(S_{0,k+1})$. Moreover, notice that the central quotient of the pure braid group with $k$ strands $\tilde P_k$ is a normal subgroup of index $k!$ in $\tilde B_k$.  Combining with the above theorem
we have \begin{cor}\label{surjections} For every $k\geq 3$, we have that $\tilde B_k, \tilde P_k\in Quot_{k-2}(\Cal F)$.
\end{cor}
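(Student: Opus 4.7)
The plan is to deduce this directly from Theorem \ref{thm-mcg}(i) together with the two identifications of $\tilde B_k$ and $\tilde P_k$ inside the mapping class group $\mod(S_{0, k+1})$ stated just before the corollary, and then invoke the commensurability stability of the class $Quot_n(\mathcal F)$.

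First I would apply Theorem \ref{thm-mcg}(i) to the sphere with $k+1\geq 4$ punctures. Since the statement there is $\mod(S_{0,m}) \in Quot_{m-3}(\mathcal F)$ for $m\geq 4$, setting $m = k+1$ gives
\[
\mod(S_{0,k+1}) \in Quot_{k-2}(\mathcal F) \quad \text{for every } k\geq 3.
\]
Next I would observe that the class $\mathcal F$ of groups commensurable with non-abelian free groups is, by its very definition, a union of commensurability classes, hence closed under commensurability. Consequently, by Proposition \ref{quot}(3), $Quot_{k-2}(\mathcal F)$ is also closed under commensurability.

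Finally I would combine these with the two inclusions recorded just before the statement: $\tilde B_k$ embeds as a finite index subgroup of $\mod(S_{0,k+1})$, and $\tilde P_k$ is a normal subgroup of index $k!$ in $\tilde B_k$. Each of these means that $\tilde B_k$ is commensurable with $\mod(S_{0,k+1})$ and that $\tilde P_k$ is commensurable with $\tilde B_k$. Applying the commensurability closure of $Quot_{k-2}(\mathcal F)$ twice, first to get $\tilde B_k \in Quot_{k-2}(\mathcal F)$ and then to get $\tilde P_k \in Quot_{k-2}(\mathcal F)$, completes the proof.

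There is essentially no obstacle here: the work has been done in Theorem \ref{thm-mcg} and Proposition \ref{quot}, and the corollary amounts to matching indices ($m = k+1$ so that $m-3 = k-2$) and invoking commensurability closure. The only small thing to double-check is the boundary case $k=3$, where one uses $\mod(S_{0,4}) \in Quot_1(\mathcal F) = \mathcal F$, i.e.\ that $\mod(S_{0,4})$ is itself commensurable with a non-abelian free group, which is exactly what Theorem \ref{thm-mcg}(i) provides for $m=4$.
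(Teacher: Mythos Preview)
Your proof is correct and follows exactly the route the paper intends: apply Theorem \ref{thm-mcg}(i) with $m=k+1$, then pass to the finite index subgroups $\tilde B_k$ and $\tilde P_k$ via the commensurability closure of $Quot_{k-2}(\mathcal F)$ (Proposition \ref{quot}(3)). The paper simply writes ``Combining with the above theorem'' without spelling out these steps, so your argument is a faithful expansion of the same idea.
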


\subsection{Surface braid groups} Throughout this subsection, we let $M=S_{g, b}$ be a surface.
Consider $\pmod(M)$ the \textit{pure mapping class group} of $M$, i.e., the subgroup of $\mod(M)$ that consists of isotopy classes of orientation-preserving homeomorphisms from $M$ onto itself which also preserve each component of $\partial M$ as a set.
Then $\pmod(M)$ is a subgroup in $\mod(M)$ of index $b!$ .

Fix $k$ a positive integer. Denote by $F_k(M)$ the space of ordered $k$ distinct points of $M$ and let $PB_k(M)$ be the fundamental group of $F_k(M)$.
The group $PB_k(M)$ is called the \textit{pure braid group of $k$ strands} on $M$. From definitions we have the equality $PB_1(M)=\pi_1(M)$.
For the basic properties of the group $PB_k(M)$ we refer to \cite{B69a, B69b,B74, PR99}.

We fix $x_1,\ldots, x_k$, mutually distinct points of $M$. 
By \cite[Theorem 3]{FaNe62}, the map from $F_{k+1}(M)$ into $F_k(M)$ sending a point $(t_1,\ldots, t_{k+1})$ of $F_{k+1}(M)$ to $(t_1,\ldots, t_k)$ is a locally trivial fibration which has the fiber $F_1(M\setminus \{ x_1,\ldots, x_k\} )$.
Thus, we have the following exact sequence
\begin{multline}\label{fn-general}
\cdots \ra \pi_2(F_{k+1}(M))\ra \pi_2(F_k(M))\\ 
\ra \pi_1(M\setminus \{ x_1,\ldots, x_k\})\ra PB_{k+1}(M)\ra PB_k(M)\ra 1.
\end{multline}
If $(g, b)\neq (0, 0)$, then by \cite[Corollary 2.2]{FaNe62} we have $\pi_2(F_l(M))=0$, for any positive integer $l$,  and hence (\ref{fn-general}) gives the following short exact sequence
\begin{equation}\label{fn}
1\ra \pi_1(M\setminus \{ x_1,\ldots, x_k\})\ra PB_{k+1}(M)\ra PB_k(M)\ra 1.
\end{equation}

Following the terminology from \cite{PR99}, we say that $M$ is \textit{large} if the group $\pi_1(M)$ is non-elementarily hyperbolic. This is the case if and only if $(g, b)\neq (0, 0), (0, 1), (0, 2), (1, 0)$.

Let $S=S_{g, b+k}$ be a surface and choose $k$ components of $\partial S$. We suppose that $M$ is obtained by filling a disk to each of these $k$ components of $\partial S$.
Thus we get the following Birman exact sequence 
\begin{equation}\label{j-exact}
PB_k(M)\stackrel{j}{\ra} \pmod(S)\ra \pmod(M)\ra 1
\end{equation}
corresponding to the canonical embedding of $S$ into $M$ (\cite[Theorem 1]{B69b}, \cite[Theorem 9.1]{FM11} and \cite[Theorem 2.8.C]{Iv02}).
Denote by $Z_k(M)$ the center of $PB_k(M)$ and notice that by \cite[Corollary 1.2]{B69b} we have $\ker j<Z_k(M)$.
Denote by $\widetilde{PB}_k(M):=PB_k(M)/Z_k(M)$. When $M$ large then \cite[Proposition 1.6]{PR99} implies that the center $Z_k(M)$ is trivial and hence $j$ is an injective homomorphism and also $\widetilde{PB}_k(M)=PB_k(M)$.

\begin{Remark}\label{rem-center}
For any $M$ non-large, we will argue below that $\ker j=Z_k(M)$; in particular, the group $\tilde{PB}_k(M)$ can be identified with a normal subgroup of $\pmod(S)$ through $j$.

To justify our claim notice that when $g=0$ and $b\leq 2$ the group $\pmod(M)$ is trivial. Then using the exact sequence (\ref{j-exact}) we have that $PB_k(M)/\ker j\simeq \pmod(S)$. Since  by \cite[Section 3.4]{FM11} the center of $\pmod(S)$ is trivial, this further implies that $\ker j=Z_k(M)$; in this case we also get that $\widetilde{PB}_k(M)\simeq \pmod(S)$.

Now assume that $g=1$ and $b=0$.
In \cite[Corollary 1.3]{B69b}, two generators of $\ker j$ were described through the presentation of $PB_k(M)$ in \cite[Theorem 5]{B69a}, and it was shown that $\ker j$ is isomorphic to $\mathbb{Z}^2$. Combining this with \cite[Proposition 4.2]{PR99}, we obtain the equality $\ker j=Z_k(M)$.
\end{Remark}

\begin{theorem}\label{thm-sbg}
For every positive integer $k$, the following assertions hold:
\begin{enumerate}
\item[(i)] If $M$ is large and $b\geq 1$, then $PB_k(M)\in Quot_k(\Cal F)$;
\item[(ii)] If $M$ is large and $b=0$, then $PB_k(M)\in Quot_k(\Cal H)$;
\item[(iii)] If $g=0$, $b\leq 2$ and $b+k\geq 4$, then $\widetilde{PB}_k(M)\in Quot_{b+k-3}(\Cal F)$;
\item[(iv)] If $g=1$, $b=0$ and $k\geq 2$, then $\widetilde{PB}_k(M)\in Quot_{k-1}(\Cal F)$.
\end{enumerate}
\end{theorem}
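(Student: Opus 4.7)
The plan is to induct on $k$ in each of the four cases, exploiting the Fadell-Neuwirth sequence (\ref{fn}) in combination with the machinery of Proposition \ref{quot}.

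For parts (i) and (ii) the base case $k=1$ yields $PB_1(M)=\pi_1(M)$: when $M$ is large with $b\geq 1$, the surface $M$ is homotopy equivalent to a wedge of $2g+b-1\geq 2$ circles, so $\pi_1(M)$ is a non-abelian free group and therefore lies in $\mathcal{F}=Quot_1(\mathcal{F})$; when $M$ is large with $b=0$, necessarily $g\geq 2$, and $\pi_1(M)$ is a non-elementary hyperbolic closed surface group, hence in $\mathcal{H}=Quot_1(\mathcal{H})$. For the inductive step, the kernel in (\ref{fn}) is $\pi_1(M\setminus\{x_1,\ldots,x_k\})$, the fundamental group of a surface still having non-empty boundary or punctures; this is a free group of rank $\geq 2$, and hence it lies in $\mathcal{F}\subset\mathcal{H}$. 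Proposition \ref{quot}(1) then upgrades $PB_k(M)\in Quot_k(\mathcal{F})$ (resp.\ $Quot_k(\mathcal{H})$) to $PB_{k+1}(M)\in Quot_{k+1}(\mathcal{F})$ (resp.\ $Quot_{k+1}(\mathcal{H})$).

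For part (iii), Remark \ref{rem-center} gives $\widetilde{PB}_k(M)\simeq\pmod(S_{0,b+k})$ whenever $g=0$ and $b\leq 2$. Since $\pmod(S_{0,b+k})$ is a finite index subgroup of $\mod(S_{0,b+k})$, and Theorem \ref{thm-mcg}(i) places the latter in $Quot_{b+k-3}(\mathcal{F})$ under the hypothesis $b+k\geq 4$, the conclusion follows from the fact that $Quot_{b+k-3}(\mathcal{F})$ is closed under commensurability (Proposition \ref{quot}(3)).

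For part (iv), I induct on $k\geq 2$ with $M=T^2=S_{1,0}$, after passing (\ref{fn}) to central quotients. The key technical point is that the Fadell-Neuwirth projection $\pi\colon PB_{k+1}(T^2)\to PB_k(T^2)$ sends $Z_{k+1}(T^2)$ isomorphically onto $Z_k(T^2)$: both centers equal $\mathbb{Z}^2$ by Remark \ref{rem-center}, and the ``diagonal'' generators of $Z_{k+1}(T^2)$---where all $k+1$ strands simultaneously traverse a meridian or longitude of the torus---project onto the analogous generators of $Z_k(T^2)$, so the induced map is a surjection of $\mathbb{Z}^2$ onto itself, automatically an isomorphism. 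Consequently $\ker(\pi)\cap Z_{k+1}(T^2)=\{e\}$, and a snake lemma argument delivers the induced exact sequence
\begin{equation*}
1\to F_{k+1}\to\widetilde{PB}_{k+1}(T^2)\to\widetilde{PB}_k(T^2)\to 1,
\end{equation*}
where $F_{k+1}\simeq\pi_1(T^2\setminus\{x_1,\ldots,x_k\})$ is free of rank $k+1\geq 2$. Since $\widetilde{PB}_1(T^2)=\pi_1(T^2)/Z_1(T^2)=\{1\}$, the base case $k=2$ yields $\widetilde{PB}_2(T^2)\simeq F_2\in Quot_1(\mathcal{F})$, and the inductive step then follows from Proposition \ref{quot}(1). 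The main obstacle is precisely the center-isomorphism claim above, which rests on the explicit description of $Z_k(T^2)$ from \cite{PR99} invoked in Remark \ref{rem-center}, together with the classical identification of its ``diagonal'' generators.
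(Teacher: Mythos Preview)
Your proof is correct and follows essentially the same approach as the paper. For (i)--(ii) both use induction via the Fadell--Neuwirth sequence (\ref{fn}); for (iii) both invoke Remark \ref{rem-center} to identify $\widetilde{PB}_k(M)$ with $\pmod(S_{0,b+k})$ and then appeal to Theorem \ref{thm-mcg}(i) (the paper leaves the commensurability step implicit, you make it explicit); and for (iv) both pass (\ref{fn}) to central quotients using the fact that $\rho(Z_{k+1}(T^2))=Z_k(T^2)$, which the paper cites from \cite[Corollary 1.3]{B69b} and \cite[Proposition 4.2]{PR99}, while you phrase it as an isomorphism via the diagonal-generator description---an equivalent formulation, since a surjection $\mathbb{Z}^2\to\mathbb{Z}^2$ is automatically injective.
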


\begin{proof}
Assertions (i) and (ii) follow inductively from the short exact sequence (\ref{fn}) and the equality $PB_1(M)=\pi_1(M)$.

If $g=0$, $b\leq 2$, and $b+k\geq 4$ then using the Remark \ref{rem-center} above we have that $\tilde{PB}_k(M)\simeq \pmod(S)$. Thus, assertion (iii) follows from Theorem \ref{thm-mcg} (i).

Suppose that $g=1$, $b=0$ and $k\geq 2$. Let $\rho \colon PB_{k+1}(M)\ra PB_k(M)$ be the surjection from the exact sequence (\ref{fn}).
As mentioned in Remark \ref{rem-center}, two generators of $Z_k(M)$ are described in \cite[Corollary 1.3]{B69b} and \cite[Proposition 4.2]{PR99}.
The homomorphism $\rho$ is induced by the map from $F_{k+1}(M)$ into $F_k(M)$ which sends a point $(t_1,\ldots, t_{k+1})$ of $F_{k+1}(M)$ to $(t_1,\ldots, t_k)$.
It follows from the description of generators of $Z_k(M)$ that $\rho(Z_{k+1}(M))=Z_k(M)$ which further gives rise to a surjection $\tilde{\rho}\colon \widetilde{PB}_{k+1}(M)\ra \widetilde{PB}_k(M)$.

Since $\pi_1(M\setminus \{ x_1,\ldots, x_k\})$ is isomorphic to the free group of rank $k+1$ and its center is trivial, then (\ref{fn}) induces in a canonical way a short exact sequence
\begin{equation}\label{exactcentralquot}1\ra \pi_1(M\setminus \{ x_1,\ldots, x_k\})\ra \widetilde{PB}_{k+1}(M)\ra \widetilde{PB}_k(M)\ra 1.\end{equation}
Since $PB_1(M)=\pi_1(M)\simeq \mathbb{Z}^2$ then the group $\widetilde{PB}_1(M)$ is trivial. Hence, assertion (iv) follows from (\ref{exactcentralquot}), by induction on $k$.
\end{proof}

\begin{Remark}
Next we briefly discuss the exceptional cases for $PB_k(M)$ not covered by Theorem \ref{thm-sbg}.
If $(g, b)=(0, 0)$ and $k\leq 3$, then the first theorem in \cite[Section VI.2]{FV62} implies that $PB_k(M)$ is finite.
If $(g, b)=(0, 1)$, then $PB_1(M)$ is trivial, and $PB_2(M)$ is the Artin pure braid group of two strands and thus is isomorphic to $\mathbb{Z}$ (\cite[Section 9.3]{FM11}).
If $(g, b)=(0, 2)$, then $PB_1(M)\simeq \mathbb{Z}$.
If $(g, b)=(1, 0)$, then $PB_1(M)\simeq \mathbb{Z}^2$.
\end{Remark}

\subsection {The Torelli group and the Johnson kernel}

Let $S=S_{g, k}$ be a surface. A simple closed curve in $S$ is called \textit{essential} in $S$ if it is neither homotopic to a single point of $S$ nor isotopic to a component of $\partial S$. When there is risk no confusion, by a curve in $S$ we mean either an essential simple closed curve in $S$ or its isotopy class. 
A curve $\alpha$ in $S$ is called \textit{separating} in $S$ if $S\setminus \alpha$ is not connected.
Otherwise $\alpha$ is called \textit{non-separating} in $S$. Whether $\alpha$ is separating in $S$ or not depends only on the isotopy class of $\alpha$.
A pair of non-separating curves in $S$, $\{ \beta, \gamma \}$, is called a \textit{bounding pair (BP)} in $S$ if $\beta$ and $\gamma$ are disjoint and non-isotopic and $S\setminus (\beta \cup \gamma)$ is not connected. This condition depends only on the isotopy classes of $\beta$ and $\gamma$.
Given a curve $\alpha$ in $S$, we denote by $t_\alpha \in \pmod(S)$ the \textit{Dehn twist} about $\alpha$.

We define the \textit{Torelli group} $\cali(S)$ to be the group generated by all elements of the form $t_\alpha$ and $t_\beta t_\gamma^{-1}$ with $\alpha$ a separating curve in $S$ and $\{ \beta, \gamma \}$ a BP in $S$.
We define the \textit{Johnson kernel} $\calk(S)$ as the group generated by all $t_\alpha$ with $\alpha$ a separating curve in $S$.
We refer the reader to \cite[Chapter 6]{FM11} for more background of these groups.

When $g\geq 2$ and $k\leq 1$, the Torelli group of $S$ is originally defined as the group of elements of $\mod(S)$ acting on $H_1(S, \mathbb{Z})$ trivially.
Using the results in \cite{Jo79} and \cite{Pow78}, this original group turns out to be equal to the group $\cali(S)$ defined above.
When $g=0$, any curve in $S$ is separating in $S$, and therefore we have $\cali(S)=\calk(S)=\pmod(S)$.

\begin{theorem}
The following assertions hold:
\begin{enumerate}
\item[(i)] If $g=1$ and $k\geq 2$, then $\cali(S), \calk(S)\in Quot_{k-1}(\Cal F)$;
\item[(ii)] If $g=2$ and $k\geq 0$, then $\cali(S), \calk(S)\in Quot_{k+1}(\Cal F)$.
\end{enumerate}
\end{theorem}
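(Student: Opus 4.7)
The plan is to emulate the strategy of Theorems \ref{thm-mcg} and \ref{thm-sbg}: produce Birman-type short exact sequences for $\cali(S)$ and $\calk(S)$ whose kernels lie in $\Cal F$, identify suitable base cases, and then iterate, invoking Proposition \ref{quot}(1) at every step.

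For the inductive step, I would use the Birman exact sequences for the Torelli group and the Johnson kernel: capping a chosen boundary component of $S_{g,k}$ with a disk carrying a marked point yields, after intersecting the classical Birman sequence $1\to \pi_1(S_{g,k-1})\to \pmod(S_{g,k})\to \pmod(S_{g,k-1})\to 1$ with $\cali(S_{g,k})$, respectively $\calk(S_{g,k})$, short exact sequences
\begin{equation*}
1\to [\pi_1(S_{g,k-1}),\pi_1(S_{g,k-1})]\to \cali(S_{g,k})\to \cali(S_{g,k-1})\to 1,
\end{equation*}
\begin{equation*}
1\to [\pi_1(S_{g,k-1}),[\pi_1(S_{g,k-1}),\pi_1(S_{g,k-1})]]\to \calk(S_{g,k})\to \calk(S_{g,k-1})\to 1,
\end{equation*}
worked out by Johnson and Putman. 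As long as $S_{g,k-1}$ has nonempty boundary, $\pi_1(S_{g,k-1})$ is a non-abelian free group, so each kernel, being a commutator subgroup of a non-abelian free group, is itself a non-abelian free group and therefore belongs to $\Cal F$.

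For the base cases I would argue as follows. In part (i), starting at $k=2$: since $\cali(S_{1,1})=\calk(S_{1,1})=\{e\}$ (no essential separating curves or bounding pairs exist on $S_{1,1}$), the exact sequence above identifies $\cali(S_{1,2})$ and $\calk(S_{1,2})$ with a commutator subgroup of $\pi_1(S_{1,1})\simeq F_2$, hence with a non-abelian free group. In part (ii), starting at $k=0$: by Mess's theorem $\cali(S_{2,0})$ is an (infinitely generated) non-abelian free group, and $\calk(S_{2,0})$, sitting as a subgroup of this free group, is itself free; both therefore belong to $\Cal F=Quot_1(\Cal F)$. Iterating the Birman-type sequences $k-2$ times in case (i) and $k$ times in case (ii), and applying Proposition \ref{quot}(1) at each step, produces $\cali(S_{1,k}),\calk(S_{1,k})\in Quot_{k-1}(\Cal F)$ and $\cali(S_{2,k}),\calk(S_{2,k})\in Quot_{k+1}(\Cal F)$.

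The main obstacle is the rigorous verification of the Birman-type exact sequences for $\cali$ and $\calk$, which relies on the Johnson--Morita description of how the image of the point-pushing homomorphism acts on $H_1$ and on the second nilpotent quotient of $\pi_1$. Some additional care is needed in passing from the closed surface $S_{2,0}$ to $S_{2,1}$, where the first capping is applied at a marked point rather than a genuine boundary component, and in ensuring that the definitions of $\cali(S)$ and $\calk(S)$ used in the paper match the classical identifications available in Putman's work; once these points are in place, the induction itself is routine.
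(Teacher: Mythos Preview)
Your overall strategy—restrict the Birman sequence to $\cali$ and to $\calk$ and induct, with base cases $\cali(S_{1,1})=\calk(S_{1,1})=1$ and Mess's theorem for $S_{2,0}$—is exactly the paper's. The gap is in the kernels. The paper defines $\cali(S)$ as the subgroup of $\pmod(S)$ \emph{generated} by separating twists and BP twists, and with this definition the \emph{entire} point-pushing subgroup $j(\pi_1(R))$ already lies in $\cali(S)$: pushing along any simple loop of $R$ yields a separating twist (if the loop is separating) or a BP twist (if not) in $S$; this is the content of \cite[Lemma~4.1.I]{Iv02}. Hence the correct restricted sequence is
\[
1\to \pi_1(R)\to \cali(S)\to \cali(R)\to 1,
\]
with kernel the full $\pi_1(R)$, not $[\pi_1(R),\pi_1(R)]$. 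The commutator-subgroup kernel you quote is what appears for Putman's \emph{partitioned} Torelli groups, which for $k\geq 2$ are strictly smaller than the paper's $\cali(S)$; your sequence therefore fails to be exact at the left term. The correct version is also more elementary—no Johnson--Morita input is needed.

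For $\calk(S)$ the paper proceeds more crudely (and more simply) than you propose: it does not try to identify $N=j(\pi_1(R))\cap\calk(S)$ with any term of the lower central series. It merely shows that $N$ is infinite (simple peripheral loops in $R$ push to separating twists in $S$) and of infinite index in $\pi_1(R)$ (no nonzero power of a BP twist lies in $\calk(S)$, by \cite[Proposition~2.2]{Ki09}), so $N$ is free of infinite rank as an infinite-index subgroup of a free group. This sidesteps the Johnson--Morita computation you flag as the main obstacle. One side effect worth noting: with the paper's kernel, the step $g=2$, $k=1$ has kernel $\pi_1(S_{2,0})$, a closed surface group, which lies in $\Cal H$ but not in $\Cal F$; this is harmless for the applications since $\Cal H\subset\Cal C_{rss}$.
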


\begin{proof}
If $g=2$ and $k=0$, then the equality $\cali(S)=\calk(S)$ holds, and it is further isomorphic to the free group of infinite rank by results in \cite{Me92,BBM10}. The theorem for this case follows.

Suppose that either $g=1$ and $k\geq 2$ or $g=2$ and $k\geq 1$.
Choose a component of $\partial S$, and denote by $R$ the surface obtained by filling up a disk in this chosen component of $\partial S$.
Then by \cite[Theorem 4.6]{FM11} we have the Birman exact sequence
\begin{equation}\label{birman}
1\ra \pi_1(R)\stackrel{j}{\ra}\pmod(S)\stackrel{q}{\ra} \pmod(R)\ra 1
\end{equation}
corresponding to the canonical embedding of $S$ into $R$.
By \cite[Lemma 4.1.I]{Iv02}, through the injection $j$, each of standard generators of $\pi_1(R)$ induces either the element $t_\alpha$ with $\alpha$ a separating curve in $S$, its inverse, or the element $t_\beta t_\gamma^{-1}$ with $\{ \beta, \gamma \}$ a BP in $S$.
It follows that $j(\pi_1(R))<\cali(S)$. Also, by the definition of $\cali(S)$ we have $q(\cali(S))=\cali(R)$. Combining these with (\ref{birman}) we obtain the following short exact sequence
\begin{equation}\label{exacttor}1\ra \pi_1(R)\ra \cali(S)\ra \cali(R)\ra 1.\end{equation}
If $g=1$ and $k=2$, then $\cali(R)$ is trivial because there exist neither a separating curve in $R$ nor a BP in $R$.
Hence, when $g=1$ and $k\geq 2$ we get $\cali(S)\in Quot_{k-1}(\Cal F)$, by using (\ref{exacttor}) and induction on $k$.
Similarly, when $g=2$ and $k\geq 0$ we have $\cali(S)\in Quot_{k+1}(\Cal F)$, by induction on $k$, starting from the result in the first paragraph of the proof.

Suppose again that either $g=1$ and $k\geq 2$ or $g\geq 2$ and $k\geq 1$.
Restricting the the short exact sequence (\ref{birman}), we obtain the following short exact sequence
\begin{equation}\label{exactjohn}1\ra j(\pi_1(R))\cap \calk(S)\ra \calk(S)\ra \calk(R)\ra 1.\end{equation}
Denote by $N=j(\pi_1(R))\cap \calk(S)$. Through the injection $j$, any simple loop in $R$ surrounding exactly one component of $\partial R$ and cutting a cylinder from $R$ induces either the element $t_\alpha$ with $\alpha$ a separating curve in $S$ or its inverse; in particular, it follows that $N$ is infinite.
Also, relying on \cite{Jo80} it was proved in \cite[Proposition 2.2]{Ki09} that for any BP $\{ \beta, \gamma \}$ in $S$, no non-zero power of $t_\beta t_\gamma^{-1}$ lies in $\calk(S)$. This further implies that $N$ has infinite index in $j(\pi_1(R))$ and hence, by \cite[Theorem V.12.5]{DD89}, $N$ is a free group of infinite rank.
Finally,  we obtain  the conclusion of the theorem about $\calk(S)$, by using (\ref{exactjohn}) and induction on $k$.
\end{proof}

%%%%%%%%%%%%%%%%%%%%%%%%%%%%%%%%%%%%%%%%%%%%%%%%%%%%%%%%%%%%%%%%%%%%%%%%%%%%%
%%%%%%%%%%%%%%%%%%%%%%%%%% %%%%%%%%%%%%%%%%%%%%%%%%%%%%%%%%%%%%%%%%%%%%%%%%%%%
%%%%%%%%%%%%%%%%%%%%%%%%%%          CLASS $NC_1$             %%%%%%%%%%%%%%%%%%%%%%%%%%%%%%%%%%%%%%% %%%%%%%%%%%%%%%%%%%%%%%%%%%%%%%%%%%%%%%%%%%%%%%%%%%%%%%%%%%%%%%%%%%%%%%%%%%%%%%%%%%%%%%%%%%%%%%%%%%%%%%%%%%%%%%%%%%%%%%%%%%%%%
%%%%%%%%%%%%%%%%%%%%%%%%%%%%%%%%%%%%%%%%%%%%%%%%%%%%%%%%%%%%%%%%%%%%%%%%%%%%%%
%%%%%%%%%%%%%%%%%%%%%%%%%%%%%%%%%%%%%%%%%%%%%%%%%%%%%%%%%%%%%%%%%%%%%%%%%%%%%

\section{Class $NC_1$}\label{sec: nc1}  

Let $\G$ be a countable discrete group and let $\pi: \G\ra  \Cal O( \Cal H)$ be an orthogonal representation. A map $q: \G\ra \Cal H$ is called a \emph{quasi-cocycle} for $\pi$ if there exists a constant $C\geq 0$ such that \begin{equation}\label{1}\sup_{\g,\la\in\G } \|q(\g\la)-q(\g)-\pi_\g(q(\la))\| \leq C.\end{equation}

The infimum of all constants  $C$ satisfying equation (\ref{1}) is called the \textit{defect} of quasi-cocycle $q$ and is denoted by $D(q)$. When the defect vanishes the quasi-cocycle $q$ is actually a $1$-cocycle with coefficients in $\pi$, \cite{BHV05}. Throughout the paper we will assume, without any loss of generality, that any  quasi-cocycle is \emph{anti-symmetric}, i.e., $q(\g)=-\pi_\g(q(\g^{-1}))$, for all $\g\in \G$. We can make this assumption because in fact every quasi-cocycle is within a bounded distance from an anti-symmetric quasi-cocycle,  \cite{Tho09}. The set of all unbounded, anti-symmetric quasi-cocycles for $\pi$ will be denoted by $\mathcal {QH}_{as}^1(\G, \pi)$. If $\Cal B_{as}(\G,\pi)$ denotes the set of all bounded, anti-symmetric maps  $b:\G \ra \mathcal H$ then we obviously have that  $\mathcal {QH}_{as}^1(\G, \pi)+ \Cal B_{as}(\G,\pi)=\mathcal {QH}_{as}^1(\G, \pi)$ and $(\mathbb R\setminus\{0\})\cdot\mathcal {QH}_{as}^1(\G, \pi)=\mathcal {QH}_{as}^1(\G, \pi)$.

\begin{definition} An orthogonal representation $\pi:\G \ra \Cal O(\Cal H)$ is called \emph{mixing} if for every $\xi,\eta \in \Cal H$ we have $\lim_{\g\ra \infty} \langle\pi_\g (\xi), \eta\rangle =0$.  
\end{definition}
Basic examples are any multiple of the (real) left regular representation,  $\oplus \ell_{\mathbb R}^2(\G)$.  The mixing property is preserved under many basic operations on representations including:  taking sub-representations, restrictions to infinite subgroups, direct sums, tensor products, and inductions to finite index supra-groups.

\begin{definition} A representation $\pi:\G \ra \Cal O(\Cal H)$ is called \emph{weakly}-$\ell^2$ if it is weakly contained in the left regular representation $\ell_{\mathbb R}^2(\G)$. 
\end{definition}
It follows from the definitions that any restriction of a weakly-$\ell^2$ representation to any of its subgroups is again weakly-$\ell^2$. Moreover, the weakly-$\ell^2$ property is preserved under direct sum and under induction to supragroups (this follows from a similar proof with \cite[Theorem F.3.5]{BHV05}). Thus, if  $\G$ is a group and $\{\Sigma_i \}$ is a countable family of amenable subgroups then by above it follows that the multiple $\oplus_i \ell^2_{\mathbb R}(\G/\Sigma_i)$ is weakly-$\ell^2$.

\begin{definition}[Notation 0.1 in \cite{CSU13}] We say that a group $\G$ belongs to class  $NC_1$ if it is non-amenable and there exists a weakly-$\ell^2$, mixing, orthogonal representation $\pi:\G \ra \Cal O(\Cal H)$ such that $\mathcal {QH}_{as}^1(\G, \pi)\neq \emptyset$.\end{definition}

Notice that this class includes the class $\Cal D_{reg}$ introduced by Thom in \cite{Tho09}. In the remaining part of this subsection we underline some basic properties of the class $NC_1$. Some of these have been already discussed in \cite[Section 1]{CSU13} but we will include them here to make the text more self contained. As we will see this class is quite rich, containing large families of groups which are intensively  studied in various areas  of mathematics, especially  topology, geometric group theory, or logic. In addition, we study permanence properties of $NC_1$ under various canonical constructions in group theory.  Many of these properties are either folklore or already appeared in the literature so many of their proofs will be skipped. 
\begin{prop}\label{NCsimple} The following properties hold: 
\begin{enumerate}
\item [a)] If $\Sigma<\G_1, \G_2$ are groups with $\Sigma$ finite and $[\G_1:\Sigma  ]\geq 2, [\G_2: \Sigma]\geq 3$ then $\G_1\ast_\Sigma \G_2 \in NC_1$,\cite{PV09,CP10};
\item [b)]  Given $\Sigma<\G$ groups with $\Sigma$ nontrivial finite, $\G$ infinite and $\Theta: \Sigma \ra \G$ is a monomorphism denote by $HNN(\G,\Sigma, \Theta)$ the corresponding HNN-extension; then $HNN(\G,\Sigma, \Theta)\in NC_1$, \cite{FV10, CP10};  
\item [c)] If a non-amenable group $\G$ acts on a tree with finite stabilizers on edges then $\G \in NC_1$; 
\item [d)]The class $ NC_1$ is closed under taking non-amenable normal subgroups;
\item [e)] If  $\G$ is either a direct product of infinite groups or admits a infinite normal amenable subgroup then $\G\notin NC_1$, \cite{MS04, Po08, Pe06, CS11} .
\end{enumerate}
\end{prop}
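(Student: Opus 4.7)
The plan is to treat parts (a)--(c) uniformly using Bass--Serre theory, handle (d) by a direct mixing argument applied to the quasi-cocycle identity, and reduce (e) to classical $L^2$-rigidity/spectral-gap results in the quasi-cohomology of mixing weakly-$\ell^2$ representations.

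For (a), (b), (c), each hypothesis produces a non-trivial action of $\G$ on a simplicial tree $T$ with finite edge stabilizers (the Bass--Serre tree of the amalgamated product, of the HNN extension, or the tree hypothesized directly). I would take the orthogonal permutation representation $\pi$ on $\ell^2_{\mathbb R}(E(T))\cong \bigoplus_e\ell^2_{\mathbb R}(\G/\mathrm{Stab}(e))$, where the sum ranges over a set of $\G$-orbit representatives of edges. Since each $\mathrm{Stab}(e)$ is finite, each summand is both weakly-$\ell^2$ (quasi-regular representations modulo finite, a fortiori amenable, subgroups are weakly contained in $\ell^2(\G)$) and mixing (the matrix coefficients $\langle \pi_g\delta_{eH},\delta_{eH}\rangle$ are supported on the finite set $H$), hence so is $\pi$. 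The classical Haagerup cocycle $c(g)=\chi_{[v_0,g v_0]}$, the signed characteristic function of the unique geodesic edge-path from a base vertex $v_0$ to $g v_0$, is an honest $1$-cocycle for $\pi$ and in particular a quasi-cocycle with zero defect; its unboundedness follows from non-amenability of $\G$, which in each case prevents $\G$ from fixing a vertex of $T$ and produces loxodromic elements of arbitrarily large translation length.

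For (d), take $\G\in NC_1$ with quasi-cocycle $q\colon\G\to\mathcal H$ into a mixing, weakly-$\ell^2$ orthogonal representation $\pi$, and a non-amenable normal subgroup $\La\lhd\G$. The restriction $\pi|_\La$ remains mixing (since $\La$ is infinite) and weakly-$\ell^2$, and $q|_\La$ is anti-symmetric; only unboundedness of $q|_\La$ needs verifying. I would argue by contradiction: if $\|q(\lambda)\|\le K$ for every $\lambda\in\La$, then normality together with the quasi-cocycle relation applied to $\lambda\gamma=\gamma\cdot(\gamma^{-1}\lambda\gamma)$ yields, for every $\gamma\in\G$ and every $\lambda\in\La$,
\begin{equation*}
\|\pi_\lambda q(\gamma)-q(\gamma)\|\le 2D(q)+2K.
\end{equation*}
Letting $\lambda\to\infty$ in $\La$ and using mixing to obtain $\langle\pi_\lambda q(\gamma),q(\gamma)\rangle\to 0$ forces $\|q(\gamma)\|^2\le \tfrac12(2D(q)+2K)^2$ uniformly in $\gamma$, contradicting the unboundedness of $q$ on $\G$.

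For (e), both situations rely on variants of the mixing argument of (d). In the direct-product case $\G=\G_1\times\G_2$ with both factors infinite, the commutativity $g_1 g_2=g_2 g_1$ for $g_i\in\G_i$ combined with the quasi-cocycle relation yields
\begin{equation*}
\|(\pi_{g_1}-I)q(g_2)-(\pi_{g_2}-I)q(g_1)\|\le 2D(q),
\end{equation*}
which, combined with mixing along each factor and (where needed) the already-established statement (d) applied to the normal factors $\G_i\lhd\G$, forces $q$ to be bounded on each $\G_i$ and therefore on all of $\G$, contradicting unboundedness. In the infinite normal amenable subgroup case, the key fact (going back to \cite{MS04} and extended to quasi-cocycles in \cite{Pe06,CS11}) is that mixing, weakly-$\ell^2$ representations have bounded reduced quasi-cohomology on amenable groups; hence $q|_N$ is bounded and the argument of (d) propagates this bound to all of $\G$. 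The main technical obstacle lies here: the passage from the classical $1$-cocycle arguments to quasi-cocycles requires replacing linearity and clean averaging against F\o lner sets by spectral-gap reasoning that carefully tracks the defect $D(q)$ through every averaging step, and this is precisely what \cite{Pe06,CS11} carry out in detail; I would follow their treatment at this point rather than reprove it.
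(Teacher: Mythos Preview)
Your argument for (d) is correct and is essentially a self-contained unpacking of what the paper does: the paper only proves (d) and does so by observing that $\pi|_\Sigma$ remains mixing and weakly-$\ell^2$ and then invoking \cite[Theorem 2.1]{CSU13} to conclude that $q|_\Sigma$ is unbounded. Your mixing computation \emph{is} that theorem in this special case, so the approaches coincide; yours is simply more explicit. Your treatment of (a)--(c) via the Bass--Serre tree cocycle is likewise the standard argument underlying the paper's citations, and is fine once one reads (c) with the usual convention that the tree action has no global fixed vertex (note that non-amenability alone does not force this: $F_2\times F_2$ acts on a star with trivial edge stabilizers yet lies outside $NC_1$).

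There is, however, a genuine gap in your treatment of (e) for the infinite amenable normal subgroup case. The assertion that ``mixing, weakly-$\ell^2$ representations have bounded reduced quasi-cohomology on amenable groups'' is false: take $N=\mathbb Z$ acting by the left regular representation on $\ell^2(\mathbb Z)$, which is mixing and weakly-$\ell^2$, and the genuine cocycle $c(n)=\sum_{k=0}^{n-1}\delta_k$ has $\|c(n)\|=\sqrt{|n|}$. What \emph{is} true for amenable $N$ is that $H^2_b(N,\mathcal H)=0$, so $q|_N$ differs from an honest cocycle by a bounded map---but that cocycle need not be bounded, so you cannot feed boundedness of $q|_N$ into the (d)-mechanism. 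The correct route (which is what the citations \cite{MS04,Po08,Pe06,CS11} supply) does not attempt to bound $q|_N$ directly; rather it exploits the spectral gap of $\pi$ as a representation of the \emph{non-amenable ambient group} $\G$, together with normality of $N$, in a manner closer to the paper's Lemma \ref{2.4} and the averaging/deformation arguments of \cite{CS11}. Your sketch for the direct-product half of (e) is on firmer ground, though there too the decisive input is spectral gap of $\pi$ restricted to a non-amenable factor (coming from weak-$\ell^2$), not mixing; mixing alone, applied to your displayed commutator identity, does not bound $q|_{\G_i}$ because the term $(\pi_{g_1}-I)q(g_2)$ is not controlled as $g_2\to\infty$.
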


\begin{proof} We will only very briefly justify d).  Let $\G \in NC_1$  and let $\Sigma\lhd \G$ be any non-amenable, normal subgroup. Then there exists a weakly-$\ell^2$, mixing, orthogonal representation $\pi:\G \ra \Cal O(\Cal H)$  and $q \in \mathcal {QH}_{as}^1(\G, \pi)$.
From the previous observations it follows that the restriction $\pi_{|\Sigma} : \Sigma \ra \Cal O(\Cal H)$ is again weakly-$\ell^2$ and mixing. Moreover, since $\Sigma$ is normal in $\G$ then  \cite[Theorem 2.1]{CSU13} implies that the restriction $q_{|\Sigma}\in \mathcal {QH}_{as}^1(\Sigma, \pi_{| \Sigma})$, and hence $\Sigma\in NC_1$.
\end{proof}

\begin{lem}\label{NCfindex} The class $NC_1$ is closed under taking supragroups of finite index.
\end{lem}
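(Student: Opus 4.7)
The plan is to import the $NC_1$ data from $\Gamma$ to $\Lambda$ by induction of representations and of quasi-cocycles---essentially Shapiro's lemma in the quasi-cohomological setting. Fix $\Gamma\leqslant\Lambda$ with $[\Lambda:\Gamma]<\infty$, and let $\pi\colon\Gamma\to\mathcal{O}(\mathcal{H})$ be a mixing, weakly-$\ell^2$ orthogonal representation admitting an unbounded anti-symmetric quasi-cocycle $q\in\mathcal{QH}^1_{as}(\Gamma,\pi)$. Choose coset representatives $s_1=e,s_2,\ldots,s_n$ for $\Lambda/\Gamma$, and for each $\lambda\in\Lambda$ write uniquely $\lambda s_i=s_{\sigma_\lambda(i)}\gamma_i(\lambda)$ with $\gamma_i(\lambda)\in\Gamma$ and $\sigma_\lambda\in\mathrm{Sym}(n)$; one has the standard identities $\sigma_{\lambda\mu}=\sigma_\lambda\sigma_\mu$ and $\gamma_i(\lambda\mu)=\gamma_{\sigma_\mu(i)}(\lambda)\gamma_i(\mu)$.

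I will consider the induced representation $\tilde\pi:=\mathrm{Ind}_\Gamma^\Lambda\pi$ on $\tilde{\mathcal{H}}:=\bigoplus_{i=1}^{n}\mathcal{H}$, acting by $(\tilde\pi_\lambda\xi)_j=\pi_{\gamma_{\sigma_\lambda^{-1}(j)}(\lambda)}(\xi_{\sigma_\lambda^{-1}(j)})$. Orthogonality is immediate, and the paper's earlier remarks ensure that $\tilde\pi$ remains weakly-$\ell^2$ under induction. For mixing, observe that if $\lambda_n\to\infty$ in $\Lambda$ then, for every fixed index $j$, the element $\gamma_{\sigma_{\lambda_n}^{-1}(j)}(\lambda_n)=s_j^{-1}\lambda_n s_{\sigma_{\lambda_n}^{-1}(j)}$ must tend to infinity in $\Gamma$---otherwise $\lambda_n$ would take only finitely many values, since there are only finitely many $s_i$'s---and then mixing of $\pi$ forces every matrix coefficient of $\tilde\pi_{\lambda_n}$ to vanish.

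Next I will induce the quasi-cocycle by defining $\tilde q\colon\Lambda\to\tilde{\mathcal{H}}$ via $\tilde q(\lambda)_j:=q(\gamma_{\sigma_\lambda^{-1}(j)}(\lambda))$. Plugging the identity $\gamma_{\sigma_\mu^{-1}\sigma_\lambda^{-1}(j)}(\lambda\mu)=\gamma_{\sigma_\lambda^{-1}(j)}(\lambda)\,\gamma_{\sigma_\mu^{-1}\sigma_\lambda^{-1}(j)}(\mu)$ into the quasi-cocycle inequality for $q$ and summing over coordinates yields $\|\tilde q(\lambda\mu)-\tilde q(\lambda)-\tilde\pi_\lambda\tilde q(\mu)\|^2\leq nD(q)^2$, so $\tilde q$ is a quasi-cocycle for $\tilde\pi$. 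Since $s_1=e$ gives $\sigma_\gamma(1)=1$ and $\gamma_1(\gamma)=\gamma$ for every $\gamma\in\Gamma$, the first coordinate of $\tilde q|_\Gamma$ equals $q$ itself, so $\tilde q$ is unbounded. Anti-symmetry is free by \cite{Tho09}, or directly by replacing $\tilde q(\lambda)$ with $\tfrac{1}{2}(\tilde q(\lambda)-\tilde\pi_\lambda\tilde q(\lambda^{-1}))$, which differs from $\tilde q$ by a bounded map. Thus $\Lambda\in NC_1$.

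No serious obstacle arises: the argument is pure bookkeeping once the coset/permutation formulas are in place. The single point where finiteness of $[\Lambda:\Gamma]$ is used decisively is the mixing step, where the compactness of the set of coset representatives allows us to push $\lambda_n\to\infty$ to $\gamma(\lambda_n)\to\infty$; if the index were infinite, induced representations would typically fail to be mixing, and this Shapiro-style strategy would not get off the ground.
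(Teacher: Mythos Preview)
Your proof is correct and follows essentially the same approach as the paper's: induce both the representation and the quasi-cocycle from $\Gamma$ to $\Lambda$, then verify mixing, weak-$\ell^2$, the defect bound, and unboundedness coordinate-wise. The only noteworthy difference is that the paper first replaces $\Gamma$ by a finite-index normal subgroup (appealing to Proposition~\ref{NCsimple}\,d)) so that the index set $\Lambda/\Gamma$ is a group, whereas you work directly with the general induced representation via coset representatives and the permutation cocycle $(\sigma_\lambda,\gamma_i(\lambda))$; this is a minor streamlining rather than a genuinely different route.
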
  

\begin{proof} Let $\Omega\in NC_1$ and let $\Omega<\G $ be finite index supragroup. By passing to a finite index subgroup of $\Omega$ we can assume without any loss of generality that $\Omega$ is normal in $\G$. Since $\G$ satisfies $NC_1$ there exists a mixing, weakly-$\ell^2$, orthogonal  representation $\pi: \Omega\ra \Cal O(\Cal H)$  and an unbounded quasi-cocycle $q:\Omega\ra \Cal H$. To get our conclusion we use a construction as in the proof of Kaloujnine-Krasner embedding theorem \cite{KK50}, or more commonly known as the induced representation of $\pi$ from $\Omega$ to $\Gamma$. 

Denote by $\hat \G = \G/\Omega= \{ \hat \la= \Omega \la \,:\, \la\in\G\}$ the (finite) quotient group. Fix $\{ t_{\hat \la} \,:\, \la\in\G\}\subset \G$ a complete set of representatives for the cosets $\Omega$ in $\G$ and notice that $\widehat {t_{\hat \la}}=\hat \la $, for every  $\la \in\G$; hence for every $\g,\la\in \G$, we have $f_\g(\hla):=t_{\hla} \g t^{-1}_{\wh{\la\g}}\in \Omega$.    

Now we define $\tilde \pi: \G \ra \Cal O (\oplus_{\hat\G}\Cal H)$ by letting $\tilde\pi_\g (\oplus_{\hla} \xi_{\hla})= \oplus_{\hla} ( \pi_{f_\g(\hla)}(\xi_{\wh{\la \g}}))$, for every $\g \in \G$ and $\oplus_{\hla} \xi_{\hla}\in \oplus_{\hat\G} \Cal H$.  It is straightforward exercise to check that  $\tilde\pi$ is an orthogonal mixing representation.

Also since $\pi$ is weakly-$\ell^2$ a similar argument with \cite[Theorem F.3.5]{BHV05}) shows $\tilde \pi$ is also weakly-$\ell^2$.

In the remaining part we show that the map $\tilde q: \G \ra \oplus_{\hat \G} \Cal H$ defined by $\tilde q (\g)=\oplus_{\hla} q(f_\g(\hla))$, for every $\g\in \G$, is an unbounded quasi-cocycle, which will conclude the proof.
To see this, fix $\g,\de\in \G$ and using the definitions together with the quasi-cocycle inequality for $q$ we get  
\begin{equation*}\begin{split}
\|\tilde q (\g\de) -\tilde\pi_\g(\tilde q(\de))- \tilde q(\g)\|^2&= \sum_{\hla} \| q(f_{\g\de}(\hla))- \pi_{f_\g(\hla)}(q (f_\de(\wh{\la\g})))- q(f_\g(\hla))\|^2 \\
&=\sum_{\hla} \| q(f_{\g}(\hla)  f_\de(\wh{\la\g}))- \pi_{f_\g(\hla)}(q (f_\de(\wh{\la\g})))- q(f_\g(\hla))\|^2 \\
&\leq \sum_{\hla} D^2(q)= |\hat \G | D^2(q).\end{split}
\end{equation*}
This computation shows that $\tilde q$ is a quasi-cocycle satisfying $D(\tilde q)\leq |\hat \G |^{1/2} D(q)$. 

Finally, picking the section map such that $t_{\hat e}=e$ one can see that $\|\tilde q(\g)\|\geq \|q(\g)\|$ and hence $\tilde q$ is unbounded.
\end{proof}

The following result parallels similar results for the class of acylindrically hyperbolic groups, \cite[Lemma 3.8]{MO13}.
 
\begin{theorem}\label{NCpreserve}  The class $NC_1$ is closed under commensurability. Moreover, every group of the form finite-by-$NC_1$ belongs to $NC_1$. 
\end{theorem}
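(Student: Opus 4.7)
The statement has two halves. Since Lemma \ref{NCfindex} already supplies closure of $NC_1$ under finite-index \emph{supragroups}, the commensurability claim reduces to showing closure under passage to a finite-index \emph{subgroup}, and the ``moreover'' part is a separate statement about extensions $1\to F\to \G\to \G_0\to 1$ with $F$ finite and $\G_0\in NC_1$. In both cases the plan is not to induce or build new representations from scratch, but to transport the existing data $(\pi,q)$ witnessing membership in $NC_1$ by a structurally obvious map (restriction in one direction, composition with the quotient in the other) and verify that the three key properties — mixing, weakly-$\ell^2$, and unbounded anti-symmetric quasi-cocycle — survive.

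For the subgroup direction, let $\G\in NC_1$ with witnessing pair $(\pi:\G\to\Cal O(\Cal H), q\in \mathcal{QH}^1_{as}(\G,\pi))$ and let $\La<\G$ be of finite index. I would take $\pi_{|\La}$ and $q_{|\La}$. Preservation of mixing and of the weakly-$\ell^2$ property under restriction to an infinite subgroup is recorded in the definitions section. The restriction $q_{|\La}$ is trivially an anti-symmetric quasi-cocycle for $\pi_{|\La}$ with the same defect. The only point requiring a word is unboundedness: choosing a finite transversal $T\subset\G$ with $\G=\bigsqcup_{t\in T} t\La$, the quasi-cocycle inequality applied to $\g=t\la$ gives
\begin{equation*}
\|q(\g)\|\;\leq\;\|q(t)\|+\|q(\la)\|+D(q),
\end{equation*}
and since $\{\|q(t)\|\}_{t\in T}$ is a finite set, unboundedness of $q$ on $\G$ forces unboundedness of $q$ on $\La$. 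Combined with Lemma \ref{NCfindex}, this settles the commensurability statement.

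For the finite-by-$NC_1$ part, let $1\to F\to \G\stackrel{p}{\to}\G_0\to 1$ with $|F|<\infty$ and let $(\pi_0,q_0)$ witness $\G_0\in NC_1$. I would set $\pi:=\pi_0\circ p$ and $q:=q_0\circ p$. The quasi-cocycle inequality, anti-symmetry, and unboundedness transfer immediately from $(\pi_0,q_0)$ via surjectivity of $p$. Mixing of $\pi$ follows from the elementary observation that, since $F$ is finite, the preimage under $p$ of any finite set in $\G_0$ is finite, so $\g\to\infty$ in $\G$ forces $p(\g)\to\infty$ in $\G_0$. The only genuinely non-tautological step is verifying that $\pi$ remains weakly-$\ell^2$, and this I would handle by exhibiting an explicit $\G$-equivariant isometric embedding $\ell^2_{\mathbb R}(\G_0)\hookrightarrow \ell^2_{\mathbb R}(\G)$ sending $\delta_{\g F}$ to $|F|^{-1/2}\sum_{f\in F}\delta_{\g f}$, which realizes $\lambda_{\G_0}\circ p$ as a subrepresentation of $\lambda_\G$; then $\pi=\pi_0\circ p\prec \lambda_{\G_0}\circ p\prec \lambda_\G$. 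This weak-containment check is the only substantive point, and it is the step I expect to be (mildly) the main obstacle, though it is routine once the averaging embedding is written down.
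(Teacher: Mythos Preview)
Your proof is correct. For the finite-by-$NC_1$ half your argument coincides with the paper's: both pull back $(\pi_0,q_0)$ along the quotient map $p$, and you simply make explicit the weakly-$\ell^2$ step that the paper asserts without detail.

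For the commensurability half the approaches differ. The paper does not prove closure under arbitrary finite-index subgroups directly. Instead, given a common finite-index subgroup $H<\G_1,\G_2$ with $\G_1\in NC_1$, it passes further down to a finite-index subgroup $H_0<H$ that is \emph{normal} in $\G_1$, applies Proposition~\ref{NCsimple}\,d) (closure under non-amenable normal subgroups, whose proof relies on \cite[Theorem~2.1]{CSU13} to establish unboundedness of the restricted quasi-cocycle), and then climbs back up to $\G_2$ via Lemma~\ref{NCfindex}. Your transversal estimate $\|q(t\la)\|\le \|q(t)\|+\|q(\la)\|+D(q)$ handles unboundedness for \emph{any} finite-index subgroup without normality and without the external input from \cite{CSU13}; it is more elementary and slightly more general. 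The paper's route, by contrast, is economical in context since Proposition~\ref{NCsimple}\,d) and \cite[Theorem~2.1]{CSU13} are invoked repeatedly elsewhere in the paper anyway.
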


\begin{proof}  Assuming $\G_i$ are groups such that  $\G_1 \in NC_1$ and $\G_1$ is commensurable with $\G_2$ we need to show that $\G_2\in NC_1$. From assumptions there exists a group $H$ such that $H<\G_1$, $H<\G_2$ and $[\G_1:H]<\infty$, $[\G_2:H]<\infty $.  Thus one can find a subgroup $H_0<H$ which is normal in $\G_1$ and satisfies $[H:H_0]\leq [\G_1:H_0]<\infty$. Since $\G_1 \in NC_1$, then by part d) in Proposition \ref{NCsimple} we have that $H_0 \in NC_1$. Since $[\G_2:H_0]=[\G_2:H][H:H_0]<\infty$ then Lemma \ref{NCfindex} further implies that $\G_2\in NC_1$, which finishes the proof of the first part of the statement.

For the second part, if $\G$ is a finite-by-$NC_1$ group there exists $\Omega\lhd \G$ a finite normal subgroup such that $\La=\G/\Omega \in NC_1$. Denote by $p: \G \ra \La$ the canonical projection. Thus there exist $\pi :\La \ra \mathcal O(\Cal H)$  a weakly-$\ell^2$, mixing, orthogonal representation and $q\in\mathcal {QH}^1(\La,\pi)$ an unbounded quasi-cocycle.  Notice that  for every $\g\in \G$ and $\xi\in \Cal H$ the formula $\tilde\pi _\g(\xi)=  \pi_{ p(\g)}(\xi)$ defines an orthogonal representation $\tilde \pi: \G\ra \Cal O(\Cal H)$.  Since $\pi$ is mixing and weakly-$\ell^2$ and $\Omega$ is finite it follows that $\tilde\pi$ is also mixing and weakly-$\ell^2$.
Also it is a straightforward exercise to check  that the map $\tilde q: \G\ra \mathcal H$ given by $\tilde q(\g)=q(p(\g))$ for every $\g \in \G$ defines an unbounded quasi-cocycle for $\tilde \pi$ whose defect satisfies $D(\tilde q)\leq D(q)$; thus $\G \in NC_1 $.  \end{proof}
   
The following result  will be used in the sequel.

\begin{lem}\label{2.4}Let $\G$ be a group and let $\pi:\G\ra \mathcal O (\mathcal H)$ be a weakly-$\ell^2$, mixing orthogonal representation such that  $ \mathcal {QH}^1(\G,\pi)\neq \emptyset$. If we fix $q\in \mathcal {QH}^1(\G,\pi)$ then for every infinite subgroup $\Lambda \subset \G$ we have that the centralizer $C_\G (\Lambda)$ is amenable or there exists $C\geq0$ such that $\langle \Lambda , C_\G (\Lambda)\rangle \subseteq B^q_C=\{ \la \in \G \,:\, \|q(\la)\|\leq C\}$. Here, for every subset $H\subseteq \G$, we have denoted by $C_\G(H)=\{ \g\in \G \,:\, \g h=h\g, \text{ for all } h\in H \}$.  
\end{lem}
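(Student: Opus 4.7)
The plan is to prove the contrapositive: assuming $C_\G(\La)$ is non-amenable, I would show that $q$ is uniformly bounded on $L:=\langle \La, C_\G(\La)\rangle$. Since every element of $\La$ commutes with every element of $C_\G(\La)$ by definition of the centralizer, one has $L=\La\cdot C_\G(\La)$, the subgroup $\La$ is central (hence normal) in $L$, and the quasi-cocycle inequality yields $\|q(\la c)\|\leq \|q(\la)\|+\|q(c)\|+D(q)$ for all $\la\in\La$ and $c\in C_\G(\La)$; thus it will suffice to show that both $q|_\La$ and $q|_{C_\G(\La)}$ are bounded.

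The engine is the commutation identity obtained by applying the QC inequality to both orderings of the product $\la c=c\la$ and subtracting: for every $\la\in\La$ and $c\in C_\G(\La)$,
\[
\|(I-\pi_c)q(\la)-(I-\pi_\la)q(c)\|\leq 2D(q),
\]
which gives the two one-sided bounds $\|(I-\pi_c)q(\la)\|\leq 2\|q(c)\|+2D(q)$ and $\|(I-\pi_\la)q(c)\|\leq 2\|q(\la)\|+2D(q)$.

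First I would suppose $q|_\La$ is unbounded. Choosing $\la_n\in\La$ with $\|q(\la_n)\|\to\infty$ and setting $\xi_n:=q(\la_n)/\|q(\la_n)\|$, the first bound above forces $\|(I-\pi_c)\xi_n\|\to 0$ for each $c\in C_\G(\La)$, so $(\xi_n)$ is a sequence of almost invariant unit vectors for $\pi|_{C_\G(\La)}$. Since $\pi\prec \ell^2_{\mathbb R}(\G)$ and the restriction $\ell^2_{\mathbb R}(\G)|_{C_\G(\La)}$ decomposes (via coset representatives) as a multiple of $\ell^2_{\mathbb R}(C_\G(\La))$, the trivial representation of $C_\G(\La)$ would be weakly contained in $\ell^2_{\mathbb R}(C_\G(\La))$, so $C_\G(\La)$ would be amenable by the Hulanicki--Reiter criterion, contradicting our assumption. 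Hence $q|_\La$ is bounded.

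Next, assuming $q|_\La$ is bounded, I would argue that $q|_{C_\G(\La)}$ is bounded as well. Indeed, if $\|q(c_n)\|\to\infty$ for some $c_n\in C_\G(\La)$, the symmetric normalization $\eta_n:=q(c_n)/\|q(c_n)\|$ combined with the second one-sided bound produces a sequence of almost $\La$-invariant unit vectors, so $\La$ would be amenable by the same weakly-$\ell^2$ argument. On the other hand $q|_L$ would be an unbounded quasi-cocycle on $L$ valued in $\pi|_L$, which remains mixing (as a restriction to an infinite subgroup) and weakly-$\ell^2$, so $L$ would belong to $NC_1$. This contradicts Proposition \ref{NCsimple}(e), since $\La$ is an infinite amenable normal (indeed central) subgroup of $L$. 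I expect this second case to be the main obstacle, since a direct construction of almost invariant vectors for $\pi|_{C_\G(\La)}$ fails because elements of $C_\G(\La)$ need not commute with each other; the decisive idea is to reroute through the ambient subgroup $L$ and invoke the structural ban on infinite amenable normal subgroups inside $NC_1$ groups.
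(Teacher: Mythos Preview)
Your argument is correct in substance, and the first half---showing that non-amenability of $C_\G(\La)$ forces $q|_\La$ to be bounded via almost invariant vectors in a weakly-$\ell^2$ representation---is exactly what the paper does (it cites \cite[Proposition~2.6]{CSU13} for this step). One cosmetic slip: $\La$ is \emph{normal} in $L=\La\cdot C_\G(\La)$ but not central unless $\La$ happens to be abelian; you only use normality when invoking Proposition~\ref{NCsimple}(e), so the argument survives, but the parenthetical ``(indeed central)'' should be dropped.

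The second half is where you diverge from the paper. You argue by contradiction: if $q|_{C_\G(\La)}$ were unbounded, the symmetric almost-invariant-vector trick would make $\La$ amenable, while $L$ would land in $NC_1$, contradicting the ban on infinite amenable normal subgroups (Proposition~\ref{NCsimple}(e)). This works, but it is a detour through a structural result whose own proof already encodes the relevant mechanism. The paper instead appeals directly to \cite[Theorem~2.1]{CSU13}: once $q$ is known to be bounded on the infinite subgroup $\La$, the \emph{mixing} hypothesis on $\pi$ forces $q$ to remain bounded on any element normalizing $\La$, hence on all of $\langle \La, C_\G(\La)\rangle$. Concretely, for $c$ normalizing $\La$ and any $\la\in\La$, the quasi-cocycle identity applied to $c\la=(c\la c^{-1})c$ gives $\|(I-\pi_{c\la c^{-1}})q(c)\|\le \|q(\la)\|+\|q(c\la c^{-1})\|+2D(q)$, which is uniformly bounded; letting $\la$ run through an infinite subset of $\La$ and using that $\langle \pi_{c\la c^{-1}}q(c),q(c)\rangle\to 0$ by mixing yields a uniform bound on $\|q(c)\|$. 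This route is shorter, uses the mixing hypothesis explicitly (your argument uses it only implicitly, hidden inside Proposition~\ref{NCsimple}(e)), and avoids the case split on amenability of $\La$.
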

\begin{proof} Assume that the centralizer $C_\G(\La)$  is non-amenable. Then proceeding as in the first part of the proof of \cite[Proposition 2.6]{CSU13} one can find $C_1\geq 0$ such that $\Lambda \subseteq B^q_{C_1}$.  Finally since every element of $C_{\G}(\Lambda)$
 normalizes the subgroup $\Lambda$, $\pi$ is mixing, and $\Lambda$ is infinite, then by the first part  in \cite[Theorem 2.1]{CSU13} one can find $C\geq 0$ such that  $\langle \Lambda ,C_\G(\Lambda)\rangle \subseteq B^q_C$.   \end{proof}
In the end of this subsection we describe some recent important progress in building quasi-cocycles through innovative methods in geometric group theory. Some of the first results emerged from the seminal work of Mineyev \cite{Mi01} and Mineyev, Monod, and Shalom \cite{MMS03}, who showed that every  Gromov hyperbolic group $\G$ admits an unbounded (even proper) quasi-cocycle into a finite multiple of its left-regular representation and hence it is in $\Cal D_{reg}$. This was generalized by Mineyev and Yaman to relatively hyperbolic groups, \cite{MY09}. Hamenst\"adt \cite{Ha} showed that all weakly acylindrical groups, in particular, non-elementary mapping class groups and $Out(\mathbb F_n)$, $n \geq 2$, belong to the class $\Cal D_{reg}$. More recently, Hull and Osin \cite{HO11} and independently Bestvina, Bromberg and Fujiwara \cite{BBF13} where able to find a unified approach to these results by showing that for every group which admits a non-degenerate, hyperbolically
embedded subgroup belongs to the class $\Cal D_{reg}$. Their key results are some beautiful extension theorems on quasi-cohomology. In fact, by very recent work of Osin \cite{Os13} the weak curvature conditions used in both papers, as
well as Hamenst\"adt's weak acylindricity condition, are equivalent to the notion of acylindrical
hyperbolicity formulated by Bowditch, cf. [op. cit.]. Collecting these results together, the following families of groups are known to be acylindrically hyperbolic. In particular, they belong to the class $\Cal D_{reg}$ and thus will be contained in class $NC_1$.
\begin{examples}\label{examplesNC}
The following classes of groups belong to $NC_1$:

\begin{itemize}
\item [a.] Gromov hyperbolic groups \cite{Mi01, MMS03};
\item [b.] Groups which are hyperbolic relative to a family of subgroups as in \cite{MY09,HO11};
\item [c.] The mapping class group $\mod(S_{g,k})$ of a surface $S_{g,k}$ with $3g+k-4\geq 0$, \cite{Ha}; 
\item [d.] $Out(\mathbb F_n)$, $n \geq  2$, \cite{Ha};
\item [e.] Non-virtually cyclic graph products $G\{\G_v\}_{v\in V}$ of non-trivial groups with respect to some finite irreducible graph $G$ with at least two vertices; in particular, non-virtually cyclic right angled artin groups which do not split as a products, \cite{MO13}. 
\end{itemize}
\end{examples}

\begin{prop}Let $M=S_{g, b}$ be a surface and let $k$ be a positive integer. Then, the following assertions hold:
\begin{enumerate}
\item[(i)] If $M$ is large, then $PB_k(M)\in NC_1$.
\item[(ii)] If either $g=0$, $b\leq 2$ and $b+k\geq 4$ or $g=1$, $b=0$ and $k\geq 2$, then $\widetilde{PB}_k(M)\in NC_1$.
\end{enumerate}
\end{prop}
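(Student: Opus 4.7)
The plan is to realize every group in the statement as a non-amenable normal subgroup of a pure mapping class group already in $NC_1$, then invoke Proposition~\ref{NCsimple}~d). The ambient group in each case will be $\pmod(S_{g, b+k})$, obtained via the Birman exact sequence (\ref{j-exact}). Since $\pmod(S_{g, b+k})$ has finite index $(b+k)!$ in $\mod(S_{g, b+k})$, and the latter belongs to $NC_1$ by Examples~\ref{examplesNC}~c) whenever $3g + (b+k) \geq 4$, Theorem~\ref{NCpreserve} will guarantee $\pmod(S_{g, b+k}) \in NC_1$ in every case we consider.

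For assertion (i), when $M$ is large \cite[Proposition 1.6]{PR99} gives $Z_k(M) = 1$; combined with $\ker j \subseteq Z_k(M)$ this forces $j$ to be injective, so (\ref{j-exact}) identifies $PB_k(M)$ with a normal subgroup of $\pmod(S_{g, b+k})$. Since largeness excludes $(g, b) \in \{(0,0), (0,1), (0,2), (1,0)\}$, an immediate case check shows $3g + b + k \geq 4$ in all remaining configurations with $k \geq 1$, so $\pmod(S_{g, b+k}) \in NC_1$. Iterating the projections (\ref{fn}) produces a surjection $PB_k(M) \twoheadrightarrow \pi_1(M)$, and non-elementary hyperbolicity of $\pi_1(M)$ yields non-amenability of $PB_k(M)$. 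Proposition~\ref{NCsimple}~d) then delivers $PB_k(M) \in NC_1$.

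For assertion (ii), the case $g = 0$, $b \leq 2$, $b + k \geq 4$ follows at once from Remark~\ref{rem-center}, which gives an isomorphism $\widetilde{PB}_k(M) \simeq \pmod(S_{0, b+k})$ with a group already shown to be in $NC_1$. In the remaining case $g = 1$, $b = 0$, $k \geq 2$, Remark~\ref{rem-center} identifies $\ker j$ with $Z_k(M)$, hence $j$ descends to an embedding $\widetilde{PB}_k(M) \hookrightarrow \pmod(S_{1, k})$ with normal image; since $3 + k \geq 4$ the ambient group is in $NC_1$. Non-amenability of $\widetilde{PB}_k(M)$ is seen from (\ref{exactcentralquot}), whose kernel $\pi_1(M \setminus \{x_1, \ldots, x_{k-1}\})$ is a free group of rank $k \geq 2$; a final application of Proposition~\ref{NCsimple}~d) completes the argument.

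The only real subtlety, and the natural obstacle to a more direct approach, is that in the $g = 1$, $b = 0$ case the group $PB_k(M)$ itself lies outside $NC_1$ for $k \geq 2$: its centre $Z_k(M) \simeq \mathbb{Z}^2$ is an infinite normal amenable subgroup, which is forbidden by Proposition~\ref{NCsimple}~e). This is precisely why the statement concerns the central quotient $\widetilde{PB}_k(M)$, and it is Remark~\ref{rem-center}'s explicit identification $\ker j = Z_k(M)$ that makes the Birman embedding usable after passing to the quotient.
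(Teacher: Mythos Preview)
Your proof is correct and follows essentially the same strategy as the paper: embed the group in question as a normal subgroup of $\pmod(S_{g,b+k})$ via the Birman sequence, observe that $\mod(S_{g,b+k})\in NC_1$ by Examples~\ref{examplesNC}~c), and then apply Proposition~\ref{NCsimple}~d). You supply more detail than the paper---the explicit check of $3g+b+k\geq 4$, the passage from $\mod$ to $\pmod$ via Theorem~\ref{NCpreserve}, and the verification of non-amenability of the normal subgroup---all of which the paper leaves implicit; your final paragraph on why the central quotient is needed when $(g,b)=(1,0)$ is a nice clarifying remark but not part of the argument proper.
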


\begin{proof}
Let $S=S_{g, b+k}$ be the surface in the exact sequence (\ref{j-exact}).
Suppose that $M$ is large. As mentioned right before Remark \ref{rem-center}, $PB_k(M)$ is isomorphic to a normal subgroup of $\pmod(S)$.
By Proposition \ref{NCsimple} d) and Examples \ref{examplesNC} c., we have $PB_k(M)\in NC_1$ which gives assertion (i).

Next, we  suppose the conditions in assertion (ii) are satisfied. By Remark \ref{rem-center}, we have that $\widetilde{PB}_k(M)$ is isomorphic to a normal subgroup of $\pmod(S)$.
Then Proposition \ref{NCsimple} d) and Examples \ref{examplesNC} c.\ again imply that $\widetilde{PB}_k(M)\in NC_1$,  which gives assertion (ii).
\end{proof}

%%%%%%%%%%%%%%%%%%%%%%%%%%%%%%%%%%%%%%%%%%%%%%%%%%%%%%%%%%%%%%%%%%%%%%%%%%%%%%%%%%%%%
%%%%%%%%%%%%%%%%%%%%%%%%%%%%%%%%%%%%%%%%%%%%%%%%%%%%%%%%%%%%%%%%%%%%%%%%%%%%%%%%%%%%%
%%%%%%%%%%%%%%%%%%%%%                                CLASS  $\mathcal P$                             %%%%%%%%%%%%%%%%%%%%%%%%%%%%%%%%%%%%%
%%%%%%%%%%%%%%%%%%%%%%%%%%%%%%%%%%%%%%%%%%%%%%%%%%%%%%%%%%%%%%%%%%%%%%%%%%%%%%%%%%%%%
%%%%%%%%%%%%%%%%%%%%%%%%%%%%%%%%%%%%%%%%%%%%%%%%%%%%%%%%%%%%%%%%%%%%%%%%%%%%%%%%%%%%%%

\section{The class $NC_1\cap Quot(\mathcal C_{rss})$} 
Since all our main structural results are applicable to von Neumann algebras arising from groups belonging to $ NC_1\cap Quot(\mathcal C_{rss})$, it would be interesting to thoroughly investigate this class of groups. While a complete understanding of this class of groups remains an open problem for future study, following the previous two subsections, we know it includes all groups that are commensurable with the following concrete groups:  

\begin{enumerate}
\item Any infinite, central quotient of the pure braid group $PB_n(S_{g,k})$ of $n$ strands on a surface $S_{g,k}$---in particular, all surface pure braid groups $PB_n(S_{g,k})$, for $n\geq 1$ and either $g=1$ and $k\geq 1$ or $g\geq 2$ and $k\geq 0$;
\item Any mapping class group $\mod(S_{g,k})$, for $0\leq g\leq 2$ and $2g+k\geq 4$;
 \item Any Torelli group $\Cal I (S_{g, k})$ and Johnson kernel $\Cal K (S_{g, k})$, for $g=1, 2$ and $2g+k\geq 4$;
\item Any group that is hyperbolic relative to a finite family of exact, residually finite, infinite, proper subgroups. 
\end{enumerate}

%%%%%%%%%%%%%%%%%%%%%%%%%%%%%%%%%%%%%%%%%%%%%%%%%%%%%%%%%%%%%%%%%%%%%%%%%%%%%%%%%%%%%%%
%%%%%%%%%%%%%%%%%%%%%%%%%%%%%%%%%%%%%%%%%%%%%%%%%%%%%%%%%%%%%%%%%%%%%%%%%%%%%%%%%%%%%%%
%%%%%%%%%%%%%%%%%                    GAUSSIAN DEFORMATIONS ASSOCIATED WITH QUASI-COCYCLES ON GROUPS                                %%%%%%%%%%%%%%%%%%%%%%%%%%%%%%%%%%%%%%%%%%%%%%%%%%%%%%%%%%%%%%%%%%%%%%%%%%%%%%%%%%%%%%%%%%%%%%%%%%%%%%%%%%%%%%%%%%%%%%%%%%%%%%%%%%%%%%%%%%%%%%%%%%%%%%%%%%%%%%%%%%%%%%%%%%%%%%%%%%%%%%%%%%% 
%%%%%%%%%%%%%%%%%%%%%%%%%%%%%%%%%%%%%%%%%%%%%%%%%%%%%%%%%%%%%%%%%%%%%%%%%%%%%%%%%%%%%%%

\section{Gaussian Deformations Arising From Quasi-cocycles on Groups}\label{sec: deformations}

Throughout this section we will assume that  $\G$ is a countable group and $\pi:\G\ra \mathcal O (\mathcal H)$ is an orthogonal representation such that $\Cal{QH}_{as}^1(\G,\pi)\neq \emptyset$. Following \cite{PS09,Si10} to the orthogonal representation $\pi$ one can associate, via the Gaussian construction, a probability measure space $(Y_{\pi},\mu_\pi)$ and a family $\{\omega(\xi) \,:\,\xi\in \mathcal H\} $ of unitaries in $L^\infty(Y_\pi, \mu_\pi)$ such that $L^\infty (Y_\pi, \mu_\pi)$ is generated as a von Neumann algebra by the $\omega(\xi)$'s  and the following relations hold:
\begin{enumerate} \item $\omega(0)=1$, $\omega(\xi_1+\xi_2)=\omega(\xi_1)\omega (\xi_2) $,  $\omega(\xi)^*=\omega(-\xi)$, for all $\xi,\xi_1,\xi_2\in \mathcal H$; 
\item $\tau(\omega (\xi))=\exp(-\|\xi\|^2)$, where $\tau$ is the trace on $L^\infty(Y_{\pi})$ given by integration. \end{enumerate}
Furthermore, there is a p.m.p.\  action $\G\ca^{\hat \pi} (Y_\pi, \mu_\pi) $ called the \emph{Gaussian action associated to $\pi$} which in turn induces a trace preserving action  $\G \ca^{\hat \pi} L^\infty(Y_\pi, \mu_\pi)$ that satisfies $\hat\pi_\g(\omega(\xi) )=\omega (\pi_\g(\xi))$, for all $\g\in \G$ and $\xi\in\mathcal H$.  

Assume that $(N, \tau) $ is a finite von Neumann algebra endowed with a trace $\tau$,   $\G\ca^\sigma (N,\tau)$ is a trace preserving action and denote by $M=N\rtimes_\sigma \G$ the corresponding crossed product von Neumann algebra.  Then the \emph{Gaussian dilation} of $M$ is defined as the crossed product algebra $\tilde M=(N\bar \otimes L^\infty(Y_\pi,\mu_\pi))\rtimes_{\sigma\otimes \hat\pi} \G  $.

Fix $q\in \Cal{QH}_{as}^1(\G,\pi)$ an unbounded quasi-cocycle. Following \cite{Si10} (see also \cite{CS11}) we construct a deformation arising from $q$ through a canonical exponentiation procedure---throughout the text this will be referred to as the \emph{Gaussian deformation associated with $q$}. For every $t\in \mathbb R$ consider the unitary $V_t\in \Cal U ( L^2(N)\bar \otimes L^2(Y_\pi,\mu_\pi) \bar \otimes \ell^2 (\G ))$ defined by the formula
$$V_t(x\otimes y\otimes \delta_\g)= x\otimes \omega (tq(\g))y \otimes \delta_\g,$$
for every $x\in L^2(N)$, $y\in L^2(Y_\pi,\mu_\pi)$, and $\g\in \G$. In \cite{CS11} it was proved that $V_t$ is a strongly continuous one parameter group of unitaries also satisfying  the following transversality property, \cite{Po08}:

\begin{prop} \cite[Lemma 2.8]{CS11}\label{transversality} Under the previous assumptions, for each $t$ and any $\xi \in L^2(M)$, we have 
\begin{equation}2||e^\perp_M \cdot V_t(\xi)||_{2}^2 \geq ||\xi-V_t(\xi)||_{2}^2\geq ||e^\perp_M \cdot V_t(\xi)||_{2}^2 , \end{equation}
where $e_M$ denotes the orthogonal projection of $L^2(\tilde{M})$ onto $L^2(M)$ and $e^\perp_M=1-e_M$.
 \end{prop}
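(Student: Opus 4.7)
My strategy is to reduce the transversality inequalities to a single positivity statement about the operator
\[
A_t \;:=\; \bigl(e_M V_t e_M\bigr)\big|_{L^2(M)} \colon L^2(M)\to L^2(M).
\]
The right-hand inequality $\|\xi - V_t\xi\|_2^2 \ge \|e_M^\perp V_t\xi\|_2^2$ will be essentially automatic, while the left-hand inequality will amount to the operator inequality $A_t^2 \le A_t$; this in turn will follow from $A_t$ being a positive contraction, which is where the Gaussian structure of $V_t$ enters decisively.

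For the right inequality, I would observe that $\xi \in L^2(M)$ forces $e_M^\perp\xi = 0$, so $e_M^\perp(\xi - V_t\xi) = -e_M^\perp V_t\xi$, and the claim follows from $\|e_M^\perp\|\le 1$. For the left inequality, I would expand both sides using unitarity of $V_t$ together with the fact that $e_M$ is a projection:
\[
\|\xi - V_t\xi\|_2^2 = 2\|\xi\|_2^2 - 2\,\mathrm{Re}\,\langle \xi, V_t\xi\rangle, \qquad \|e_M^\perp V_t\xi\|_2^2 = \|\xi\|_2^2 - \|e_M V_t\xi\|_2^2.
\]
Using $\xi = e_M\xi$, the two cross-terms rewrite through $A_t$ as $\langle \xi, V_t\xi\rangle = \langle \xi, A_t\xi\rangle$ and $\|e_M V_t\xi\|_2^2 = \|A_t\xi\|_2^2$. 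Subtracting, the desired bound $2\|e_M^\perp V_t\xi\|_2^2 \ge \|\xi - V_t\xi\|_2^2$ becomes
\[
\mathrm{Re}\,\langle \xi, A_t\xi\rangle \;\ge\; \langle A_t\xi, A_t\xi\rangle \;=\; \langle \xi, A_t^2\xi\rangle,
\]
i.e.\ exactly the operator inequality $A_t^2 \le A_t$ on $L^2(M)$, which holds automatically for any positive contraction.

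The remaining ingredient is therefore to verify that $A_t$ is indeed a positive contraction, and this is the one place where the Gaussian structure is invoked. I would compute $A_t$ on the dense set of simple tensors $x\otimes \delta_\g$ ($x\in N$, $\g\in\G$) spanning $L^2(M)\subset L^2(\tilde M)$; the defining formula for $V_t$, together with the trace identity $\tau(\omega(\eta)) = e^{-\|\eta\|^2}$, gives
\[
A_t(x\otimes \delta_\g) \;=\; \tau(\omega(tq(\g)))\, x\otimes \delta_\g \;=\; e^{-t^2\|q(\g)\|^2}\, x\otimes \delta_\g.
\]
So $A_t$ is diagonal in the $\g$-direction with positive eigenvalues in $(0,1]$; it is therefore self-adjoint, positive, and bounded by $1$. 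Consequently $A_t(1-A_t)\ge 0$, i.e.\ $A_t^2\le A_t$, which completes the plan. The only mildly delicate step is the identification of $A_t$ as a well-defined bounded positive contraction on all of $L^2(M)$, but this is immediate from the uniform scalar bound $0 < e^{-t^2\|q(\g)\|^2}\le 1$ together with $L^2$-normality of the diagonal formula, and so poses no real obstacle.
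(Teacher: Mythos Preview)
Your proof is correct. The paper does not give its own proof of this proposition; it merely cites \cite[Lemma 2.8]{CS11}, so there is nothing to compare against beyond noting that your argument is precisely the standard one. Your reduction of the left inequality to the operator inequality $A_t^2\le A_t$, followed by the explicit diagonalization $A_t(x\otimes 1\otimes\delta_\g)=e^{-t^2\|q(\g)\|^2}(x\otimes 1\otimes\delta_\g)$ showing $0\le A_t\le 1$, is exactly the mechanism behind the transversality property in the Gaussian dilation setting; the right inequality is, as you say, immediate from $e_M^\perp\xi=0$.
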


Notice also that the deformation  $V_t$ satisfies an ``asymptotic bimodularity'' property, a key notion to incorporate the ``bounded equivariance'' of quasi-cocycles into von Neumann algebra context. 

\begin{theorem}\cite[Lemma 2.6]{CS11}\label{almostbimodular}  For every $x,y \in N \rtimes_{\sigma, r} \G$ (the reduced crossed product) we have that 
\begin{equation}\lim_{t\ra 0}\left(\sup_{\xi \in (L^2( M)_1} \|xV_t(\xi)y-V_t(x\xi y)\|_2\right)=0. \end{equation}  
\end{theorem}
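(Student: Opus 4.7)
The plan is to reduce the asymptotic bimodularity to a single uniform estimate on the generators $u_\g$, $u_\la$ and then extend by $C^*$-density inside $N\rtimes_{\sigma,r}\G$. Two preliminary reductions are immediate from the defining formula of $V_t$. First, since $\omega(tq(\mu))$ lives in the middle $L^\infty(Y_\pi)$-factor of $L^2(\tilde M)\cong L^2(N)\bar\otimes L^2(Y_\pi)\bar\otimes\ell^2(\G)$ while $N$ is supported on the first tensor factor, one has the \emph{strict} $N$-bimodularity $aV_t=V_ta$ for every $a\in N$, both as left and right multiplication. Second, every element of $N\rtimes_{\sigma,r}\G$ is an operator-norm limit of finite sums $\sum a_\g u_\g$, so it will suffice to obtain the desired uniform estimate in the case $x=u_\g$, $y=u_\la$ with $\g,\la\in\G$ fixed arbitrarily.

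For this key estimate, write a vector $\xi$ in the unit ball of $L^2(M)\subset L^2(\tilde M)$ as $\xi=\sum_\mu x_\mu\otimes\delta_\mu$ with $x_\mu\in L^2(N)$ and $\sum_\mu\|x_\mu\|_2^2\leq 1$. A direct computation using the formula for $V_t$ and the crossed-product multiplication yields
\begin{equation*}
V_t(u_\g\xi u_\la)-u_\g V_t(\xi)u_\la=\sum_\mu\sigma_\g(x_\mu)\otimes\bigl(\omega(tq(\g\mu\la))-\omega(t\pi_\g q(\mu))\bigr)\otimes\delta_{\g\mu\la}.
\end{equation*}
Combining the Gaussian relations into the identity $\|\omega(\alpha)-\omega(\beta)\|_2^2=2(1-e^{-\|\alpha-\beta\|^2})$, the squared $\|\cdot\|_2$-norm of the difference equals
\begin{equation*}
\sum_\mu\|x_\mu\|_2^2\cdot 2\bigl(1-e^{-t^2\|q(\g\mu\la)-\pi_\g q(\mu)\|^2}\bigr).
\end{equation*}
At this point what I need is a bound on $\|q(\g\mu\la)-\pi_\g q(\mu)\|$ that is \emph{independent of $\mu$}. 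Two applications of the quasi-cocycle inequality~(\ref{1})---one to $\g\cdot(\mu\la)$ and one to $\mu\cdot\la$---together with the orthogonality of $\pi$, show that $q(\g\mu\la)-\pi_\g q(\mu)$ differs from $q(\g)+\pi_{\g\mu}q(\la)$ by a vector of norm at most $2D(q)$. Hence $K_{\g,\la}:=\|q(\g)\|+\|q(\la)\|+2D(q)$ is the required uniform bound, and
\begin{equation*}
\|V_t(u_\g\xi u_\la)-u_\g V_t(\xi)u_\la\|_2\leq\sqrt{2\bigl(1-e^{-t^2 K_{\g,\la}^2}\bigr)},
\end{equation*}
which tends to $0$ as $t\to 0$ uniformly in $\xi$.

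The hard part is conceptual rather than computational. In the strict 1-cocycle case the Gaussian deformation $V_t$ is exactly $M$-bimodular, so the analogous identity holds with no error at all; here the defect produces a residual unitary $\omega(tr(\g,\mu,\la))$ whose phase genuinely depends on $\mu$. The decisive observation---supplied precisely by the definition of a quasi-cocycle---is that although this term varies with $\mu$, the \emph{norm} of its argument is uniformly bounded by $D(q)$, and the Gaussian identity converts this norm bound directly into a uniform $L^2$-estimate. A standard $\varepsilon/3$-argument using $\|V_t\|=1$ on $L^2(\tilde M)$ and the operator-norm density of finite $N$-linear combinations of $\{u_\g\}_{\g\in\G}$ inside $N\rtimes_{\sigma,r}\G$ then extends the estimate from the generators to arbitrary $x,y$. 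The fact that this density must be taken in the operator norm---rather than in a weaker topology---is precisely why the theorem is stated for the \emph{reduced} crossed product rather than for the full von Neumann algebra $M$.
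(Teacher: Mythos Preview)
Your proof is correct and is the standard argument; the paper does not reprove this result but simply cites \cite[Lemma 2.6]{CS11}, where the same computation is carried out. The Fourier expansion, the Gaussian identity $\|\omega(\alpha)-\omega(\beta)\|_2^2=2(1-e^{-\|\alpha-\beta\|^2})$, and the uniform quasi-cocycle bound you use are exactly the ingredients that reappear in the paper's own proof of Lemma~\ref{ineq}, and your observation that strict $N$-bimodularity together with operator-norm density is what forces the hypothesis $x,y\in N\rtimes_{\sigma,r}\G$ is precisely the point.
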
      

In the remaining part of the section we prove a few other basic convergence properties for $V_t$ that will be of essential use in the sequel. Before we proceed to the concrete statements we notice that when $q$ is a $1$-cocycle (i.e. when $D(q)=0$) these properties follow easily and they are already used in one form or another throughout the literature, \cite{Va10}.       
\begin{lem}\label{ineq} There exists a function $f: \mathbb R \ra [0,2^{1/2}]$ satisfying $\lim_{t\ra 0}f(t) =0$ and such that for every $x,y\in M$ and $z\in N\rtimes_{alg} \G$ we have the following inequality:

\begin{equation}\label{3.2'''} \begin{split} & \max \{\|V_t(xy)-xV_t(y)\|_2, \|V_t(yx)-V_t(y)x\|_2\} \\
 &\leq 2\|x\|_\infty \|y-z\|_2+ \|z\|_\infty \left |sup(z)\right |^{1/2}\left (\|V_{2^{1/2} t}(x)-x\|_2 + f(t) \|x\|_2 \right).
\end{split}\end{equation}Here we denoted by $\left |sup(z)\right |$ the cardinality of the support of $z$ in $\G$.  
\end{lem}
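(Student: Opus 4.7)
The plan is to reduce to the finitely supported case via the approximation by $z$, and then split the remaining middle quantity into a ``cocycle'' part (absorbed into $\|V_{\sqrt 2 t}(x)-x\|_2$) and a ``defect'' part (controlled by the uniform bound $D(q)$ on the quasi-cocycle defect). I would first use the triangle inequality,
\[
V_t(xy)-xV_t(y)=V_t(x(y-z))+\bigl[V_t(xz)-xV_t(z)\bigr]+xV_t(z-y).
\]
Since $V_t$ is an $L^2$-isometry, and left/right multiplication by $x$ on $L^2(\tilde M)$ is bounded by $\|x\|_\infty$, the first and third terms together contribute at most $2\|x\|_\infty\|y-z\|_2$. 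The whole game is then to estimate the middle term.

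I would expand $z=\sum_{\gamma\in F}a_\gamma u_\gamma$ with $F=sup(z)$ and $a_\gamma\in N$, and $x=\sum_\lambda x_\lambda u_\lambda$ in $M$. A direct computation in $\tilde M$, using $V_t(\xi u_\gamma)=(\xi\otimes\omega(tq(\gamma)))u_\gamma$ for $\xi\in N$, the covariance $\hat\pi_\mu(\omega(s\xi))=\omega(s\pi_\mu\xi)$, and the quasi-cocycle identity $q(\lambda\gamma)=q(\lambda)+\pi_\lambda q(\gamma)+\epsilon(\lambda,\gamma)$ with $\|\epsilon(\lambda,\gamma)\|\le D(q)$, produces the decomposition
\[
V_t(xz)-xV_t(z)=A+B,\qquad B=(V_t(x)-x)\,V_t(z),
\]
where $A$ is the ``defect'' piece in which each summand carries a factor $\omega(t\epsilon(\lambda,\gamma))-1$ whose $L^2$-norm is bounded by $f(t):=\sqrt{2(1-e^{-t^2 D(q)^2})}$; this function sits in $[0,\sqrt 2]$ and tends to $0$ as $t\to 0$.

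For the defect piece, regrouping $A$ over $\nu=\lambda\gamma$, applying Cauchy--Schwarz to the finite $\gamma$-sum, and using $\|a_\gamma\|_\infty\le\|z\|_\infty$ together with $\sum_\mu\|x_\mu\|_2^2=\|x\|_2^2$, yields $\|A\|_2\le f(t)\,|sup(z)|^{1/2}\|z\|_\infty\|x\|_2$. For the cocycle piece $B$, the finitely supported element $V_t(z)\in\tilde M$ satisfies $\|V_t(z)\|_\infty\le|sup(z)|^{1/2}\|z\|_\infty$, so right-multiplication gives $\|B\|_2\le|sup(z)|^{1/2}\|z\|_\infty\cdot\|V_t(x)-x\|_2$. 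The final ingredient is the spectral comparison $\|V_t(x)-x\|_2\le\|V_{\sqrt 2 t}(x)-x\|_2$ for $x\in M$: the Gaussian identity $\tau(\omega(s\xi))=e^{-s^2\|\xi\|^2}$ yields the explicit formula
\[
\|V_s(x)-x\|_2^2=2\sum_\mu\|x_\mu\|_2^2\bigl(1-e^{-s^2\|q(\mu)\|^2}\bigr),
\]
and the elementary estimate $1-e^{-r}\le 1-e^{-2r}$ for $r\ge 0$ does the rest; combining the pieces delivers the bound on $\|V_t(xy)-xV_t(y)\|_2$.

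The symmetric inequality for $\|V_t(yx)-V_t(y)x\|_2$ follows by taking adjoints: antisymmetry of $q$ implies $V_t(a)^*=V_t(a^*)$ for $a\in M$, so $V_t(yx)-V_t(y)x=(V_t(x^*y^*)-x^*V_t(y^*))^*$, and applying the first bound to $(y^*,x^*,z^*)$ finishes the proof, noting that $|sup(z^*)|=|sup(z)|$, $\|z^*\|_\infty=\|z\|_\infty$, $\|y^*-z^*\|_2=\|y-z\|_2$, and $\|V_{\sqrt 2 t}(x^*)-x^*\|_2=\|V_{\sqrt 2 t}(x)-x\|_2$. The main technical obstacle is the book-keeping of the combinatorial factor $|sup(z)|^{1/2}$: a naive triangle inequality on the $\gamma$-sum loses an additional $|sup(z)|^{1/2}$, so the improvement requires applying Cauchy--Schwarz both on the $\gamma$-variable and on the group direction; everything else reduces to standard Gaussian formulas and the one-line calculus estimate $1-e^{-r}\le 1-e^{-2r}$.
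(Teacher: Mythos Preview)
Your approach is sound and close in spirit to the paper's, though organized differently. The paper does not split $V_t(xz)-xV_t(z)$ into a defect piece $A=V_t(xz)-V_t(x)V_t(z)$ and a cocycle piece $B=(V_t(x)-x)V_t(z)$; instead it applies Cauchy--Schwarz in the $\gamma$-variable first, then bounds $\|q(\lambda\gamma)-\pi_\lambda q(\gamma)\|^2\le 2\|q(\lambda)\|^2+2D(q)^2$ and uses the scalar splitting $1-e^{-a-b}\le(1-e^{-a})+(1-e^{-b})$ to separate the two contributions. Your operator-level $A+B$ decomposition is arguably cleaner (and explains why $\|V_t(x)-x\|_2$ rather than $\|V_{\sqrt 2 t}(x)-x\|_2$ would already suffice), and your adjoint trick for the second inequality, via $V_t(a)^*=V_t(a^*)$ from the antisymmetry of $q$, is a nice shortcut the paper does not spell out.

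There is, however, a gap in your constants. For $A$, your Cauchy--Schwarz in $\gamma$ contributes one factor $|sup(z)|$, but then $\sum_{\nu}\sum_{\gamma\in F}\|x_{\nu\gamma^{-1}}\|_2^2=|sup(z)|\,\|x\|_2^2$ contributes a second; the honest bound is $\|A\|_2\le |sup(z)|\,\|z\|_\infty f(t)\|x\|_2$, not $|sup(z)|^{1/2}$. For $B$, the claim $\|V_t(z)\|_\infty\le |sup(z)|^{1/2}\|z\|_\infty$ is not justified: when $D(q)>0$ the map $V_t$ is not a $*$-homomorphism, and the only immediate estimate is the triangle-inequality bound $\|V_t(z)\|_\infty\le\sum_\gamma\|a_\gamma\|_\infty\le |sup(z)|\,\|z\|_\infty$ (alternatively, estimate $\|B\|_2$ directly by the same Fourier computation as for $A$, which again produces $|sup(z)|$). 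In fact the paper's own proof also arrives at $|sup(z)|$ rather than $|sup(z)|^{1/2}$ in its final display, so the exponent in the stated inequality appears to be a harmless typo; the applications in Proposition~\ref{conv1} and Corollary~\ref{conv'} are completely insensitive to which power of $|sup(z)|$ appears.
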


\begin{proof} Using the triangle inequality and the fact that $V_t$ is a unitary we have that 
 \begin{equation}\label{3.1.1}\begin{split}\|V_t(xy)-xV_t(y)\|_2& \leq \|V_t(x(y-z))\|_2+\|xV_t(y-z)\|_2+\|V_t(xz)-xV_t(z)\|_2 \\
 & \leq 2\|x\|_\infty \|y-z\|_2+\|V_t(xz)-xV_t(z)\|_2.
 \end{split}\end{equation}

\noindent Next, let $z=\sum_\mu z_\mu u_\mu \in N\rtimes_{alg} \G $, with $z_\mu \in N$, and let $x= \sum_\la x_\la u_\la$, with $x_\la \in N$, be the Fourier decompositions of $z$ and $x$ respectively. Thus, using the Cauchy-Schwarz inequality together with the formula for $V_t$, we see that

\begin{equation}\label{3.1.2}\begin{split}
\|V_t(xz)-xV_t(z)\|_2 &= \|\sum_\mu V_t(xz_\mu u_\mu )-xV_t(z_\mu u_\mu)\|_2 \\
&\leq |\sup(z)|^{1/2} \left ( \sum_\mu \|V_t(xz_\mu u_\mu )-xV_t(z_\mu u_\mu)\|^2_2  \right )^{1/2}\\
& =|\sup(z)|^{1/2} \left ( \sum_{\mu,\la }\|x_\la \sigma_\la(z_\mu)\otimes \left (\omega (tq(\la \mu ))-\omega (t\pi_\la(q(\mu))  ) \right ) \|^2_2  \right )^{1/2}\\
& =|\sup(z)|^{1/2} \left ( \sum_{\mu,\la }(2-2 e^{ -t^2 \| q(\la \mu )-\pi_\la(q(\mu) ) \| ^2})\|x_\la \sigma_\la(z_\mu) \|^2_2  \right )^{1/2}\\
\end{split}\end{equation}   
Furthermore, using successively  the basic  inequalities $\|x_\la \sigma_\la(z_\mu) \|_2 \leq \|z_\mu \|_\infty \|x_\la\|_2\leq \|z \|_\infty \|x_\la\|_2$, $\| q(\la \mu )-\pi_\la(q(\mu) ) \| ^2\leq 2\|q(\la)\|^2+2D(q)^2$, and $e^{ - 2t^2 \| q(\la) \| ^2}\leq 1$  we see that the last term above is smaller than 

\begin{equation}\label{3.1.3}
\begin{split} 
&\leq \left |\sup(z)\right | \|z \|_\infty\left ( \sum_{\la}(2-2 e^{ - 2t^2 \| q(\la) \| ^2 -2t^2D(q)^2})\|x_\la \|^2_2  \right )^{1/2}\\
&\leq \left |\sup(z)\right | \|z \|_\infty\left ( \sum_{\la}(2-2 e^{ - 2t^2 \| q(\la) \| ^2})\|x_\la \|^2_2   + \sum_{\la}(2-2 e^{ - 2t^2 D(q)^2})\|x_\la \|^2_2  \right )^{1/2}\\
&= \left |\sup(z)\right | \|z \|_\infty\left (\| V_{2^{1/2} t}(x)- x\|^2_2   + \|x\|^2_2(2-2 e^{ - 2t^2 D(q)^2})  \right )^{1/2}\\
& \leq\left |\sup(z)\right | \|z \|_\infty\left (\| V_{ 2^{1/2} t}(x)- x\|_2   + \|x\|_2 (2-2 e^{ - 2t^2 D(q)^2})^{1/2} \right )
\end{split}
\end{equation}

So, letting $f(t)= (2-2 e^{ - 2t^2 D(q)^2})^{1/2}$, the inequalities (\ref{3.1.1}), (\ref{3.1.2}), and  (\ref{3.1.3}) give the first inequality in (\ref{3.2'''}). The second inequality in (\ref{3.2'''}) follows similarly and we leave the details to the reader.\end{proof}
 
 \begin{prop}\label{conv1} Let $X\subseteq (M)_1$ be a set  and let $F\subseteq M$ be a finite subset and denote by $cu(F)= \{\sum \la_i x_i \,:\, x_i\in F , \la_i \in \mathbb C, |\la_i|\leq 1 \}$. If $V_t\ra \text{Id}$ (or equivalently $e^{\perp}_M\cdot V_t\ra 0$ ) uniformly on $X$ then $V_t\ra \text{Id}$ (or equivalently $e^{\perp}_M\cdot V_t\ra 0$) uniformly on $cu(F)\cdot X\cdot cu(F)$.    
\end{prop}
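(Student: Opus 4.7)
The plan is to reduce by linearity of $V_t$ to the case $a, b \in F$, and then decompose
\[
V_t(axb) - axb = \bigl[V_t(axb) - aV_t(x)b\bigr] + a\bigl[V_t(x) - x\bigr]b
\]
and control the two summands separately, using the asymptotic bimodularity of $V_t$ (Theorem~\ref{almostbimodular}) and the standing hypothesis on $X$, respectively.

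Indeed, every element of $cu(F)$ is a finite linear combination $\sum_i \lambda_i a_i$ with $a_i \in F$ and $|\lambda_i| \leq 1$, so linearity of $V_t$ together with the triangle inequality reduce the claim to showing that $\sup_{x \in X}\|V_t(axb) - axb\|_2 \to 0$ for each fixed $a, b \in F$. The summand $a[V_t(x)-x]b$ is bounded in $\|\cdot\|_2$ by $\|a\|_\infty\|b\|_\infty\|V_t(x)-x\|_2$, which tends to $0$ uniformly on $X$ by hypothesis. For the other summand, when $a, b$ happen to lie in the reduced crossed product $N\rtimes_{\sigma,r}\G$, Theorem~\ref{almostbimodular} directly gives $\sup_{\xi \in (L^2(M))_1}\|V_t(a\xi b) - aV_t(\xi)b\|_2 \to 0$, and this covers $X \subseteq (M)_1 \subseteq (L^2(M))_1$.

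For arbitrary $a, b \in M$, given $\varepsilon > 0$ use Kaplansky density to pick $a', b' \in N\rtimes_{alg}\G$ with $\|a'\|_\infty \leq \|a\|_\infty$, $\|b'\|_\infty \leq \|b\|_\infty$, and $\|a-a'\|_2, \|b-b'\|_2 < \varepsilon$, and expand
\begin{multline*}
V_t(axb) - aV_t(x)b = V_t\bigl((a-a')xb + a'x(b-b')\bigr) \\ + \bigl[V_t(a'xb')-a'V_t(x)b'\bigr] + (a'-a)V_t(x)b + a'V_t(x)(b'-b).
\end{multline*}
The middle bracket is controlled uniformly on $X$ by Theorem~\ref{almostbimodular} applied to $a', b'$, while the first piece has $\|\cdot\|_2$-norm at most $(\|a\|_\infty + \|b\|_\infty)\varepsilon$ (using that $V_t$ is an isometry and $\|x\|_\infty \leq 1$). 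The delicate terms are $(a'-a)V_t(x)b$ and its mirror $a'V_t(x)(b'-b)$, since $V_t(x) \notin M$ in general and $\|a'-a\|_\infty$ is not small; the crucial trick is the split $V_t(x) = x + (V_t(x) - x)$, which gives $(a'-a)V_t(x)b = (a'-a)xb + (a'-a)(V_t(x)-x)b$. The first term has norm at most $\|a-a'\|_2\|b\|_\infty < \varepsilon\|b\|_\infty$ via the bimodule bound $\|y\xi\|_2 \leq \|y\|_2\|\xi\|_\infty$ applied with $\xi = xb \in M$, while the second is dominated by $2\|a\|_\infty\|b\|_\infty\|V_t(x)-x\|_2$, which vanishes uniformly on $X$ by hypothesis. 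Letting $\varepsilon \downarrow 0$ then finishes the proof.

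The main obstacle is precisely the final approximation step: since $a' \approx a$ only in $\|\cdot\|_2$, the error term $(a'-a)V_t(x)b$ cannot be controlled by operator-norm estimates alone, and one must invoke the uniform-convergence hypothesis on $X$ to replace $V_t(x)$ with $x$ modulo a uniformly small correction. This is the only place where the hypothesis on $X$ enters in a non-trivial way, beyond the trivial bound on the $a(V_t(x)-x)b$ piece.
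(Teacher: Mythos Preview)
Your proof is correct, but it takes a genuinely different route from the paper's. The paper reduces to one-sided multiplication $X\cdot\{y\}$ and invokes Lemma~\ref{ineq}, a quantitative estimate obtained by direct Fourier computation which bounds $\|V_t(xy)-xV_t(y)\|_2$ in terms of $\|V_{\sqrt{2}t}(x)-x\|_2$; thus the uniform-convergence hypothesis on $X$ is what controls the bimodularity defect itself. You instead treat both sides at once and appeal to the soft asymptotic bimodularity of Theorem~\ref{almostbimodular}, which makes the bimodularity defect vanish independently of $X$; the hypothesis on $X$ then only enters through the $a(V_t(x)-x)b$ piece and, via your splitting trick $V_t(x)=x+(V_t(x)-x)$, through the Kaplansky approximation errors $(a'-a)V_t(x)b$ and $a'V_t(x)(b'-b)$. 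Your argument is arguably more conceptual, since it recycles Theorem~\ref{almostbimodular} rather than re-doing the quasi-cocycle calculation, while the paper's approach via Lemma~\ref{ineq} is more self-contained and yields explicit quantitative bounds (which is why the paper later remarks that everything survives when $q$ is merely an $s$-array). Both are valid; just be aware that your phrasing ``letting $\varepsilon\downarrow 0$'' is shorthand for the standard two-step choice (first pick $\varepsilon$ to fix $a',b'$, then pick $t$ small using both the hypothesis on $X$ and Theorem~\ref{almostbimodular} for those specific $a',b'$).
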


\begin{proof} To show our statement, it is sufficient to prove that $V_t\ra \text{Id}$ uniformly on $X\cdot cu(F)$ and on $cu(F)\cdot X$. We will only show the former convergence as the later will follow in a similar manner. Moreover, using the triangle inequality it suffices to show that  $V_t\ra \text{Id}$ uniformly on $X\cdot cu(F)$ only when $F$ is a singleton, thus assume that $F=\{ y\}$. 

Fix $\ve>0$ and by Kaplansky's density theorem let $y_\ve \in N \rtimes_{alg} \G$ with $\|y_\ve\|_\infty\leq \|y\|_\infty$ such that 
\begin{equation}\label{3.3''''}\|y-y_\ve\|_2\leq \ve/6.\end{equation} 
Also, since $V_t\ra \text{Id}$ uniformly on $X\cup \{y\}$ and since $f(t)\ra 0$ as $t\ra 0$, one can find $t_\ve>0$ such that for all $|t|\leq t_\ve$ and all $x\in X$ we simultaneously have  
\begin{equation}\label{3.2''''} \begin{split}\| V_t(x)-x\|_2& \leq \ve/ \left ( 6 \|y_\ve\|_\infty \left |sup(y_\ve)\right |^{1/2}\right ); \\ 
\| V_t(y)-y\|_2& \leq \ve/3; and \\ 
f(t) &\leq \ve / \left (6 \|y_\ve\|_\infty \left |sup(y_e)\right |^{1/2}\right ).
\end{split}\end{equation}

\noindent Thus, applying the triangle inequality and then using the first inequality in (\ref{3.2'''}) in the previous lemma together with (\ref{3.2''''}), (\ref{3.3''''}), and $\|x\|_2\leq \|x\|_\infty \leq 1$ we see that for every $|t|\leq t_{\ve}/ 2^{1/2}$ we have

\begin{equation*}
\begin{split}
\|V_t(xy)-xy\|_2&\leq \|V_t(xy)-xV_t(y)\|_2+ \|V_t(y)-y\|_2\\
& \leq 2\|y-y_\ve\|_2+ \|y_\ve\|_\infty \left |sup(y_\ve)\right |^{1/2}\left (\|V_{2^{1/2} t}(x)-x\|_2 + f(t) \right)+ \|V_t(y)-y\|_2\\
&\leq \ve/3 +   \ve/6+\ve/6 + \ve/3=\ve, \end{split}
\end{equation*}
which finishes the proof. \end{proof}

\begin{cor}\label{conv'}  Let $X\subseteq (M)_1$ be a subset and for each $i=1,2$ let $a_i\in M_+$ be positive elements, $f_i \in M$ be projections, and $\la_i>0$ be scalars satisfying $f_i a_i=a_i f_i\geq \la_i f_i$. If $e^{\perp}_M\cdot V_t \ra 0$ uniformly on $a_1Xa_2$ then $e^{\perp}_M\cdot V_t \ra 0$ uniformly on $f_1Xf_2$. \end{cor}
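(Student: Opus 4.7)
The plan is to reduce the claim to Proposition \ref{conv1} (or more precisely Proposition 3.3 in the excerpt) after inverting the positive elements $a_i f_i$ inside the corners $f_i M f_i$.

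First I would show that each $a_i$ admits a one-sided ``partial inverse'' $c_i \in f_i M f_i$ producing $f_i$. Since $a_i$ commutes with $f_i$, the element $b_i := a_i f_i = f_i a_i f_i$ belongs to the corner $f_i M f_i$, and the hypothesis $a_i f_i \geq \la_i f_i$ says $b_i \geq \la_i f_i$ inside $f_i M f_i$. Therefore $b_i$ is invertible in that corner, with inverse $c_i \in f_i M f_i$ satisfying $\|c_i\|_\infty \leq \la_i^{-1}$ and $c_i b_i = b_i c_i = f_i$. Using $f_i c_i = c_i = c_i f_i$ together with the commutation of $a_i$ and $f_i$, a short calculation gives
\begin{equation*}
c_i a_i = a_i c_i = f_i,
\end{equation*}
and hence for every $x \in X$,
\begin{equation*}
f_1 x f_2 = c_1 (a_1 x a_2) c_2.
\end{equation*}

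Next I would feed this factorization into Proposition \ref{conv1}. Set $Y := a_1 X a_2$ and $K := 1 + \|a_1\|_\infty \|a_2\|_\infty$, so that $X' := K^{-1} Y \subseteq (M)_1$; clearly $e^\perp_M \cdot V_t \to 0$ uniformly on $Y$ if and only if it does on $X'$, since the two sets differ only by the positive scalar $K$. Applying Proposition \ref{conv1} to $X'$ with the finite subset $F := \{c_1, c_2\} \subseteq M$, I obtain that $e^\perp_M \cdot V_t \to 0$ uniformly on $cu(F) \cdot X' \cdot cu(F)$. Since both $c_1$ and $c_2$ lie in $cu(F)$, this set contains $c_1 X' c_2 = K^{-1} c_1 Y c_2 \supseteq K^{-1} f_1 X f_2$. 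Scaling back by $K$, uniform convergence of $e^\perp_M \cdot V_t$ to $0$ on $f_1 X f_2$ follows.

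I do not expect any substantial obstacle here: the only subtlety is that the hypothesis of Proposition \ref{conv1} asks for $X \subseteq (M)_1$, whereas $Y = a_1 X a_2$ need only be norm-bounded; this is handled by the trivial rescaling above. The core of the argument is the algebraic identity $c_i a_i = f_i$, which reduces the corollary to a direct invocation of the two-sided propagation lemma.
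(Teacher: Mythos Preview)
Your proof is correct and follows essentially the same route as the paper: construct partial inverses $c_i$ of $a_i$ on the range of $f_i$ (the paper calls them $x_i$) so that $f_1 x f_2 = c_1(a_1 x a_2)c_2$, then invoke Proposition~\ref{conv1}. The paper's version is terser and additionally cites the transversality property (Proposition~\ref{transversality}) to pass between the $V_t\to\mathrm{Id}$ and $e_M^\perp\cdot V_t\to 0$ formulations, but since that equivalence is already built into the statement of Proposition~\ref{conv1}, your direct invocation is equally valid; your explicit rescaling to land in $(M)_1$ is a detail the paper leaves implicit.
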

\begin{proof} Since $f_i a_i=a_i f_i\geq \la_i f_i$ then one can find $x_i\in M$ such that $a_i x_i=f_i$. The statement follows then from the transversality property (Proposition \ref{transversality}) and Proposition \ref{conv1}. \end{proof}

\begin{Remark}\label{s-array} More generally, instead of being a quasi-cocycle assume that $q$ is an \emph{$s$-array} on $\G$, i.e., there exists a linear function $\psi: \mathbb R_+\ra \mathbb R_+$ such that $\| q(\la \g)-\pi_\la(q(\g))\|\leq \psi(\|q(\la)\|)$, for all $\la,\g\in\G$. Then, one can easily check that a version of Lemma \ref{ineq} still holds, with some constants in the inequality (\ref{3.2'''}) and the formula for $f$ there slightly modified. Consequently, Proposition \ref{conv1} and Corollary \ref{conv'} will also hold in this case.    
\end{Remark}

\begin{lem}\label{decay0} Let  $\Sigma <\G$ be  a subgroup and let $a \in N\rtimes \G$ be an element satisfying $0\leq a\leq 1$ and put $P=N\rtimes \G$. Then 
for every $\e>0$ there exists $t_\ve>0$ such that for all $|t|\leq t_\ve$ and all $x\in (P)_1$ we have 
\begin{equation}\label{3.2'}
\| e_M^\perp \cdot V_t  ( a' x)\|^2_2\leq \| e_M^\perp \cdot V_t  ( a x)\|^2_2 + \ve, 
\end{equation}
where $a'=E_P(a)$.
\end{lem}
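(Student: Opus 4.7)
The plan is a direct Fourier computation in the $\G$-direction. I will interpret $P=N\rtimes\Sigma$ (as written with $P=N\rtimes\G$ the conclusion is vacuous), so that $a'=E_P(a)$ is a genuine conditional expectation onto a proper subalgebra; under this reading the inequality will in fact hold without the $\ve$ slack, uniformly in $t\in\mathbb{R}$ and $x\in(P)_1$.

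First I would establish a spectral formula for the off-$M$ part of $V_t$. Expanding $\xi\in L^2(M)$ as $\xi=\sum_{g\in\G}\xi_g u_g$ with $\xi_g\in L^2(N)$, the definition of $V_t$ gives $V_t(\xi)=\sum_g \xi_g\otimes\omega(tq(g))\otimes\delta_g$, while $e_M$ acts in the $L^2(Y_\pi)$-factor as the expectation $\eta\mapsto\tau(\eta)1$. Using the Gaussian identities $\tau(\omega(\eta))=e^{-\|\eta\|^2}$ and $\|\omega(\eta)\|_2=1$, Pythagoras yields
\begin{equation*}
\|e_M^\perp V_t(\xi)\|_2^2=\sum_{g\in\G}\phi_t(g)\,\|\xi_g\|_2^2,\qquad \phi_t(g):=1-e^{-2t^2\|q(g)\|^2}\in[0,1].
\end{equation*}

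Next I would identify the Fourier coefficients of $a'x$. Since $a'\in N\rtimes\Sigma$ and $x\in(N\rtimes\Sigma)_1$, the product $a'x$ lies in $N\rtimes\Sigma$, forcing $(a'x)_g=0$ for every $g\notin\Sigma$. Bimodularity of $E_P$, valid because $x\in P$, gives $a'x=E_P(a)x=E_P(ax)$, and therefore $(a'x)_g=(ax)_g$ for every $g\in\Sigma$. Substituting back into the spectral formula and discarding the non-negative tail over $\G\setminus\Sigma$ yields
\begin{equation*}
\|e_M^\perp V_t(a'x)\|_2^2=\sum_{g\in\Sigma}\phi_t(g)\,\|(ax)_g\|_2^2\leq\sum_{g\in\G}\phi_t(g)\,\|(ax)_g\|_2^2=\|e_M^\perp V_t(ax)\|_2^2.
\end{equation*}

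There is no serious obstacle. The only bookkeeping point is to justify coefficientwise $L^2$-convergence of the Fourier series for $ax$, which is immediate from $\sum_g\|(ax)_g\|_2^2=\|ax\|_2^2\leq 1$ whenever $a,x\in(M)_1$; the hypothesis $0\le a\le 1$, which gives $\|a\|\le 1$, is used only here. If one wishes a proof that genuinely needs the slack $\ve$ (for instance because the intended statement concerns $x\in(M)_1$ rather than $x\in(P)_1$), a natural alternative is to combine the almost-bimodularity of $V_t$ (Theorem \ref{almostbimodular}) with Lemma \ref{ineq} after approximating $a$ in $\|\cdot\|_2$ by a finitely-supported $z\in N\rtimes_{\mathrm{alg}}\G$, with $\ve$ absorbing the $\|a-z\|_2$-error; this route is also viable and proceeds by the same spectral bookkeeping once one writes $V_t(ax)\approx aV_t(x)$ and replaces $a$ by $a'$ on the $\Sigma$-supported part of $V_t(x)$.
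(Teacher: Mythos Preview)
Your proof is correct and in fact sharper than the paper's: you obtain the inequality with no $\ve$ slack, uniformly in $t\in\mathbb R$ and $x\in(P)_1$. Your reading $P=N\rtimes\Sigma$ is the intended one; the paper's own argument confirms this by introducing $\tilde P=(N\bar\otimes L^\infty(Y_\pi))\rtimes\Sigma$ and using the commuting square $E_{\tilde P}\circ E_M=E_M\circ E_{\tilde P}=E_P$.

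The approaches differ genuinely. You work directly at the Fourier level: the spectral formula $\|e_M^\perp V_t(\xi)\|_2^2=\sum_{g\in\G}\phi_t(g)\,\|\xi_g\|_2^2$ reduces the question to the coefficientwise identity $(a'x)_g=(ax)_g$ for $g\in\Sigma$, which follows from the $P$-bimodularity of $E_P$ and the fact that $E_P$ simply truncates the Fourier support to $\Sigma$. The paper instead argues operator-algebraically: approximate $a$ by $a_\ve\in N\rtimes_{\mathrm{alg}}\G$ via Kaplansky, apply the asymptotic bimodularity of $V_t$ (Theorem~\ref{almostbimodular}) to pull $a_\ve$ outside $e_M^\perp V_t(\cdot)$, and then use the commuting-square identity $e_{\tilde P}a_\ve e_{\tilde P}=E_P(a_\ve)e_{\tilde P}$ together with $e_{\tilde P}\cdot e_M^\perp\cdot V_t(x)=e_M^\perp\cdot V_t(x)$ for $x\in P$ to replace $a_\ve$ by $E_P(a_\ve)$; the approximation and bimodularity errors produce the $\ve$. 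This is precisely the alternative route you sketch in your final paragraph. Your direct computation is more elementary and yields the clean inequality $\|e_M^\perp V_t(a'x)\|_2\le\|e_M^\perp V_t(ax)\|_2$; the paper's route has the virtue of staying at the level of operator manipulations (commuting squares, asymptotic bimodularity) consistent with the surrounding lemmas, and would adapt to conditional expectations that are not given by a Fourier truncation.
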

\begin{proof} As before consider the Gaussian dilations  $\tilde M = (N\bar\otimes L^\infty(X_\pi,\mu_\pi))\rtimes \G$ and by $\tilde P = (N\bar\otimes L^\infty(X_\pi,\mu_\pi)) \rtimes \Sigma$ and we notice the following commuting square condition $E_{\tilde P}\circ E_M= E_{M}\circ E_{\tilde P}=E_P$. 

Fix $\ve>0$. By Kaplansky density theorem there exists $a_\ve\in N\rtimes_{alg}\G $ with $\|a_\ve \|_\infty\leq 1$ such that $\|a-a_\ve\|_2\leq \ve/4$. 
Also, using Theorem \ref{almostbimodular}, there exists $t_\ve>0$ such that for all $x\in (P)_1$ and all $0\leq |t|\leq t_\ve$ we have 
\begin{equation}\label{3.0} \begin{split}
& \| e^\perp_M\cdot V_t(a_\ve x) - a_{\ve}e ^\perp_M\cdot V_t( x)\|_2\leq \ve/4, \text{ and} \\
&\| e^\perp_M\cdot V_t( a'_{\ve} x) - a'_{\ve}e ^\perp_M\cdot V_t( x)\|_2\leq \ve/4,
\end{split}\end{equation}   
where we have denoted by $a'_\ve=E_P(a_\ve)$. Thus, inequalities (\ref{3.0}) in combination with relations $e_{\tilde P}\cdot e_M^\perp \cdot V_t  (  x)=e_M^\perp \cdot V_t  (  x)$, for all $x \in P$, $e_{\tilde P} a_\ve e_{\tilde P}=E_P(a_\ve) e_{\tilde P}$, and the basic inequality $\|e_{\tilde P} (\xi)\|_2 \leq \|\xi\|_2$, for all $\xi\in L^2(\tilde M)$, show that for every $|t|\leq t_\ve$ we have 
\begin{equation}\label{3.1}\begin{split}
\| e_M^\perp \cdot V_t  ( a x)\|_2& \geq \| e_M^\perp \cdot V_t  ( a_\ve x)\|_2 - \ve/4\\
& \geq \| a_\ve e_M^\perp \cdot V_t  (  x)\|_2-\ve/2\\
& =\| a_\ve  \cdot e_{\tilde P}\cdot e_M^\perp \cdot V_t  (  x)\|_2 -\ve/2 \\
& \geq \| e_{\tilde P}\cdot a_\ve \cdot  e_{\tilde P}e_M^\perp \cdot V_t  (  x)\|_2-\ve/2 \\
& = \| E_{P}(a_\ve) e_M^\perp \cdot V_t  (  x)\|_2 -\ve/2\\
& \geq \| e_M^\perp \cdot V_t  (  E_{P}(a_\ve)  x)\|_2 - {3\ve}/4\\
& \geq \| e_M^\perp \cdot V_t  (  E_{P}(a)  x)\|_2 - \ve.
\end{split}\end{equation}
Since $\ve>0$ was arbitrary then inequality (\ref{3.2'}) follows by squaring (\ref{3.1}). 
\end{proof}

\begin{lem}\label{decay6}Let  $\Sigma <\G$ be  a subgroup and let $p\in N\rtimes \Sigma=:P$ be a nonzero projection. Assume for every $\ve>0$ there exists $t^1_\ve>0$ such that for all $|t|\leq t^1_\ve$ and all $x\in \left(pP p \right )_1$ we have 
\begin{equation}\label{8.3}
\| e_M^\perp \cdot V_t  (x)\|^2_2\leq  \ve. 
\end{equation}
Then  there exists a nonzero element $r\in \mathcal Z(P)$ with $0< r\leq 1$ such that for every $\ve>0$ one can find $t_\ve>0$ such that for all $|t|\leq t_\ve$ and all $y\in \left(P\right )_1$ we have  
\begin{equation}\label{8.3'}
\| e_M^\perp \cdot V_t  (y r )\|^2_2\leq \ve.
\end{equation} 
\end{lem}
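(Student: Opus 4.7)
My plan is to exhibit a nonzero central projection $r \in \mathcal Z(P)$ such that the uniform convergence of $V_t$ to the identity on $(pPp)_1$ extends to uniform convergence on the ``row'' $(P)_1 r$. The key idea is to use Murray--von Neumann comparison theory to cover a central cutdown of $1$ by finitely many translates of $p$, and then deploy Proposition \ref{conv1} to transport the convergence.

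First, I would construct the central projection $r$ via the center-valued trace. Let $T\colon P\to \mathcal Z(P)$ denote the (unique, trace-preserving) conditional expectation onto the center. Then $T(p)\in \mathcal Z(P)_+$ is nonzero since $\tau(T(p))=\tau(p)>0$. I choose $\delta>0$ small enough that $r:=\mathbf{1}_{[\delta,1]}(T(p))\neq 0$. By functional calculus, $r\in\mathcal Z(P)$ is a nonzero projection with $0<r\leq 1$ and $T(pr)=T(p)r\geq \delta r$.

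Next, I would use comparison theory inside the finite von Neumann algebra $Pr$. Setting $n:=\lceil 1/\delta\rceil$, the inequality $T(r)=r\leq n\delta r\leq nT(pr)$ gives (by the classical comparison theorem for projections in finite von Neumann algebras, applied to $Pr$ with its center-valued trace) partial isometries $v_1,\ldots,v_n\in Pr$ satisfying $v_k^*v_k\leq pr\leq p$ and $\sum_{k=1}^n v_kv_k^*=r$. Then for any $y\in (P)_1$, using that $r$ is a central projection (so $yr=ryr$) and that $v_k=v_kp$, $v_l^*=pv_l^*$,
\begin{equation*}
yr \;=\; \sum_{k,l=1}^{n} v_k\bigl(v_k^* y v_l\bigr) v_l^*, \qquad v_k^* y v_l = p(v_k^*y v_l)p \in (pPp)_1.
\end{equation*}

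Finally, I would invoke Proposition \ref{conv1} with the finite set $F=\{v_1,v_1^*,\ldots,v_n,v_n^*\}\subseteq M$ and $X=(pPp)_1$: the hypothesis immediately yields uniform convergence of $V_t\to\mathrm{Id}$ on each product set $v_k(pPp)_1 v_l^*$. The triangle inequality applied to the displayed $n^2$-term decomposition then delivers uniform convergence on $\{yr:y\in (P)_1\}$, and the transversality estimate from Proposition \ref{transversality} converts this into the desired bound $\|e_M^\perp \cdot V_t(yr)\|_2^2\leq \varepsilon$. The only step I expect to require care is the comparison-theory step producing $v_1,\ldots,v_n$; but this is a standard consequence of the center-valued comparison theorem in the reduced algebra $Pr$, so the argument should go through cleanly.
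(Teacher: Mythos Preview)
Your argument is correct and takes a genuinely different route from the paper's own proof.

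The paper proceeds by a two-stage convexity/averaging argument: it forms the $\|\cdot\|_2$-closed convex hull of the conjugates $\{zpz^*:z\in\mathcal G\}$, identifies the minimizer as $q=ctr_P(p)$, and then uses the asymptotic bimodularity of $V_t$ (Theorem~\ref{almostbimodular}) together with a convex approximation of $q$ to transfer the uniform convergence from $(pPp)_1$ to $\{qxp:x\in(P)_1\}$. A spectral cut $q_\delta$ of $q$ then yields convergence on $\{q_\delta xp:x\in(P)_1\}$, and the whole averaging step is run a second time (with $r'p=q_\delta p$ in place of $p$) to finally obtain convergence on $(P)_1 r$ for $r=ctr_P(q_\delta p)$.

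Your approach bypasses both averaging passes by invoking the center-valued comparison theorem once: from $T(r)\le nT(pr)$ you extract elements $v_1,\dots,v_n\in Pr$ with $\sum_k v_kv_k^*=r$ and $v_k^*v_k\le p$, decompose $yr=\sum_{k,l}v_k(v_k^*yv_l)v_l^*$ with $v_k^*yv_l\in(pPp)_1$, and then a single application of Proposition~\ref{conv1} (which already packages the asymptotic bimodularity) finishes the job. This is shorter and yields a central \emph{projection} $r$ rather than merely a positive central element. The only cosmetic point is that the $v_k$ produced by the $M_n(P)$ comparison need not individually be partial isometries; but all you use is $\|v_k\|\le 1$ and $v_k=v_kp$, both of which follow from $v_k^*v_k\le p$, so the argument goes through unchanged (and, if desired, an inductive refinement of the comparison does yield genuine partial isometries).

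In short: the paper's proof is self-contained within the deformation toolkit developed there, while yours imports one classical structural fact about finite von Neumann algebras and in return avoids the iteration entirely.
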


 \begin{proof} First we claim that there exists $r' \in \mathcal Z(P)$ a projection such that $r'p\neq 0$ and for every $\ve_1>0$ there exists $t_{\ve_1} >0$ such that for all $|t|\leq t_{\ve_1}$, $y\in \left(P\right )_1$ we have  
\begin{equation}\label{8.3.1'}
\| e_M^\perp \cdot V_t  (y r'p )\|^2_2\leq  \ve_1.
\end{equation} 

To see this we use a standard convexity argument \cite{CP10}, and \cite{Va10}. Notice that the set  $\mathcal G:= \{ n u_\g\,:\, n\in \mathcal U(N),\g\in\Sigma\}$ forms a dense subgroup of $P$. Consider the closed convex hull $\mathcal K(p)= \overline{conv}^{ \|\cdot \|_2} \{ zpz^*\,:\,z\in \mathcal G \}$ and denote by $q \in K(p)$ the unique element of  minimal $\|\cdot \|_2$. Notice that since $\| zqz^*\|_2=\|q\|_2$ and $zqz^* \in \mathcal K(p)$ for every $z\in\mathcal G$ then by uniqueness we have that $ zqz^*=q$  for every $z\in \mathcal G$. Thus $q\in P\cap \mathcal G'= \mathcal Z(P)$ and since $ctr_P(\mathcal K(p))= ctr_P(p)$ we conclude that $q= ctr_P(p)\in \mathcal K(p)$. 

Fix $1\geq  \ve>0$. Thus from the definition of $\mathcal K(p)$ one can find a finite subset   $ \mathcal F_{\ve} \subset \mathcal G$ and $0<c_s, s\in\mathcal F_{\ve}$ with $\sum_{s\in \mathcal F_{\ve}}  c_s=1$ such that
 \begin{equation}\label{3.3.4'}
 \| q-\sum_{s\in\mathcal F_{\ve}}  c_s s  p s^* \|_2\leq \ve/8.
 \end{equation}  
 
 Moreover, by Proposition \ref{transversality} and Theorem \ref{almostbimodular} above there exists $t^1_\ve>0$ such that for all $|t|\leq t^1_\ve$, $s\in \mathcal F_{\ve}$, and $x\in (P)_1$ we have  
 \begin{equation}\label{3.3.5'}
 \begin{split}
 \| s e^\perp _M\cdot V_t ( p s^* x p )-e^\perp _M\cdot V_t ( s p s^* x p)\|_2\leq \ve/8. 
  \end{split}
 \end{equation}
 
 Also from (\ref{8.3}) there exists  $t^2_\ve>0$ such that for all $|t|\leq t^2_\ve$, $s\in \mathcal F_\ve$,  and $x\in (P)_1$ we have
 \begin{equation}\label{8.3'''}
\| e_M^\perp \cdot V_t  ( p s^* x p)\|^2_2\leq  \ve/8. 
\end{equation}

 Using the triangle inequality together with $\|V_t(\xi)\|_2\leq \|\xi\|_2$, for $\xi\in L^2(M)$, (\ref{3.3.4'}),  (\ref{3.3.5'}), (\ref{8.3'''}) and Cauchy-Schwarz inequality for every $x\in (P)_1$ and $|t|\leq \min\{ t^1_\ve,t^2_\ve\}$, we have 
 \begin{equation*}\begin{split}
\| e^\perp _M\cdot V_t ( q x p)\|^2_2 & \leq \left(\| e^\perp _M\cdot V_t ( (q- \sum_{s\in\mathcal F_{\ve}}  c_s s  p s^*) x p)\|_2 +\sum_{s\in\mathcal F_{\ve}} c_s\|e^\perp _M\cdot V_t (   s  p s^* x p) \|_2\right)^2\\
&\leq  (\| e^\perp _M\cdot V_t ( (q- \sum_{s\in\mathcal F_{\ve}}  c_s s  p s^*) x p)\|_2 +\\
&\quad + \sum_{s\in\mathcal F_{\ve}} c_s\|e^\perp _M\cdot V_t (   s  p s^* x p) - se^\perp _M\cdot V_t (    p s^* x p) \|_2 +\\
& \quad +  \sum_{s\in\mathcal F_{\ve}} c_s \|e^\perp _M\cdot V_t (   p s^* x  p ) \|_2)^2\\
& \leq \left(\ve/8+ \ve/8+ \sum_{s\in\mathcal F_{\ve}} c_s \|e^\perp _M\cdot V_t (   p s^* x  p ) \|_2\right)^2\\
& \leq \ve^2/16 + \ve/2+ \sum_{s\in \mathcal F_{\ve}}c_s  \|  e^\perp _M\cdot V_t ( p s^* x p)\|^2_2\\
& \leq \ve^2/16 +\ve/2 + \ve/8\leq \ve
\end{split}
\end{equation*}  
 Altogether, we have obtained that for every $\ve>0$ there exists $t_\ve>0$ such that for all $|t|\leq t_\ve$ and all $x\in(P)_1$ we have 
 \begin{equation}\label{8.3.11}
 \|e^\perp_M\cdot V_t(qxp)\|^2_2 \leq \ve.
   \end{equation}
   
 For every $\mu>0$ we denote by $q_\mu$ the spectral projection of $q$ corresponding to the interval $(\mu,\infty)$ and notice that $q_\mu \nearrow r:=supp(q)$ increasingly in SO- topology, as $\mu\searrow 0$. Thus there exists $\delta>0$ such that $q_\delta q\neq 0$ and since $q=ctr_P(p)$ it follows that $q_\delta p\neq 0$. Moreover, since $q_\delta q  \geq \delta q_\delta$ there exists an element $x_\delta \in \mathcal Z(P)$ such that $qq_\delta x_\delta =q_\delta$ and $\|x_\delta\|_\infty \leq \delta^{-1}$.
 
 Fix $\ve_1>0$. From (\ref{8.3.11}) there exists $t_{\ve_1}$ such that for all $|t|\leq t_{\ve_1}$ and all $x\in(P)_1$ we have
 
  \begin{equation*}\label{8.3.11'}
 \|e^\perp_M\cdot V_t(qxp)\|^2_2 \leq \ve_1 \delta^2.
   \end{equation*}
   If in this inequality we let $x= \delta q_\delta x_\delta y $ for arbitrary  $y \in (P)_1$ then we get our claim for $r'=q_\delta$.

  Finally, we notice that our claim together with same averaging argument as used in its proof further implies (\ref{8.3'}), where $r= ctr_P(r'p)$. In fact, the arguments presented above apply verbatim and we leave the details to the reader.  \end{proof} 

\begin{lem}\label{decay7} Assume the previous notations and let $N\rtimes \Sigma=:P$. Assume that there exists a finite subgroup $\Omega<\Sigma$ such that $\mathcal Z(P)\subseteq N\rtimes \Omega$. Also suppose there is a nonzero element $0\leq r\leq 1$ with $r\in\mathcal Z(P)$ such that for every $\ve>0$ there exists $t_\ve>0$ such that for all $|t|\leq t_\ve$ and all $x\in \left(P\right )_1$ we have 
\begin{equation}\label{3.3}
\| e_M^\perp \cdot V_t  (x r)\|^2_2\leq  \ve. 
\end{equation}
Then the quasi-cocycle $q$ is bounded on $\Sigma$.   
\end{lem}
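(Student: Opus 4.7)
My plan is to reduce the hypothesis to a decay statement about a spectral projection of $r$ whose Fourier support lies in the finite set $\Omega$, and then extract the bound on $q$ by computing the deformation on a single matrix coefficient $u_\sigma$ for each $\sigma \in \Sigma$.

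First, since $r\in \mathcal Z(P)$ is nonzero, positive, and bounded by $1$, I would choose $\delta>0$ small enough so that the spectral projection $r_\delta := 1_{(\delta,\infty)}(r)$ is nonzero. Note that $r_\delta\in \mathcal Z(P)\cap (N\rtimes\Omega)$ because $N\rtimes\Omega$ is a von Neumann subalgebra containing $r$. Since $r\cdot r_\delta =r_\delta\cdot r\ge \delta r_\delta$, I can apply Corollary~\ref{conv'} with $a_1=f_1=1$, $a_2=r$, $f_2=r_\delta$, and $X=(P)_1$, to upgrade the hypothesis (\ref{3.3}) to the same decay statement with $r_\delta$ replacing $r$: for every $\ve>0$ there exists $t_\ve>0$ such that $\|e_M^\perp\cdot V_t(xr_\delta)\|_2^2\le \ve$ for every $x\in (P)_1$ and $|t|\le t_\ve$. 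By the transversality inequality (Proposition~\ref{transversality}), this yields
\begin{equation*}
\|xr_\delta - V_t(xr_\delta)\|_2^2 \le 2\|e_M^\perp\cdot V_t(xr_\delta)\|_2^2 \le 2\ve.
\end{equation*}

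Next, I will specialize to $x=u_\sigma$ for $\sigma\in\Sigma$. Writing the finite Fourier expansion $r_\delta=\sum_{\omega\in\Omega}c_\omega u_\omega$ with $c_\omega\in N$ and using $u_\sigma c_\omega = \sigma_\sigma(c_\omega)u_\sigma$, one gets $u_\sigma r_\delta=\sum_\omega \sigma_\sigma(c_\omega) u_{\sigma\omega}$. Applying the defining formula $V_t(n u_\gamma)=n\otimes \omega(tq(\gamma))\otimes u_\gamma$ in the Gaussian dilation and the identity $\|1-\omega(\xi)\|_2^2=2(1-e^{-\|\xi\|^2})$, together with the fact that $\sigma_\sigma$ is trace-preserving, I obtain
\begin{equation*}
\|u_\sigma r_\delta - V_t(u_\sigma r_\delta)\|_2^2 = \sum_{\omega\in\Omega} \|c_\omega\|_2^2 \cdot 2\bigl(1-e^{-t^2\|q(\sigma\omega)\|^2}\bigr) \le 2\ve.
\end{equation*}

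Finally, I observe that the coefficient $c_e=E_N(r_\delta)$ satisfies $\tau(c_e)=\tau(r_\delta)>0$, hence $a:=\|c_e\|_2^2>0$ is a strictly positive constant that does \emph{not} depend on $\sigma$. Keeping only the $\omega=e$ summand, I get $a\cdot 2(1-e^{-t^2\|q(\sigma)\|^2})\le 2\ve$ for every $\sigma\in\Sigma$ and $|t|\le t_\ve$. Fixing any $\ve<a$ (say $\ve=a/2$) and taking $t=t_{a/2}$, this gives the uniform bound $\|q(\sigma)\|\le \sqrt{\log 2}/t_{a/2}$ for every $\sigma\in\Sigma$, showing $q$ is bounded on $\Sigma$. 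The main obstacle that this strategy overcomes is the apparent $\sigma$-dependence in the Fourier coefficients of $u_\sigma r_\delta$; centrality of $r_\delta$ and the trace-preservation of the action neutralize this, leaving a clean constant $\|c_e\|_2$ that allows a single-term extraction to conclude.
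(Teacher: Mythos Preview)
Your proof is correct, and the core extraction step is actually slightly slicker than the paper's. The paper works directly with $r$ (no spectral projection, no transversality), computes $\|e_M^\perp\cdot V_t(u_\g r)\|_2^2=\sum_{\omega\in\Omega}(1-e^{-t^2\|q(\g\omega)\|^2})\|r_\omega\|_2^2$, and then uses a pigeonhole argument: since the weighted sum is at most $\ve\|r\|_2^2$, for each $\g$ there is \emph{some} $\omega\in\Omega$ (depending on $\g$) with $1-e^{-t^2\|q(\g\omega)\|^2}\le\ve$, and then the quasi-cocycle inequality together with finiteness of $\Omega$ transfers the bound to $\|q(\g)\|$. Your idea of isolating the $\omega=e$ term instead is more direct, since $\tau(E_N(r))=\tau(r)>0$ already guarantees a nonzero coefficient there, and it bypasses both the pigeonhole step and the quasi-cocycle correction.

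Two minor remarks. First, the passage to the spectral projection $r_\delta$ via Corollary~\ref{conv'} is unnecessary: the same argument goes through verbatim with $r$ in place of $r_\delta$, because $\tau(E_N(r))=\tau(r)>0$ already gives $\|E_N(r)\|_2>0$. Second, you could also skip the transversality step and compute $\|e_M^\perp\cdot V_t(u_\sigma r)\|_2^2$ directly (as the paper does); this yields the same sum up to harmless constants. Neither point affects correctness.
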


\begin{proof} Fix $0<\ve<1$ and applying the assumption for $\ve\|r\|^2_2$ it follows that there exists $t> 0$ such that for all $\g\in \Sigma$ we have 
\begin{equation}\label{9.1.1}
\| e_M^\perp \cdot V_t  (u_\g r)\|^2_2\leq \ve \|r\|^2_2.
\end{equation}
Consider $r=\sum_{\omega \in\Omega}  r_\omega u_\omega$ be the its Fourier decomposition. Thus using the definition of $V_t$ we see that (\ref{9.1.1}) is equivalent to
\begin{equation*}\label{9.1.2}
\sum_{\omega\in\Omega}  (1-e^{-t^2\|q(\g\omega)\|^2}) \|r_\omega\|^2_2\leq\ve \sum_{\omega\in\Omega} \|r_\omega\|^2_2.
\end{equation*}
In particular, this inequality implies that for every $\g \in \Sigma$ there exists $\omega \in \Omega$ such that  $1-e^{-t^2\|q(\g\omega)\|^2}\leq  \ve$ or equivalently
$\|q(\g\omega)\|\leq (\ln(2/(1-\ve))^{1/2})/t$. Using the quasi-cocycle relation this, further entails that $\|q(\g)\|\leq D(q)+\|q(\omega)\| + (\ln(2/(1-\ve))^{1/2})/t$. Altogether, we have 
$\|q(\g)\|\leq D(q)+\sup_{\omega \in\Omega }\|q(\omega)\| + (\ln(2/(1-\ve))^{1/2})/t$, for all $\g\in \Sigma$, and since $\Omega$ is finite it follows that $q$ is bounded on $\Sigma$.  \end{proof}

\begin{Remark} Finally, we leave to the reader to check that all the previous Lemmas \ref{decay0}- \ref{decay7} still hold if instead of being a quasi-cocycle one assumes that $q$ is just an (anti)symmetric \cite{CS11} $s$-array on $\G$, as defined in Remark \ref{s-array}. Essentially, all the proofs will follow in the same way.  
\end{Remark}

%%%%%%%%%%%%%%%%%%%%%%%%%%%%%%%%%%%%%%%%%%%%%%%%%%%%%%%%%%%%%%%%%%%%%%%%%%%%%
%%%%%%%%%%%%%%%%%%%%%%%%%%%%%%%%%%%%%%%%%%%%%%%%%%%%%%%%%%%%%%%%%%%%%%%%%%%%%
%%%%%%%%%%%%%%%%%%%%%%%%%%          APPLICATIONS TO GROUP THEORY           %%%%%%%%%%%%%%%%%%%%%%%%%%%%%%%%%%% %%%%%%%%%%%%%%%%%%%%%%%%%%%%%%%%%%%%%%%%%%%%%%%%%%%%%%%%%%%%%%%%%%%%%%%%%%%%%%%%%%%%%%%%%%%%%%%%%%%%%%%%% %%%%%%%%%%%%%%%%%%%%%%%%%%%%%%%%%%%%%%%%%%%%%%%%%%%%%%%%%%%%%%%%%%%%%%%%%%%%%

\section{Applications to Group Theory}

The presence of a non-trivial quasi-cocycle on a group taking values in its left regular representation restricts significantly the internal structure of the group; for instance, it excludes most relations of ``order one'' like (asymptotic) commutation, etc. In the same spirit, we will show that any such group has at most finitely many finite conjugacy classes. More precisely, appealing to the representation theory techniques which steam mainly from \cite{CSU13} we show the following more general statement.  
 
 \begin{theorem}Let $\G$ be a non-amenable group together with $ \Sigma\lhd\G$, a normal non-amenable subgroup.  If $\G \in NC_1$ then there are only finitely many finite orbits for the action of $\Sigma$ on $\G$ by conjugation. \end{theorem}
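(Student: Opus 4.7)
The plan is to reformulate the conclusion as finiteness of the set
\[
F := \{\, \g \in \G : [\Sigma : C_\Sigma(\g)] < \infty \,\},
\]
which is precisely the union of the finite $\Sigma$-orbits of the conjugation action on $\G$. Since each such orbit is finite, the theorem is equivalent to $|F| < \infty$. First I would verify that $F$ is a normal subgroup of $\G$: closure under products follows from $C_\Sigma(\g_1) \cap C_\Sigma(\g_2) \subseteq C_\Sigma(\g_1 \g_2)$, closure under inversion is immediate, and $\G$-invariance of $F$ uses $\Sigma \lhd \G$ via the identity $C_\Sigma(h \g h^{-1}) = h\, C_\Sigma(\g)\, h^{-1}$. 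I will then argue by contradiction and assume $F$ is infinite.

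The core step is a local amenability argument. Fix the $NC_1$ data for $\G$: a weakly-$\ell^2$ mixing orthogonal representation $\pi : \G \to \Cal O(\Cal H)$ together with an unbounded quasi-cocycle $q \in \mathcal{QH}_{as}^1(\G, \pi)$. For an arbitrary finite subset $\{\g_1, \ldots, \g_k\} \subset F$, set $\La := \bigcap_{i=1}^k C_\Sigma(\g_i)$. Since each $C_\Sigma(\g_i)$ has finite index in the non-amenable group $\Sigma$, so does $\La$; in particular $\La$ is non-amenable and hence infinite. I will then apply Lemma \ref{2.4} to the infinite subgroup $\La < \G$, which yields the dichotomy: either $C_\G(\La)$ is amenable, or $\langle \La, C_\G(\La) \rangle \subseteq B^q_C$ for some $C \geq 0$.

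The second alternative can be ruled out as follows. If $q$ is bounded on $\La$, then applying the defining quasi-cocycle inequality across the finitely many cosets of $\La$ in $\Sigma$ extends the bound to all of $\Sigma$. But by \cite[Theorem 2.1]{CSU13}, applied to the non-amenable normal subgroup $\Sigma \lhd \G$ exactly as in the proof of Proposition \ref{NCsimple}~d), the restriction $q|_\Sigma$ is unbounded, a contradiction. Consequently $C_\G(\La)$ must be amenable. Because each $\g_i$ commutes with its own $\Sigma$-centralizer and hence with $\La \subseteq C_\Sigma(\g_i)$, the finite set $\{\g_1, \ldots, \g_k\}$ lies in $C_\G(\La)$; thus $\langle \g_1, \ldots, \g_k \rangle$ is amenable.

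Since every finitely generated subgroup of $F$ is amenable, and since amenability is a local property of groups, $F$ itself is amenable. This produces an infinite normal amenable subgroup of $\G$, which contradicts $\G \in NC_1$ by Proposition \ref{NCsimple}~e). Hence $F$ is finite, proving the theorem. The main conceptual point, and what I expect to require the most care, is the choice of $\La$ as the intersection of the $\Sigma$-centralizers of an arbitrary finite family inside $F$: it is precisely this choice that lets the dichotomy of Lemma \ref{2.4} push amenability down to each finitely generated subgroup of $F$, while the finite index $[\Sigma:\La]<\infty$ simultaneously eliminates the ``bounded'' alternative of the dichotomy via the restriction-stability of unbounded quasi-cocycles from \cite{CSU13}.
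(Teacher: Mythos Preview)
Your argument is correct and takes a genuinely different, more elementary route than the paper. The paper works inside $L(\G)$: it builds the unit vectors $\xi_n=|\La_n|^{-1/2}\sum_{a\in\La_n}u_a$ (where $\La_n$ is the union of the first $n$ finite $\Sigma$-orbits), uses non-amenability of $\Sigma$ together with the weak-$\ell^2$ hypothesis to run a spectral-gap argument for the Gaussian deformation $V_t$, obtains uniform concentration of the $\xi_n$ on a $q$-ball $B'_C$, and then exploits the techniques of \cite[Theorems~3.1 and 3.5]{CSU13} to force $|\La_n|$ to stabilize. By contrast, you never touch $L(\G)$ or $V_t$: you reduce everything to the centralizer dichotomy of Lemma~\ref{2.4} applied to the finite-index subgroup $\La=\bigcap_i C_\Sigma(\g_i)\leq\Sigma$, kill the ``bounded'' branch via the finite-index transfer of boundedness together with \cite[Theorem~2.1]{CSU13}, and conclude by the locality of amenability plus Proposition~\ref{NCsimple}~e). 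Your approach is shorter and stays at the level of groups and quasi-cocycles; the paper's approach illustrates the deformation/rigidity machinery that drives the rest of the article and foreshadows the $V_t$-analysis used in the proof of Theorem~\ref{main1}.
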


\begin{proof} Since $\G \in NC_1$ there exists a weakly-$\ell^2$, mixing orthogonal representation $\pi:\G\ra \Cal O(\Cal H)$ and $q\in \Cal{QH}^1_{as}(\G,\pi)$. 

Let $\{ \mathcal O_n \,:\, n\in \mathbb N\}$ be an enumeration of all the (disjoint) finite orbits for the action of $\Sigma$ on $\G$ by conjugation. Notice that $\cup_{n\in\mathbb N} \Cal O_n= \{ \g\in\G \,:\,[\Sigma:C_\Sigma(\g)]<\infty\}=:\Lambda$, where $C_\G(\g)$ is the centralizer of $\g$ in $\G$. Since $\Sigma$ is normal in $\G$, it is a straightforward exercise to show that $\Lambda$ is a normal subgroup of $\G$. For every $n\in \mathbb N$ denote by $\Lambda_n:=\cup^n_{i=1}\Cal O_i$. 

The proof relies heavily on the techniques used in  \cite[Theorems 3.1 and 3.5]{CSU13} so we will only include a brief sketch on how to fit together these results. Denote by $M=L(\G)$ the corresponding group von Neumann algebra and let $u_\g$, with $\g\in\G$  be the canonical group unitaries. For every $n\in \mathbb N$ denote by $\xi_n=|\Lambda_n|^{-1/2}\sum_{a\in \Lambda_n} u_a  \in  M\subset L^2(M) $.  Then a basic calculation shows that for every $\g\in \Sigma$ and $n\in \mathbb N$ we have 
\begin{equation*}
u_\g  \xi_n =\xi_n u_\g. \end{equation*} 

Denote by $\tilde M =L^\infty(Y^\pi)\rtimes \G$ the Gaussian dillation associated with $\pi$. Let $V_t:L^2(M)\ra L^2(\tilde M)$, with $t\in \mathbb R$, be the Gaussian deformation corresponding to $q$ as defined in Section \ref{sec: deformations}. Since $\Sigma$ is non-amenable and $\pi$ is weakly-$\ell^2$ one can find a finite subset $E\subset \Sigma$ and $K\geq 0$ such that for every $\xi \in L^2(\tilde M )\ominus L^2(M)$ we have that
\begin{equation*} \sum_{\g\in E}  \|u_\g \xi-\xi u_\g \|_2\geq K\|\xi\|_2.
\end{equation*}
Then Proposition \ref{almostbimodular} above combined with the same spectral gap argument from the beginning of the proof of theorem \cite[Theorem 3.1]{CSU13} show that $$\lim_{t\ra 0}\left(\sup_n\|e^\perp_M \cdot V_t(\xi_n)\|_2\right)=0.$$ Thus the transversality property (Proposition \ref{transversality})  will further imply that
\begin{equation*}\lim_{t\ra 0}\left(\sup_n\|\xi_n-V_t (\xi_n)\|_2\right)=0.\end{equation*} Then a simple calculation shows that for every $\ve >0$ there exists $C\geq 0$ such that
\begin{equation}\label{ninner2}\sup_n\|\xi_n- P_{B'_C} (\xi_n)\|_2\leq \ve.\end{equation} As before, we have denoted by $P_{B'_C}$ the orthogonal projection from $\ell^2(\G)$ onto the Hilbert subspace $\ell^2(B'_C)$ with
$B'_C=\{
\la\in \G \,:\, \|q(\la)\|\leq C, \la\neq e\}$ being the ball of radius $C$ centered and pierced at the identity element $e$.

Then the same argument as in the proof of \cite[Theorem 3.5, pages 15-16]{CSU13} shows that for every $\ve>0$  there exists $C\geq 0$ such that for every $\g\in \Sigma$ we have
 \begin{equation}\label{boundedseq}\limsup_n\|P_{A_\g}(\xi_n)\|^2_2\geq 1-6\ve^2,\end{equation}
 where $A_\g= \g B'_C\g^{-1}\cap B'_C$. 
 Since $\Sigma$ is normal in $\G$ then by \cite[Theorem 2.1]{CSU13} the quasi-cocycle is unbounded on $\Sigma$.  Thus, since $\g$ does not depend on $\ve$ or $C$, one can pick $\g \in \Sigma \setminus B_{2C+2D(q)}$ and by \cite[Theorem 2.1]{CSU13} again it follows $A_\g$ is finite. Moreover, using the definition of $\xi_n$ we see that
 $ \|P_{A_\g}(\xi_n)\|^2_2=  |A_\g\cap \Lambda_n| |\Lambda_n|^{-1}$, for every $n$. This together with (\ref{boundedseq}) imply that there exists an integer $n_0$ such that $\La_k=\La_{n_0}$, for every $k\geq n_0$; hence $\La=\La_{n_0}$ is finite.
  \end{proof}
If we let $\Sigma=\G$ in the previous theorem we notice the following immediate corollary.  
 \begin{cor}Let $\G\in NC_1$. Then $\G$ has only finitely many finite conjugacy classes. Hence there exists a short exact sequence of groups
 $1\ra F \ra \G\ra \G_0\ra 1$, where $F$ is a finite and $\G_o$ is infinite conjugacy class. In particular, if $\G$ is assumed torsion free then $\G$ is infinite conjugacy class. \end{cor}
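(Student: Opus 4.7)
The plan is to deduce the corollary directly from the theorem immediately preceding it by taking $\Sigma = \G$, and then to extract the short exact sequence by identifying $F$ with the FC-center of $\G$.

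First I would observe that since $\G \in NC_1$ the group $\G$ is non-amenable, and it is obviously a normal non-amenable subgroup of itself. Applying the previous theorem with $\Sigma = \G$ yields immediately that the action of $\G$ on itself by conjugation has only finitely many finite orbits; that is, $\G$ has only finitely many finite conjugacy classes. This gives the first assertion.

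Next, I would let $F = \{\g \in \G : [\G : C_\G(\g)] < \infty\}$ denote the FC-center of $\G$, i.e., the union of all finite conjugacy classes. By the previous step $F$ is a finite union of finite sets, hence $F$ is finite; it is also manifestly normal (even characteristic) in $\G$, so we may set $\G_0 = \G/F$ to obtain the desired short exact sequence. The key point to verify is that $\G_0$ is icc. For this, suppose $\g F \in \G_0$ has finite conjugacy class, and set
\begin{equation*}
H = \{ h \in \G \,:\, [h,\g] \in F \},
\end{equation*}
so that $H/F$ is the centralizer of $\g F$ in $\G_0$ and $[\G : H] = [\G_0 : H/F] < \infty$. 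The map $H \to F$ sending $h \mapsto [h,\g]$ has fibers that are cosets of $C_\G(\g)$, so $[H : C_\G(\g)] \leq |F| < \infty$. Hence $[\G : C_\G(\g)] \leq [\G:H]\,[H:C_\G(\g)] < \infty$, which means $\g \in F$, i.e., $\g F = eF$ in $\G_0$; thus $\G_0$ is icc.

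Finally, for the torsion-free assertion, I would note that $F$ being a finite normal subgroup of $\G$ consists entirely of torsion elements; under the hypothesis that $\G$ is torsion free this forces $F = \{e\}$, in which case $\G = \G_0$ is itself icc. There is no substantial obstacle here: the only subtle point is the verification that $\G/F$ is icc, which reduces to the elementary index computation above; everything non-trivial has already been absorbed into the preceding theorem.
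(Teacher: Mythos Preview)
Your proof is correct and follows the same approach as the paper, which simply states that the corollary follows by taking $\Sigma=\G$ in the preceding theorem. You have supplied the standard details (that the FC-center $F$ is finite and that $\G/F$ is icc) which the paper leaves implicit.
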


The following corollary is a straightforward consequence of the previous results. 

\begin{cor}\label{fcenter} Let $\G\in NC_1$ be a non-amenable group together with $\Sigma \lhd \G$ a non-amenable normal subgroup. Assume that $\G \ca N$ is a trace preserving action on a finite von Neumann algebra $N$. If $N\rtimes \G$ denotes the corresponding crossed product von Neumann algebra  then there exists $\La\lhd \G$ a finite normal subgroup such that $\Cal Z(N\rtimes \Sigma)\subseteq(N\rtimes \Sigma)'\cap (N\rtimes \G) \subseteq N\rtimes \La$.  \end{cor}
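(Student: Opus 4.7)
Define the subset
\[
\Lambda := \{\, \gamma \in \Gamma \,:\, \text{the $\Sigma$-conjugacy orbit of $\gamma$ is finite} \,\} = \bigcup_{n} \mathcal O_n,
\]
where $\{\mathcal O_n\}$ are the finite orbits of the conjugation action $\Sigma \curvearrowright \Gamma$. The first step is to invoke the previous theorem (applied to the normal, non-amenable subgroup $\Sigma \lhd \Gamma$) to conclude that only finitely many such orbits exist, hence $\Lambda$ is finite. A quick computation using the normality of $\Sigma$ in $\Gamma$ shows $C_\Sigma(h\gamma h^{-1}) = h C_\Sigma(\gamma) h^{-1}$ for every $h \in \Gamma$ and $\gamma \in \Gamma$, so $[\Sigma : C_\Sigma(h\gamma h^{-1})] = [\Sigma : C_\Sigma(\gamma)]$; this makes $\Lambda$ stable under $\Gamma$-conjugation, i.e.\ $\Lambda \lhd \Gamma$.

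The inclusion $\mathcal Z(N\rtimes \Sigma) \subseteq (N\rtimes \Sigma)' \cap (N\rtimes \Gamma)$ is immediate, so the main point is the second inclusion. The strategy is a routine Fourier coefficient argument. Take any $x \in (N\rtimes \Sigma)' \cap (N \rtimes \Gamma)$ and write its Fourier expansion $x = \sum_{\gamma\in\Gamma} x_\gamma u_\gamma$ with $x_\gamma \in N$. The assumption $u_s x u_s^* = x$ for every $s \in \Sigma$ translates into the coefficient identity $\sigma_s(x_\gamma) = x_{s\gamma s^{-1}}$ for all $\gamma \in \Gamma$ and $s \in \Sigma$. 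Since $\sigma_s$ is trace-preserving, taking $\|\cdot\|_2$-norms gives $\|x_\gamma\|_2 = \|x_{s\gamma s^{-1}}\|_2$, so $\gamma \mapsto \|x_\gamma\|_2$ is constant along $\Sigma$-conjugacy classes in $\Gamma$.

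The final step combines this with the $L^2$ summability: the identity $\|x\|_2^2 = \sum_{\gamma \in \Gamma} \|x_\gamma\|_2^2 < \infty$ forces $\|x_\gamma\|_2 = 0$ whenever the $\Sigma$-orbit of $\gamma$ is infinite, since otherwise $\|x_\gamma\|_2$ would contribute infinitely many equal positive terms. Therefore $x_\gamma = 0$ for all $\gamma \notin \Lambda$, which precisely means $x \in N \rtimes \Lambda$. There is no real technical obstacle here; the only nontrivial input is the previous theorem guaranteeing the finiteness of $\Lambda$ (this is where membership $\Gamma \in NC_1$ and non-amenability of $\Sigma$ enter), the rest being a standard Fourier/trace argument.
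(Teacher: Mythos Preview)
Your proof is correct and follows precisely the approach the paper intends: it labels the corollary a ``straightforward consequence of the previous results'' without further argument, and what you wrote is exactly that straightforward consequence---invoke the preceding theorem to see that the set $\Lambda$ of elements with finite $\Sigma$-conjugacy orbit is a finite normal subgroup of $\Gamma$, then carry out the standard Fourier-coefficient computation to see that any element of the relative commutant is supported on $\Lambda$. One tiny cosmetic point: you checked that $\Lambda$ is $\Gamma$-conjugation invariant but not explicitly that it is closed under products and inverses; however this is immediate from $C_\Sigma(\gamma_1\gamma_2)\supseteq C_\Sigma(\gamma_1)\cap C_\Sigma(\gamma_2)$ and $C_\Sigma(\gamma^{-1})=C_\Sigma(\gamma)$, and the paper itself dismisses it as ``a straightforward exercise.''
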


%%%%%%%%%%%%%%%%%%%%%%%%%%%%%%%%%%%%%%%%%%%%%%%%%%%%%%%%%%%%%%%%%%%%%%%%%%%%%
%%%%%%%%%%%%%%%%%%%%%%%%%%%%%%%%%%%%%%%%%%%%%%%%%%%%%%%%%%%%%%%%%%%%%%%%%%%%%
%%%%%%%%%%%%%%%%%%%%%%%%%%               PRIMENESS              %%%%%%%%%%%%%%%%%%%%%%%%%%%%%%%%%%%%%%%%%%%%%%%%%%%%%%%%%%%%%%%%%%%%%%%%%%%%%%%%%%%%%%%%%%%%%%%%%%%%%%%%%%%%%%%%%%%%%%%%%%%%%%%%%%%% 
%%%%%%%%%%%%%%%%%%%%%%%%%%%%%%%%%%%%%%%%%%%%%%%%%%%%%%%%%%%%%%%%%%%%%%%%%%%%%

\section{Primeness Results for von Neumann Algebras of Groups in $NC_1\cap Quot(\mathcal C_{rss})$}

In this section we will use the technical results from the previous sections to derive the proof of Theorem \ref{main1}. Our arguments are similar in essence with the ones used in \cite{CIK13} but they have slightly different technical forms. For the sake of completeness we include all details. To simplify the writing in the main proof  we first introduce a notation:

\begin{notation} \label{notationres} Fixing a group $\Gamma_n \in Quot_n(\Cal C_{rss})$, there exist groups $\G_1, \G_2, \ldots, \G_{n-1}$ and a collection of  surjective homomorphisms $\pi_k:\G_{k}\rightarrow \G_{k-1}$ such that $\G_1\in \Cal C_{rss}$, and  $ker(\pi_k)\in \Cal C_{rss}$, for all $2\leq k\leq n$. Then we define $\theta_n=\pi_2\circ\pi_3\circ\cdots\circ\pi_n:\Gamma_n\rightarrow\Gamma_1$ and notice that by Proposition \ref{quot} we have that $ker (\theta_n)\in Quot_{n-1}(\Cal C_{rss})$. 
\end{notation}
\vskip 0.07in

\noindent {\bf  Proof of theorem \ref{main1}}.  Since both $NC_1$ and $Quot_n(C_{rss})$ are closed under commensurability then it will be sufficient to treat the case $\La/\Omega=\G_n\in NC_1 \cap  Quot_n(C_{rss})$, where $\Omega\lhd \Lambda$ is a finite normal subgroup. 

Throughout the proof we will denote by $\{u_\la\}_{\la\in \Lambda}\subset L(\Lambda)=:M$ and $\{v_\g\}_{\g\in\G_1}\subset L(\G_1)$ the canonical unitaries. We will prove our statement by induction on $n$. 

First we argue for $n=1$. Since $\G_1 \in \Cal C_{rss}$ denote by $\theta:\Lambda \ra \G_1$ the canonical projection. Consider the $*$-homomorphism $\tilde \theta: M\ra M\bar \otimes L(\G_1)$ given by $\tilde\theta(u_\la)=u_\la\otimes v_{\theta(\la)}$, for all  $\la\in\Lambda$. Assume by contradiction that $B,C\subseteq pMp$ are two commuting, diffuse subalgebras such that the inclusion $B\vee C \subseteq pMp$ has finite index. Hence $\tilde\theta(B)$, $\tilde\theta(C)$ are commuting, diffuse subalgebras of $\tilde\theta(p)\left (M\bar\otimes L(\G_1)\right )\tilde\theta(p)$. 
Let $P\subseteq B$ be an arbitrary diffuse, amenable subalgebra. Then $\tilde\theta(P)$ is amenable and hence it is amenable relatively to $M \otimes 1$ inside   $M\bar\otimes L(\G_1)$. Thus by the dichotomy property we either have that 

\begin{enumerate}
\item $\tilde\theta(P) \preceq_{M\bar\otimes L(\G_1)}  M\otimes1$, or 
\item $\tilde\theta(C)$ is amenable relative to $M \otimes 1$ inside   $M\bar\otimes L(\G_1)$. 
\end{enumerate}
Moreover, the case (2) above further implies, by the same dichotomy theorem, that either  \begin{enumerate} 
\item[(3)] $\tilde\theta(C) \preceq_{M\bar\otimes L(\G_1)}  M\otimes1$,  or  
\item[(4)] $\tilde\theta(B\vee C)$ is amenable relative to $M\otimes1$ inside $M\bar\otimes L(\G_1)$. 
\end{enumerate}

As in the proof of  \cite[Theorem 3.1]{CIK13} we show that case (4) above will lead to a contradiction. Indeed since $B\vee C\subseteq pMp$ has finite index then $pMp\preceq^s_{pMp} B\vee C$ and hence $pMp$ is amenable relative to $B\vee C$ inside $pMp$. This implies that $\tilde\theta(pMp)$ is  amenable relative to $\tilde \theta(B\vee C)$ inside $M\bar\otimes L(\G_1)$ and by \cite[Proposition 2.4] {OP07} it follows that $\tilde\theta(pMp)$ is amenable relative to $M\otimes 1$ inside $M\bar\otimes L(\G_1)$. Finally,  \cite[Proposition 3.5]{CIK13}  further implies that $\theta(\La)=\G_1$ is amenable which is a contradiction.

In conclusion, for every $P\subseteq B$ be an arbitrary diffuse, amenable subalgebra we have either (1) or (3) above. Due to symmetry, we can assume without any loss of generality that $\tilde\theta(B)\preceq_{M\bar\otimes L(\G_1)}M\otimes 1$. By \cite[Proposition 3.4]{CIK13} this further implies that $B\preceq_M L(\Omega)$ and since $\Omega$ is finite it follows that $B$ is not diffuse which contradicts the assumptions; this completely settles case $n=1$.

Next we show the inductive step. Since $\La/\Omega =\G_n \in \Cal C_{rss}$ there exists a surjection $\theta'=\theta_n \circ \theta : \La\ra \G_1$, where $\theta_n$ is the homomorphism from Notation \ref{notationres}. This allows us to define  a $*$-homomorphism $\tilde\theta':M\rightarrow M\bar{\otimes}L(\G_1)$ by letting $\tilde\theta'(u_\la)=u_\la \otimes v_{\theta' (\la)}$, for all $\la\in\La$. Assume by contradiction that $B,C\subseteq pMp$ are two commuting, diffuse subalgebras such that the inclusion $B\vee C \subseteq pMp$ has finite index. Thus $\tilde\theta'(B)$, $\tilde\theta'(C)$ are two commuting diffuse subalgebras of $\tilde\theta'(p)\left (M\bar\otimes L(\G_1)\right )\tilde\theta'(p)$. Proceeding as in $n=1$ case one of the following must hold:  
 \begin{enumerate}
\item [(5)] $\tilde\theta'(B) \preceq_{M\bar\otimes L(\G_1)}  M\otimes1$; 
\item[(6)] $\tilde\theta'(C) \preceq_{M\bar\otimes L(\G_1)}  M\otimes1$; 
\item[(7)] $\tilde\theta'(B\vee C)$ is amenable relative to $M\otimes1$ inside $M\bar\otimes L(\G_1)$. 
\end{enumerate}
 As in that proof, case (7) implies that $\theta'(\La)=\G_1$ is amenable which is a contradiction and cases (5) and (6) implies that  $B \preceq_M  L(ker(\theta')))$
and $C \preceq_M  L(ker(\theta'))$, respectively. Also, notice that if $B$ is amenable (and hence $C$ non-amenable!) we automatically have that $B \preceq_M  L(ker(\theta')))$. Thus, by the previous discussion,  it suffices to treat only the case  $B \preceq_M  L(\Sigma)$, where $ \Sigma=ker(\theta')$. By Proposition \ref{masa} one can find $s>0$, non-zero projections $r\in L(\Sigma), q\in B$, a subalgebra $B_o\subseteq rL(\Sigma)r$, and a $*$-isomorphism $\theta: qBq\ra B_o$ such that the following properties are satisfied:

\begin{eqnarray} 
&&\label{7.2} B_o\vee (B_o'\cap rL(\Sigma)r)\subset rL(\Sigma)r \text{ has finite index};\\
 &&\label{7.1} \text{ there exist a non-zero partial isometry }v\in M\text{ such that }\\ 
 && \nonumber rE_N(vv^*)=E_N(vv^*)r\geq sr\text{ and }\theta(qBq)v=B_o v=r v qBq.
\end{eqnarray}

By Theorem \ref{NCpreserve} and Proposition \ref{quot}, we have $\Sigma /(\Sigma \cap \Omega) = ker(\theta') /(\ker(\theta') \cap \Omega)= \ker (\theta_n)\in NC_1\cap Quot_{n-1}(\Cal C_{rss})$. Thus, by the induction hypothesis, it follows a corner of $B_o$ or $B_o'\cap rL(\Sigma)r=:C_o$ is completely atomic. However, since $B$ is diffuse then from (\ref{7.1}) and (\ref{7.2}) it follows that $B_o$ is diffuse too, and hence  a corner of $C_o$ is completely atomic.  Thus, there exists $p_o\in C_o$ nonzero projection such that  $p_oC_0p_o=\mathbb C p_o$. From (\ref{7.2}) the inclusion $B_o\vee C_o \subseteq rL( \Sigma)r$ has finite index and from Lemma \ref{ramen} it follows that the inclusion $B_op_o=B_o\vee (\mathbb C p_o)=p_o(B_o\vee C_o)p_o\subseteq p_o L( \Sigma)p_o$ has finite  index, too.

Let $\wp \in \Cal{QH}^1_{as}(\La, \pi)$ be an unbounded quasi-cocycle and let $V_t:L^2(M)\ra L^2(\tilde M)$ be corresponding Gaussian deformation as defined in the Section \ref{sec: deformations}, where $M\subseteq \tilde M$ is the Gaussian dilation of $M$. Denote by $e_M$ the orthogonal projection on $L^2 (\tilde M)$ onto $L^2(M)$. Since $C$ can always be assumed non-amenable then using the same \`{a} la Popa spectral gap argument (see for instance \cite[Theorem 3.2]{CS11}) we have that $e^{\perp}_M \cdot V_t\ra 0$ uniformly on $(B)_1$, as $t\ra 0$.  Using Proposition \ref{conv1} this further implies that  $e^{\perp}_M \cdot V_t\ra 0$ uniformly on $rv (qBq)_1$, as $t\ra 0$. Using (\ref{7.1}) and Proposition \ref{conv1} again we get that $e^{\perp}_M \cdot V_t\ra 0$ uniformly on $(B_o)_1 r vv^*$, as $t\ra 0$. Moreover, Lemma \ref{decay0} further gives that  $e^{\perp}_M \cdot V_t\ra 0$ uniformly on $(B_o)_1 r E_N(vv^*)$, as $t\ra 0$. Hence, by (\ref{7.1}) and Corollary \ref{conv'} we obtain that $e^{\perp}_M \cdot V_t\ra 0$ uniformly on $(B_o)_1 r=(B_o)_1$, as $t\ra 0$ and by Proposition \ref{conv1} again we conclude that $e^{\perp}_M \cdot V_t\ra 0$ uniformly on $(B_o)_1 p_o$, as $t\ra 0$.

Since   $B_op_o \subseteq p_oL(\Sigma)p_o$ has finite Pimsner-Popa index then using part (2) in Lemma \ref{ramen} we have that $p_o L(\Sigma ) p_o \preceq_{p_oL (\Sigma )p_o}B_op_o$. Hence one can find nonzero projections $p'_o\in  p_oL(\Sigma ) p_o$, $r_o\in B_op_o$, nonzero partial isometry $v_o\in  p_oL(\Sigma ) p_o$, and an injective unital $\star$-homomorphism $\Xi:\, p'_oL(\Sigma ) p'_o\ra r_oBr_o$ such that $\Xi(x)v_o=v_ox$, for all $x\in p'_oL(\Sigma ) p'_o$. Since $v^*_ov_o\in \mathcal Z(p'_oL(\Sigma ) p'_o)$ and $v_ov^*_o\in (\Xi(p'_o L(\Sigma)p'_o))'\cap r_oL(\Sigma ) r_o $ it follows that 
$v_ov_o^*L(\Sigma) v_ov_o^*=v_o L(\Sigma)v_o^*= \Xi(p'_o L(\Sigma)p'_o)v_ov_o^*$. Since 
$\Xi(p'_o L(\Sigma)p'_o)\subseteq p_oB_op_0$ and $e^{\perp}_M \cdot V_t\ra 0$ uniformly on $(B_o)_1 p_o$, as $t\ra 0$ then from Proposition \ref{conv1} it follows that $e^{\perp}_M \cdot V_t\ra 0$ uniformly on $(\Xi(p'_o L(\Sigma)p'_o))_1$, as $t\ra 0$. Thus using the above relations together with Proposition \ref{conv1} we further get that $e^{\perp}_M \cdot V_t\ra 0$ uniformly on $(\Xi(p'_o L(\Sigma)p'_o) v_ov^*_o)_1= (v_ov_o^*L(\Sigma) v_ov_o^*)_1$, as $t\ra 0$. Then Lemmas \ref{decay6}-\ref{decay7} and Corollary \ref{fcenter} imply the quasi-cocycle $\wp$ is bounded on $\Sigma$.  Moreover, since $\Sigma$ is normal in $\La$ then \cite[Theorem 2.1]{CSU13} further implies that $\wp$ is bounded on $\La$ and we have reached a contradiction; this settles the inductive step and hence the proof. 
$\hfill\square$
\vskip 0.08in
The authors strongly believe Theorem \ref{main1} actually holds for all groups satisfying only condition $NC_1$. A succesful strategy to show such a statement seems to depend heavily on investigating new aspects of the infinitesimal analysis of the weak deformations arising from quasicocycles (the Gaussian dilation from Section \ref{sec: deformations}). However, there are some serious technical obstacles in this direction, the most significant being the lack of uniform bimidularity as well as good averaging properties of the Gaussian dilations associated with non-proper unbounded quasicocycles.

\begin{Conjecture}Let $\G\in NC_1$ and let $L(\G)$ be its the corresponding von Neumann algebra. If $p \in L(\G)$ is  a nonzero projection, then any two diffuse, commuting subalgebras $B,C\subseteq pL(\G)p$  generate together a von Neumann subalgebra $B\vee C$ which has infinite Pimsner-Popa index in $pL(\G)p$; in particular, $L(\G)$ is prime.
\end{Conjecture}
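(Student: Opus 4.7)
The plan is to argue by contradiction along the lines of the proof of Theorem \ref{main1}, replacing the inductive resolution supplied by the ${\bf Quot}(\Cal C_{rss})$ structure with a direct analysis of the Gaussian deformation. Suppose $B,C\subseteq pL(\G)p=:pMp$ are commuting diffuse subalgebras with $B\vee C\subseteq pMp$ of finite Pimsner-Popa index, and (appealing to the finite-dimensional centre of $M$ provided by Theorem B) assume $pMp$ is non-amenable, so that at least one of $B,C$ is non-amenable; relabel so that $C$ is non-amenable. Fix an unbounded quasi-cocycle $\wp\in \Cal{QH}^1_{as}(\G,\pi)$ into a mixing weakly-$\ell^2$ representation, and let $V_t\colon L^2(M)\to L^2(\tilde M)$ be the associated Gaussian deformation of Section \ref{sec: deformations}. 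Because $C$ is non-amenable and $\pi$ is weakly-$\ell^2$, a Popa-type spectral gap argument produces a finite $F\subset \Cal U(C)$ and $K>0$ with $\sum_{u\in F}\|u\xi-\xi u\|_2\geq K\|\xi\|_2$ on $L^2(\tilde M)\ominus L^2(M)$, and combining this with the asymptotic bimodularity of Theorem \ref{almostbimodular} and with $[B,C]=0$ yields $e_M^\perp\cdot V_t\to 0$ uniformly on $(B)_1$ as $t\to 0$.

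The central step is to upgrade this uniform decay from $(B)_1$ to $(pMp)_1$. Once this is achieved one propagates it to $(M)_1$ by conjugating $p$ with group unitaries (together with Proposition \ref{conv1}), and then Lemmas \ref{decay6}-\ref{decay7} combined with Corollary \ref{fcenter} force $\wp$ to be bounded on $\G$, contradicting $\G\in NC_1$. In the proof of Theorem \ref{main1} this upgrade is performed by first intertwining a corner of $B$ into $L(\Sigma)$ for a strictly smaller normal subgroup $\Sigma\lhd \G$ in the ${\bf Quot}_n(\Cal C_{rss})$ chain, applying the induction hypothesis to obtain a finite-index inclusion inside a corner of $L(\Sigma)$, and then using Lemma \ref{ineq} and Corollary \ref{conv'} to read off uniform decay on that corner. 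Here there is no smaller $\Sigma$ to pass to, so one must exploit the commutation $[B,C]=0$ and the finite-index hypothesis $B\vee C\subseteq pMp$ directly, namely: promote uniform decay on $(B)_1$ first to uniform decay on $(B\vee C)_1$, and then use the index part of Lemma \ref{ramen} together with Corollary \ref{conv'} (or the intertwining arguments of Proposition \ref{masa}) to extract uniform decay on $(pMp)_1$.

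The hard part is precisely the passage from $(B)_1$ to $(B\vee C)_1$. For a genuine $1$-cocycle the Gaussian deformation $V_t$ is exactly $M$-bimodular, and one can handle this step by writing elements of $B\vee C$ as $L^2$-limits of sums $\sum_i b_i c_i$ with $b_i\in B$, $c_i\in C$ and freely exchanging the roles of $B$ and $C$. For a quasi-cocycle one has only the asymptotic bimodularity of Theorem \ref{almostbimodular} and the quantitative estimate of Lemma \ref{ineq}, and the latter carries a penalty $\|z\|_\infty |{\rm sup}(z)|^{1/2}$ that blows up once one approximates generic elements of $B\vee C$ by finite sums supported on large subsets of $\G$. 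Overcoming this would require a new bimodularity-on-subalgebras principle which, under $[B,C]=0$, allows one to trade $B$-bimodularity of $V_t$ against $C$-bimodularity on their common corner; a plausible route is to combine averaging over Dixmier-type norm-bounded sequences in $\Cal U(C)$ with a refined transversality estimate (in the spirit of Proposition \ref{transversality}) to absorb the non-bimodular defects in Lemma \ref{ineq}. This is precisely the ``new aspects of the infinitesimal analysis of the weak deformations'' alluded to by the authors, and it is the reason the statement is recorded only as a conjecture.
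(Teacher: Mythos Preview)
The statement you are attempting is labelled \emph{Conjecture} in the paper, and the paper supplies no proof of it; the authors explicitly say that a successful strategy ``seems to depend heavily on investigating new aspects of the infinitesimal analysis of the weak deformations'' and flag ``the lack of uniform bimodularity as well as good averaging properties of the Gaussian dilations associated with non-proper unbounded quasicocycles'' as the obstruction. So there is no paper proof to compare against.

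Your proposal is not a proof either, and to your credit you say so in the final paragraph. The outline you give --- spectral gap to get $e_M^\perp V_t\to 0$ uniformly on $(B)_1$, then try to propagate to $(B\vee C)_1$, then to $(pMp)_1$ via the finite-index assumption, then Lemmas \ref{decay6}--\ref{decay7} and Corollary \ref{fcenter} to force boundedness of the quasi-cocycle --- is exactly the shape one would expect from stripping the ${\bf Quot}(\Cal C_{rss})$ induction out of the proof of Theorem \ref{main1}. You also correctly identify the genuine gap: the step from $(B)_1$ to $(B\vee C)_1$. The estimate in Lemma \ref{ineq} carries the factor $\|z\|_\infty |{\rm sup}(z)|^{1/2}$, which is harmless when $z$ ranges over a fixed finite set (as in Proposition \ref{conv1}) but uncontrolled when one approximates arbitrary elements of $B\vee C$. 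Your suggested cure --- Dixmier-type averaging over $\Cal U(C)$ combined with a refined transversality estimate --- is plausible in spirit but is not an argument; it is a description of what one would need, and it coincides with what the authors themselves flag as missing.

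In short: your write-up is an accurate diagnosis of why the conjecture is open, not a proof, and it matches the paper's own assessment of the difficulty.
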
  

\subsection{Further results} Our techniques can also be used to show primeness of II$_1$  factors arising from all free ergodic probability measure preserving actions of groups in the  class $NC_1 \cap Quot (\mathcal C_{rss})$. In fact one can show the following counterpart of Theorem \ref{main1} for actions.

\begin{theorem}
Let $\G\in NC_1 \cap Quot (\mathcal C_{rss})$ and let $\G \ca X$ be a free ergodic pmp action on a probability space. Denote by $L^\infty(X)\rtimes \G$ the corresponding group measure spae von Neumann algebra. If $p \in L^\infty(X)\rtimes \G$ is  a nonzero projection, then any two diffuse, commuting subalgebras $B,C\subseteq p(L^\infty(X)\rtimes \G)p$  generate together a von Neumann subalgebra $B\vee C$ which has infinite Pimsner-Popa index in $p(L^\infty(X)\rtimes \G) p$. In particular, $L^\infty(X)\rtimes \G$ is prime. 

\end{theorem}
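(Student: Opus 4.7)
The plan is to adapt the inductive proof of Theorem \ref{main1} to the crossed product $M=L^\infty(X)\rtimes \G$. Essentially every ingredient used in that proof---Popa's intertwining criterion (Theorem \ref{corner}), the dichotomy characterising $\Cal C_{rss}$ (which is formulated directly for crossed products $B\rtimes \G$), Proposition \ref{masa}, the Gaussian deformation $V_t$ of Section \ref{sec: deformations}, the transversality and almost-bimodularity properties, the propagation Lemmas \ref{conv1}--\ref{conv'}, the decay Lemmas \ref{decay6}--\ref{decay7}, and Corollary \ref{fcenter}---is stated for general tracial crossed products. I would substitute $L^\infty(X)\rtimes \G$ for $L(\La)$ and $L^\infty(X)\rtimes \Sigma$ for $L(\Sigma)$ throughout, and proceed by induction on $n$ where $\G\in Quot_n(\Cal C_{rss})$. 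Note that the induction hypothesis, when applied to the restricted (and generally non-ergodic) free action $\Sigma\ca X$, still produces the desired completely atomic corner since the conclusion of the theorem does not require the crossed product to be a factor.

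For the inductive step, given commuting diffuse $B,C\subseteq pMp$ with $B\vee C$ of finite Pimsner--Popa index, I would introduce the $*$-homomorphism $\tilde\theta:M\to M\bar\otimes L(\G_1)$ defined by $\tilde\theta(au_\g)=au_\g\otimes v_{\theta_n(\g)}$ for $a\in L^\infty(X)$ and $\g\in \G$, and view $M\bar\otimes L(\G_1)=M\rtimes\G_1$ as a trivial crossed product in order to apply the dichotomy of $\Cal C_{rss}$. The case analysis proceeds exactly as in the proof of Theorem \ref{main1}: the ``amenable relative'' branch for $\tilde\theta(B\vee C)$ combined with the finite-index hypothesis and the crossed-product analogue of \cite[Proposition 3.5]{CIK13} forces $\G_1$ to be amenable, a contradiction; the remaining branches yield $B\preceq_M L^\infty(X)\rtimes \Sigma$ (or the symmetric statement for $C$), where $\Sigma=\ker(\theta_n)\in NC_1\cap Quot_{n-1}(\Cal C_{rss})$. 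Applying Proposition \ref{masa} with $N=L^\infty(X)\rtimes \Sigma$ and then the induction hypothesis produces a minimal projection $p_o$ of a completely atomic corner of $B_0'\cap rNr$, and the usual spectral-gap-plus-Gaussian-deformation chain---$V_t\to\mathrm{Id}$ uniformly on $(B)_1$, propagated via Propositions \ref{conv1}--\ref{conv'} and Lemma \ref{decay0} to a corner of $L^\infty(X)\rtimes\Sigma$---combined with Lemmas \ref{decay6}--\ref{decay7} and Corollary \ref{fcenter}, forces the unbounded quasi-cocycle realising $NC_1$ to be bounded on $\Sigma$, and hence on $\G$ by \cite[Theorem 2.1]{CSU13}, the desired contradiction.

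For the base case ($n=1$, $\G\in\Cal C_{rss}$), the reduction above terminates at $B\preceq_M L^\infty(X)$ (or the symmetric statement for $C$), which, unlike in the group setting, does not immediately contradict diffuseness because the target $L^\infty(X)$ is itself diffuse. The remedy is to run the same Gaussian-deformation chain with $N=L^\infty(X)$: WLOG assume $C$ is non-amenable (otherwise both $B$ and $C$ are amenable and the commuting relative amenability forces, via the finite-index hypothesis, that $\G$ itself is amenable, a contradiction); spectral gap then yields $V_t\to\mathrm{Id}$ uniformly on $(B)_1$; Proposition \ref{masa} produces $B_0\subseteq rL^\infty(X)r$ together with a partial isometry $v$ conjugating a corner of $B$ onto $B_0$; and the propagation and decay lemmas transfer this uniform decay first to $(B_0)_1$. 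The last step---transferring uniform decay from a corner of the abelian $L^\infty(X)$ to a corner of the ambient $L^\infty(X)\rtimes\G$---must be performed directly from the original finite-index hypothesis $B\vee C\subseteq pMp$ using Pimsner--Popa bounds (Lemma \ref{ramen}(2)) and the partial isometry $v$, since the auxiliary finite-index condition $B_0\vee(B_0'\cap rNr)\subseteq rNr$ delivered by Proposition \ref{masa} is trivial when $N$ is abelian. Once the uniform decay is propagated to a corner of $M$, Lemmas \ref{decay6}--\ref{decay7} and Corollary \ref{fcenter} bound the quasi-cocycle on $\G$, contradicting $\G\in NC_1$.

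The main obstacle I anticipate is precisely this last propagation step in the base case: one cannot mimic the group-algebra argument verbatim because Proposition \ref{masa} loses all information about relative commutants when $N$ is abelian, so the local finite-index condition inside $N$ carries no content. Verifying that the global finite-index hypothesis on $B\vee C\subseteq pMp$ can substitute for the lost internal finite-index condition, and carefully tracking how the partial isometry $v$ and Pimsner--Popa intertwining interact with the Gaussian deformation $V_t$ to yield uniform decay on a whole corner of $L^\infty(X)\rtimes\G$, is the principal technical issue beyond the arguments already used in the proof of Theorem \ref{main1}.
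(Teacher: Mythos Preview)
The paper does not actually prove this theorem: it merely states it in Section~8.1 with the remark that ``our techniques can also be used'' and leaves the details to the reader. So there is no proof in the paper to compare your proposal against; all one can do is judge whether your adaptation of the proof of Theorem~\ref{main1} is sound.

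Your overall strategy---running the induction on $n$ with $M=L^\infty(X)\rtimes\G$ in place of $L(\La)$, using the comultiplication $\tilde\theta$, the $\Cal C_{rss}$-dichotomy, Proposition~\ref{masa}, and then the Gaussian deformation/spectral-gap chain exactly as in Theorem~\ref{main1}---is precisely what the paper has in mind, and the inductive step goes through verbatim once one notes (as you do) that the induction hypothesis must be stated without the ergodicity assumption on the restricted action $\Sigma\curvearrowright X$. That is a genuine, necessary observation which the paper does not flag.

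Your identification of the base case $n=1$ as the main difficulty is entirely correct, and this is a point the paper glosses over. In the group-algebra proof, $B\preceq_M L(\Omega)$ with $\Omega$ finite immediately contradicts diffuseness; in the crossed-product setting one only gets $B\preceq_M L^\infty(X)$, which is no contradiction, and moreover Proposition~\ref{masa} becomes vacuous there since $B_0'\cap rL^\infty(X)r = rL^\infty(X)r$. You are also right that propagating uniform convergence of $V_t$ from $(B)_1$ to a corner of $M$ cannot be done by the lemmas of Section~\ref{sec: deformations} alone: Lemma~\ref{ineq} and Proposition~\ref{conv1} only allow multiplication by \emph{fixed finite} sets, so uniform convergence on $(B)_1$ and $(C)_1$ separately does not yield it on $(B\vee C)_1$, and the Pimsner--Popa bound from $B\vee C\subseteq pMp$ only helps once one already controls $B\vee C$. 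Your proposed workaround---using the global finite-index hypothesis together with the intertwining partial isometry $v$---is therefore the right place to focus, but as you acknowledge, making this step precise is the principal missing ingredient, and it is not supplied by the paper either.
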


Finally, we remark that our techniques can be further developed to show that the groups in class $NC_1 \cap Quot (\mathcal C_{rss})$ also satisfy the unique prime decomposition phenomenon discovered by Ozawa and Popa in \cite{OP03} for bi-exact groups. Hence, our examples will add to the subsequent examples found in \cite{Pe06,CS11,SW12,Is14}. 

\begin{theorem}
For every $1\leq i\leq n$ let $\G_i \in NC_1 \cap Quot (\mathcal C_{rss})$ and for every $1\leq j\leq m$ let $P_j$ be a ${\rm II}_1$ factor. If we assume that $ L(\G_1)\bar\otimes L(\G_2)\bar\otimes \cdots \bar\otimes L(\G_n) \cong P_1\bar\otimes P_2\bar\otimes \cdots \bar\otimes P_m$ then $n\geq m$.  If we assume in addition that $P_j = L(\G'_j)$ for some $\G'_j \in NC_1 \cap Quot (\mathcal C_{rss})$ then we have $n=m$ and moreover there exist $\sigma$ a permutation of the set $\{1,\ldots ,n\}$ and positive scalars $t_i$ with $t_1t_2 \cdots t_n =1$  such that $L(\G_i)^{t_i}\cong P_{\sigma(i)}$, for all $1\leq i\leq n$.    
\end{theorem}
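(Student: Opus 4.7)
The proof follows the unique prime factorization (UPF) strategy of Ozawa and Popa from \cite{OP03}, with the solidity hypothesis there replaced by the (weaker) commuting-subalgebras dichotomy supplied by Theorem \ref{main1}. The central technical input I would first establish is a tensor-product version of Theorem \ref{main1}: setting $M = L(\G_1) \bar\otimes \cdots \bar\otimes L(\G_n)$ and letting $\hat{M}_i := \bar\otimes_{j \neq i}\, L(\G_j)$ denote the subfactor obtained by omitting the $i$-th tensor slot, the claim is that whenever $B, C \subseteq pMp$ are commuting diffuse subalgebras with $B \vee C \subseteq pMp$ of finite Pimsner--Popa index, then for some $i \in \{1,\ldots,n\}$ either $B \preceq_M \hat{M}_i$ or $C \preceq_M \hat{M}_i$.

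I would prove this tensor dichotomy by induction on $n$, with $n=1$ being exactly Theorem \ref{main1}. For the inductive step, write $M = \hat{M}_n \bar\otimes L(\G_n) = \hat{M}_n \rtimes \G_n$ with trivial action, and transplant the argument of Theorem \ref{main1} to this crossed product. Using Notation \ref{notationres} for $\G_n$, pick a quotient $\theta : \G_n \to \G^{(1)}_n$ with $\G^{(1)}_n \in \Cal C_{rss}$ and form the canonical $*$-homomorphism $\tilde{\theta} : M \to M \bar\otimes L(\G^{(1)}_n)$; then apply the dichotomy for $\Cal C_{rss}$ to $\tilde{\theta}(B)$. As in the proof of Theorem \ref{main1}, the co-amenability alternative forces $\G^{(1)}_n$ to be amenable, a contradiction, so $B$ (say) intertwines into a corner. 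Proposition \ref{masa} refines this intertwining into $\hat{M}_n \bar\otimes L(\ker \theta)$ while preserving the commuting / finite-index configuration. Finally, the $NC_1$ condition on $\G_n$ is brought to bear through the Gaussian deformation $V_t$ of Section \ref{sec: deformations} built from an unbounded quasi-cocycle on $\G_n$, extended by the zero cocycle on the remaining tensor slots: the spectral gap argument combined with Lemmas \ref{decay0}--\ref{decay7} pushes the intertwining to land in $\hat{M}_n$. Iterating the induction hypothesis on the residual slot completes the step.

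With the tensor dichotomy in hand, the UPF assertion is obtained by induction on $m$. Given $M = P_1 \bar\otimes \cdots \bar\otimes P_m$, apply the tensor dichotomy to $B := P_1$ and $C := P_2 \bar\otimes \cdots \bar\otimes P_m$; these commute and generate $M$, so after relabeling one obtains $P_1 \preceq_M \hat{M}_n$ for some $n$. A corner-promotion and relative-commutant computation in the spirit of \cite{OP03}, using the factoriality of $P_1$ and $\hat{M}_n$, then upgrades this intertwining to the existence of $t_1 > 0$ and a unitary $u \in M$ such that $u P_1^{t_1} u^* = L(\G_n)$ and $u (P_2 \bar\otimes \cdots \bar\otimes P_m)^{1/t_1} u^* = \hat{M}_n$. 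Applying the induction hypothesis to $P_2 \bar\otimes \cdots \bar\otimes P_m \cong \hat{M}_n$ yields $m \leq n$; under the additional assumption that each $P_j = L(\G'_j)$ with $\G'_j \in NC_1 \cap Quot(\Cal C_{rss})$, the symmetric argument (applying the tensor dichotomy on the $P$-side) forces $n = m$ together with a permutation $\sigma$ and positive scalars $t_i$ satisfying $t_1 \cdots t_n = 1$.

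The principal obstacle will be the inductive step in the tensor dichotomy. The delicate point is that although each $\G_i$ lies in $NC_1$, the product $\G_1 \times \cdots \times \G_n$ never does (products are excluded by Proposition \ref{NCsimple}(e)), so the Gaussian/quasi-cocycle machinery of Section \ref{sec: deformations} cannot be applied to $L(\G_1 \times \cdots \times \G_n) = M$ as a whole. The remedy is to use, at each inductive stage, the Gaussian deformation associated to the quasi-cocycle on the single factor $\G_n$, extended by the trivial representation on the other slots; the resulting representation remains mixing when restricted to the infinite subgroup $\G_n$ and weakly-$\ell^2$ after restriction to the infinite normal subgroups produced by the successive intertwinings. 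Verifying that the spectral gap estimate (which requires the non-amenable commutant $C$ to supply a gap inside the appropriately restricted deformation) and the uniform-decay inequalities of Section \ref{sec: deformations} survive these factor-by-factor reductions is the main technical burden, and at each slot mirrors the proof of Theorem \ref{main1}.
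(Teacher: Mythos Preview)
The paper supplies no proof of this theorem: it is recorded in the ``Further results'' subsection with only the remark that ``our techniques can be further developed'' along the lines of Ozawa--Popa \cite{OP03} (see also \cite{Pe06,CS11,SW12,Is14}). Your outline is therefore exactly the elaboration the paper invites, and its architecture---a tensor-product strengthening of Theorem~\ref{main1} fed into the \cite{OP03} induction---is the intended route.

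Two points in your sketch need correction. First, the tensor dichotomy as you state it (``for \emph{some} $i$'') is too weak to run the \cite{OP03} machinery; what is needed, and what your own slot-by-slot proof actually delivers, is the ``for \emph{each} $i$'' version. Second, your upgrade step has $L(\G_n)$ and $\hat M_n$ interchanged and asserts equalities where only inclusions hold: from $P_1\preceq_M\hat M_n$ the factoriality promotion yields $uP_1^{t_1}u^*\subseteq\hat M_n$ (and hence $L(\G_n)\subseteq u(P_2\bar\otimes\cdots\bar\otimes P_m)^{1/t_1}u^*$ by passing to commutants), not $uP_1^{t_1}u^*=L(\G_n)$. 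A single intertwining never produces the identification $P_2\bar\otimes\cdots\bar\otimes P_m\cong\hat M_n$ that your induction on $m$ requires; that only emerges after applying the dichotomy in every slot and carrying out the partition/simultaneous-intertwining argument of \cite{OP03}. With these amendments the plan goes through, and the technical cautions you raise in your final paragraph about spectral gap relative to $\hat M_n$ are well placed.
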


\subsection*{Acknowledgements}

The first and the third author are very grateful to Prof.\ Frederick Goodman for many illuminating discussions on braid groups and beyond.  The first author is grateful to Yago Antolin for his comments on right-angled Artin groups in connection with the content of this paper.

The authors are particularly grateful to the two anonymous referees for their numerous corrections and suggestions which greatly improved the exposition and the overall mathematical quality of the paper.


\begin{thebibliography}{99}
\bibitem[BHV05]{BHV05} B. Bekka P. de la Harpe, and A. Valette, \textit{Kazhdan property (T)}, New Mathematical Monographs, vol. 11, CUP, Cambridge 2008. xiv+472 pages.
\bibitem[BV13]{BV13}M. Berbec and S. Vaes, \textit{$W^*$-superrigidity for group von Neumann algebras of left-right wreath products}, Proc. Lond. Math. Soc. \textbf{108} (2014), 1116--1152.
\bibitem[BBF13]{BBF13} M. Bestvina, K. Bromberg, and K. Fujiwara, \textit{Bounded cohomology via quasi-trees}, Preprint June 2013, ArXiv:1306.1542(v1).
\bibitem[BBM10]{BBM10} M. Bestvina, K.-U. Bux, and D. Margalit, \textit{The dimension of the Torelli group}, J. Amer. Math. Soc. \textbf{23} (2010), 61--105.
\bibitem[B69a]{B69a} J. S. Birman, \textit{On braid groups}, Comm. Pure App. Math. \textbf{22} (1969), 41--72.
\bibitem[B69b]{B69b} J. S. Birman, \textit{Mapping class groups and their relationship with braid groups}, Comm. Pure App. Math. \textbf{22} (1969), 213--238.
\bibitem[B74]{B74} J. S. Birman, \textit{Braids, links, and mapping class groups}, Ann. of Math. Stud., 82, Princeton Univ.\ Press, Princeton, N.J., 1974.
\bibitem[BH73]{BH73} J. S. Birman and H. M. Hilden, \textit{On Isotopies of Homeomorphisms of Riemann Surfaces}, Ann. of Math. (2) \textbf{97} (1973), 424--439.
\bibitem[Bo12]{Bo12}R, Boutonnet, \textit{$W^*$-superrigidity of mixing Gaussian actions of rigid groups}, Adv. Math., \textbf{244} (2013), 69-9-0. 
\bibitem[BC14]{BC14} R, Boutonnet and A. Carderi, \textit{Maximal amenable von Neumann subalgebras arising from maximal amenable subgroups}, to appear in Geom.  Funct. Anal., ArXiv:1411.4093.
\bibitem[BHR12]{BHR12}R. Boutonnet, C. Houdayer, and S. Raum, \textit{Amalgamated free product type III factors with at most one Cartan subalgebra}.  Compos. Math. \textbf{150} (2014), 143--174.
\bibitem[BO08]{BO08} N. P. Brown and N. Ozawa, \textit{$\mathrm{C}^\ast$-algebras and finite-dimensional approximations}, Graduate Studies in Mathematics, vol. 88, AMS, Providence, RI.
\bibitem[CI08]{CI08}I. Chifan and A. Ioana, \textit{Ergodic subequivalence relations induced by a Bernoulli action}, Geom. Funct. Anal. \textbf{20} (2010), 53--67.
\bibitem[CH08]{CH08} I. Chifan and C. Houdayer, \textit{Bass-Serre rigidity results in von Neumann algebras}, Duke Math. J. \textbf{ 153} (2010), 23--54.
\bibitem [CK14]{CS14}I. Chifan and Y. Kida, \textit{$OE$ and $W^*$ superrigidity results for actions by surface braid groups}, Preprint  February 2014, ArXiv:1502.02391.
\bibitem[CS11]{CS11}I. Chifan and T. Sinclair, \textit{On the structural theory of II$_1$ factors of negatively curved groups},  Ann. Sci. \'{E}c. Norm. Sup. \textbf{46} (2013), no. 1, 1--34. 
\bibitem [CP10]{CP10} I. Chifan and J. Peterson, \textit{Some unique group measure space decomposition results},   Duke Math. J. \textbf{162} (2013), 1923--1966.
\bibitem[CIK13]{CIK13}I. Chifan, A. Ioana, and Y. Kida, \textit{$W^*$-superrigidity for arbitrary actions of central quotients of braid groups}, Math. Ann. \textbf {361} (2015), 563--582.
\bibitem[CSU13]{CSU13}I. Chifan, T. Sinclair, and B. Udrea, \textit{Inner amenability for groups and central sequences in factors}, to appear in Ergodic Theory Dynam. Systems, ArXiv: 1307.5002.
\bibitem[CSU11]{CSU11}I. Chifan, T. Sinclair, and B. Udrea, \textit{On the structural theory of II$_1$ factors of negatively curved groups, II: Actions by product groups}, Adv. Math. \textbf{245} (2013), 208--236.
\bibitem[Co75]{Co75} A. Connes, \textit{Classification of injective factors}, Ann. of Math. (2) \textbf{104}  (1976), 73--115.
\bibitem[DY08]{DY08} F. Dahmani and A. Yaman, \textit{Symbolic dynamics and relatively hyperbolic groups}, Groups Geom. Dyn. {\bf 2} (2008), no.2, 165--184.
\bibitem[DGO11]{DGO11} F. Dahmani, V. Guirardel, D. Osin, \textit{Hyperbolically embedded subgroups and rotating families in groups acting on hyperbolic spaces}, Preprint November 2011, ArXiv:1111.7048.
%\bibitem[dlHK14]{dlHK14} Pierre de la Harpe and D. Kotschick \textit{Presentability of products for some classes of groups}, Preprint (2014), ArXiv:1409.7268.
\bibitem[DL14]{DL14} S. Deprez and K. Li, \textit{Permanence properties of property A and coarse embeddability for locally compact groups}, Preprint  March 2014, ArXiv:1403.7111.    
\bibitem[DD89]{DD89} W. Dicks and M. J. Dunwoody, \textit{Groups acting on graphs}, Cambridge Stud. Adv. Math. \textbf{17}, Cambridge University Press, Cambridge, 1989.
\bibitem[FV62]{FV62} E. Fadell and J. Van Boskirk, \textit{The braid groups of $E^2$ and $S^2$}, Duke Math. J. \textbf{29} (1962), 243--258.
\bibitem[FaNe62]{FaNe62} E. Fadell and L. Neuwirth, \textit{Configuration spaces}, Math. Scand. \textbf{10} (1962), 111--118.
\bibitem [FM11]{FM11} B. Farb and D. Margalit, \textit{A primer on mapping class groups},  Princeton Math. Ser., October 2011.
\bibitem[Fi11]{Fi11}P. Fima, \textit{A note on the von Neumann algebra of a Baumslag-Solitar group}, C. R. Acad. Sci. Paris, Ser. I \textbf{349} (2011), 25--27. 
\bibitem[FV10]{FV10} P. Fima and S. Vaes, \textit{HNN extensions and unique group measure space decomposition of II$_1$ factors}, Trans. Amer. Math. Soc. \textbf{364} (2012), 2601--2617.
%\bibitem[FoNe62]{FoNe62} R. H. Fox and L. Neuwirth, \textit{The braid groups}, Math. Scand. \textbf{10} (1962), 119--126. 
\bibitem[GaJu07]{GaJu07} M. Gao and M. Junge, \textit{Examples of prime von Neumann algebras}, Int. Math. Res. Not., Vol. 2007, Article ID rnm 042, 34 pages. 
 \bibitem[G98]{G98} L. Ge, \textit{Applications of free entropy to finite von Neumann algebras. II}, Ann. of Math. (2) \textbf{147} (1998), no. 1, 143--157.
%\bibitem[GoGu04]{GoGu04} D.L. Gon\c{c}alves and J. Guaschi, \textit{The roots of the full twist for surface braid groups}, Math. Proc. Camb. Phil. Soc. \textbf {137} (2004), 307--320.  
\bibitem[Ha]{Ha} U. Hamenst\"adt, \textit{Bounded cohomology and isometry groups of hyperbolic spaces}, J. Eur. Math. Soc. \textbf{10} (2008), 315--349.
\bibitem[HPV10]{HPV10}C. Houdayer, S. Popa, and S. Vaes, \textit{A class of groups for which every action is $W^*$-superrigid}, Groups Geom. Dyn. \textbf {7} (2013), 577--590.
\bibitem[HV12]{HV12}C. Houdayer and S. Vaes, \textit{Type III factors with unique Cartan decomposition}, J. Math. Pures Appl. \textbf{100} (2013), 564--590.
\bibitem[HO11]{HO11}  M. Hull, D. Osin, \textit{Induced quasi-cocycles on groups with hyperbolically embedded subgroups}, Alg.  Geom. Topol. \textbf{13} (2013),  no. 5, 2635--2665.
\bibitem[Io10]{Io10} A. Ioana, \textit{$W^*$-superrigidity for Bernoulli actions of property (T) groups}, J. Amer. Math. Soc. \textbf{24} (2011), 1175--1226.
\bibitem[Io11]{Io11} A. Ioana, \textit{Uniqueness of the group measure space decomposition for Popa's $\mathcal H\mathcal T$ factors}, Geom. Funct. Anal.  \textbf{22} (2012), 699--732. 
\bibitem[Io12]{Io12} A. Ioana, \textit{Cartan subalgebras of amalgamated free product II$_1$ factors}, to appear in Ann. Sci. \'{E}c. Norm. Sup.,  ArXiv: 1207.0054.
\bibitem[IPP05]{IPP05}A. Ioana, J. Peterson, and S. Popa, \textit{Amalgamated free products of weakly rigid factors and calculation of their symmetry groups}, Acta Math. \textbf{200} (2008), 85--153.
\bibitem[IPV10]{IPV10} A. Ioana, S. Popa, and S. Vaes, \textit{A class of superrigid group von Neumann algebras}, Ann. of Math. \textbf{178} (2013), 231--286..
\bibitem[Is12]{Is12}Y. Isono, \textit{Examples of factors which have no Cartan subalgebras}, to appear in Trans. Amer. Math. Soc., ArXiv1209.1728.
\bibitem[Is14]{Is14}Y. Isono, \textit{Some prime factorization results for free quantum group factors}, to appear in J. Reine Angew. Math., ArXiv:1401.6923.
\bibitem[Iv02]{Iv02} N. V. Ivanov, \textit{Mapping class groups}, Handbook of geometric topology, 523--633, North-Holland, Amsterdam, 2002.
\bibitem[Jo79]{Jo79} D. Johnson, \textit{Homeomorphisms of a surface which act trivially on homology}, Proc. Amer. Math. Soc. \textbf{75} (1979), 119--125.
\bibitem[Jo80]{Jo80} D. Johnson, \textit{Abelian quotients of the mapping class group $\mathcal{I}_g$}, Math. Ann. \textbf{249} (1980), 225--242.
\bibitem[Jo81]{Jo81} V. F. R. Jones, \textit{Index for subfactors}, Invent. Math. \textbf{72} (1983), 1--25.
\bibitem[Ju07]{Ju07} K. Jung, \textit{Strongly $1$-bounded von Neumann algebras},  Geom. Funct. Anal. \textbf{17} (2007), no. 4, 1180--1200. 
\bibitem[Ki08]{Ki08} Y. Kida, \textit{Orbit equivalence rigidity for ergodic actions of the mapping class group}, Geom. Dedicata \textbf{131}  (2008), 99--109.
\bibitem[Ki09]{Ki09} Y. Kida, \textit{Automorphisms of the Torelli complex and the complex of separating curves}, J. Math. Soc. Japan \textbf{63} (2011), 363--417.
\bibitem[KW99]{KW99} E. Kirchberg and S. Wassermann, \textit{Exact groups and continuous bundles of $C^*$-algebras}, Math. Ann. \textbf{315} (2) (1999), 169--203.
\bibitem[KK50]{KK50} Krasner M. and Kaloujnine L. \textit{ Produit complet de groupes de permutations et probleme d'extension de groupes, I, II, III}, Acta Univ. Szeged, vol. 13 (1950), 208--230; 14 (1951), 39--66, 69--82. 
\bibitem[Me84]{Me84} D. Meier, \textit{On polyfree groups}, Illinois J. Math. {\bf 28} (1984), no.3, 437--443.
\bibitem[Me92]{Me92} G. Mess, \textit{The Torelli groups for genus $2$ and $3$ surfaces}, Topology \textbf{31} (1992), no.4, 775--790.
\bibitem[MO13]{MO13} A. Minasyan and D. Osin, \textit{Acylindrical hyperbolicity of groups acting on trees}, to appear in Math. Ann., ArXiv:1310.6289.
\bibitem[Mi01]{Mi01} I. Mineyev, \textit{Straightening and bounded cohomology of hyperbolic groups}, Geom. Funct. Anal. \textbf{11} (2001), 807--839.
\bibitem[MY09]{MY09} I. Mineyev and A. Yaman, \textit{Relative hyperbolicity and bounded cohomology}, Preprint, University of Urbana-Champaign, 2006. 
\bibitem[MMS03]{MMS03} I. Mineyev, N. Monad, and Y. Shalom, \textit{Ideal bicombings for hyperbolic groups and applications}, Topology \textbf{43} (2004), No. 6, 1319--1344. 
\bibitem[MS04]{MS04} N. Monod and Y. Shalom, \textit{Orbit equivalence rigidity and bounded cohomology}, Ann. of Math. \textbf{164} (2006), No. 3, 825--878.
\bibitem[Neu54]{Neu54} B. H. Neumann, \textit {Groups covered by permutable sets}, J. Lond. Math. Soc. (2) \textbf{29} (1954), No. 2, 236--248.
 \bibitem[Os13]{Os13} D. Osin, \textit{ Acylindrically hyperbolic groups}, to appear in Trans. Amer. Math. Soc., ArXiv:1304.1246.
\bibitem[Os06]{Os06} D. Osin, \textit{Peripheral fillings of relatively hyperbolic groups}, Invent. Math. \textbf{167} (2007), no. 2, 295--326. 
\bibitem[Oz03]{Oz03} N. Ozawa, \textit{Solid von Neumann algebras}, Acta Math. \textbf{192} (2004), 111--117.
\bibitem[Oz05]{Oz05}N. Ozawa, \textit{A Kurosh type theorem for type II$_1$ factors}, Int. Math. Res. Not. Volume 2006, Article ID97560 
\bibitem[OP03]{OP03}N. Ozawa and S. Popa, \textit{Some prime factorization results for type II$_1$ factors}, Invent. Math. \textbf{156} (2004), 223--234.
\bibitem[OP08]{OP08}N. Ozawa and S. Popa, \textit{On a class of II$_1$ factors with at most one Cartan subalgebra. II}, Amer. J. Math. \textbf {132} (2010), 841--866.
\bibitem [OP07]{OP07} N. Ozawa and S. Popa, \textit{On a class of II$_1$ factors with at most one Cartan subalgebra}, Ann. of Math. \textbf{172}  (2010), 713--749.
\bibitem[PR99]{PR99} L. Paris and D. Rolfsen, \textit{Geometric subgroups of surface braid groups}, Ann. Inst. Fourier (Grenoble) \textbf{49} (1999), 417--472.
\bibitem[Pe06]{Pe06} J. Peterson, \textit{$L^2$-rigidity in von Neumann algebras}, Invent. Math. \textbf{175} (2009), no. 2, 417--433.
\bibitem[Pe09]{Pe09} J. Peterson, \textit{Examples of group actions which are virtually $W*$-superrigid}, Preprint February 2009, ArXiv:1002.1745.
\bibitem[PS09]{PS09} J. Peterson and T. Sinclair, \textit{On cocycle superrigidity for Gaussian actions}, Ergodic Theory Dynam. Systems \textbf{32} (2012), no. 1, 249--272.
\bibitem [PP86]{PP86} M. Pimsner and S. Popa, \textit{Entropy and index for subfactors},  Ann. Sci. \'{E}c. Norm. Sup. \textbf{ 19} (1986), 57--106.
\bibitem[Po81]{Po81}S. Popa, \textit{Orthogonal pairs of $*$-subalgebras in finite von Neumann algebras}, J. Oper. Theory \textbf{ 9} (1983), 253--268.
\bibitem[Po86]{Po86} S. Popa, \textit{Correspondences}, Preprint INCREST, 1986.
\bibitem[Po01]{Po01} S. Popa, \textit{On a class of type II$_1$ factors with Betti numbers invariants}, Ann. of Math. \textbf{163} (2006), 809--899.
\bibitem[Po03]{Po03} S. Popa, \textit{Strong Rigidity of II$_1$ Factors Arising from Malleable Actions of $w$-Rigid Groups I}, Invent. Math. \textbf{165}  (2006), 369--408.
 \bibitem[Po04]{Po04} S. Popa, \textit{Strong Rigidity of II$_1$ Factors Arising from Malleable Actions of $w$-Rigid Groups II}, Invent. Math. \textbf{165} (2006), 409--453.
\bibitem[Po08]{Po08} S. Popa, \textit{On the superrigidity of malleable action with spectal gap}, J. Amer. Math. Soc. \textbf {21} (2008), 981--1000.
\bibitem[Po06]{Po06}S. Popa, \textit{On Ozawa's property for free group factors}, Int. Math. Res. Not. Vol. 2007 : article ID rnm036, 10 pages.
\bibitem[Po06b]{Po06b} S. Popa, \textit{Deformation and rigidity for group actions and von Neumann algebras}, Proceedings of the International Congress of Mathematicians, (Madrid 2006), Volume I, EMS Publishing House, Z\"urich 2006/2007, pp. 445--479.
\bibitem[PV09]{PV09} S. Popa and S. Vaes, \textit{ Group measure space decomposition of II$_1$ factors and $W^*$-superrigidity}, Invent. Math. \textbf{182}  (2010), 371--417.
\bibitem[PV11]{PV11}S. Popa and S. Vaes, \textit{Unique Cartan decomposition for II$_1$ factors arising from arbitrary actions of free groups}, Acta Math. \textbf{212} (2014), no. 1, 141--198.  
\bibitem[PV12]{PV12} S. Popa and S. Vaes, \textit{Unique Cartan decomposition for II$_1$ factors arising from arbitrary actions of hyperbolic groups}, J. Reine Angew. Math. \textbf{ 694} (2014), 215--239. 
\bibitem[Pow78]{Pow78} J. Powell, \textit{Two theorems on the mapping class group of surfaces}, Proc. Amer. Math. Soc. \textbf{68} (1978), 347--350.
\bibitem[Shl04]{Shl04}D. Shlyakhtenko, \textit{Some estimates for non-microstates free entropy dimension, with applications to $q$-semicircular families}, Int. Math. Res. Not. \textbf{51} (2004), 2757--2772.
\bibitem[Shl00]{Shl00}D. Shlyakhtenko, \textit{Prime type $III$ factors}, Proc. Nat. Acad. Sci., \textbf{97} (2000), 12439--12441.
\bibitem[Si10]{Si10} T. Sinclair, \textit{Strong solidity of group factors from lattices in $SO(n,1)$ and $SU(n,1)$}, J. Funct. Anal. \textbf{260} (2011), no. 11, 3209--3221.
\bibitem[SW12]{SW12} O. Sizemore and A. Winchester, \textit{A unique prime decomposition result for wreath product factors},  Pacific J. Math. \textbf{265} (2013), no. 1, 221--232.
\bibitem[Tho09]{Tho09} A. Thom, \textit{Low degree bounded cohomology and $L^2$-invariants for negatively curved groups}, Groups Geom. Dyn. \textbf{3} (2009), no. 2, 343--358.
\bibitem[Va07]{Va07} S. Vaes, \textit{Explicit computations of all finite index bimodules for a family of II$_1$ factors}, Ann. Sci. \'{E}c. Norm. Sup. \textbf{41} (2008), 743--788.
\bibitem[Va10]{Va10} S Vaes, \textit{One-cohomology and the uniqueness of the group measure space decomposition of a II$_1$ factor}, Math. Ann. \textbf{355} (2013), 661--696. 
\bibitem[Va13]{Va13} S. Vaes, \textit{Normalizers inside amalgamated free products von Neumann algebras}, Publ. Res. Inst. Math. Sci. \textbf{50} (2014), 695--721. 
\bibitem[VV14]{VV14} S. Vaes and P. Verraedt, \textit{Classification of type III Bernoulli crossed products}, Adv. Math. \textbf{281} (2015), 296--332. 
 \bibitem[V94]{V94}D-V. Voiculescu, \textit{The analogues of entropy and of Fisher's information measure in free probability theory. II}, Invent. Math. \textbf{118} (1994), no. 3, 411--440. 
 \bibitem[V96]{V96}D-V. Voiculescu,\textit{The analogues of entropy and of Fisher's information measure in free probability theory: the absence of Cartan subalgebras}, Geom. Funct. Anal. \textbf{6} (1996), no. 1,172--199.
\bibitem[VDN92]{VDN92}D-V. Voiculescu, K.J. Dykema, and A. Nica, \textit{Free random variables}, CRM Monograph Series 1. American Mathematical Society, Providence, RI, 1992.
\end{thebibliography}
\end{document}